\documentclass[11pt]{amsart}

\usepackage{graphicx}
\usepackage[english]{babel}
\usepackage[T1]{fontenc}
\usepackage[latin1]{inputenc}
\usepackage{amsfonts}
\usepackage{amssymb}
\usepackage{amsthm}
\usepackage{amsmath}
\usepackage{tikz}
\usepackage{float}
\usepackage{enumerate}
\usepackage{accents}
\usepackage{mathtools}
\usepackage{mathrsfs}
\usepackage{comment}
\usepackage{afterpage}
\usepackage{bbm}
\usepackage{dsfont}
\usepackage{stmaryrd}
\usepackage{hyperref}
\usepackage{mleftright}
\usepackage{subfig}
\usepackage{soul}
\usepackage{tikz-cd} 
\usepackage[foot]{amsaddr}
\usepackage{fullpage}

\theoremstyle{plain}
\newtheorem{thm}{Theorem}[section]
\newtheorem{lem}[thm]{Lemma}
\newtheorem{prop}[thm]{Proposition}
\newtheorem{cor}[thm]{Corollary}
\newtheorem*{thm*}{Theorem}
\newtheorem*{prop*}{Proposition}
\newtheorem*{cor*}{Corollary}
\newtheorem{thmintro}{Theorem}

\newtheorem{corintro}[thmintro]{Corollary}

\theoremstyle{definition}
\newtheorem{defn}[thm]{Definition}
\newtheorem{ex}[thm]{Example}
\newtheorem{rmk}[thm]{Remark}
\newtheorem*{rmk*}{Remark}

\newtheorem*{quest*}{Question}

\newtheorem*{defn*}{Definition}

\newcommand{\acts}{\curvearrowright}
\newcommand{\ra}{\rightarrow}
\newcommand{\Ra}{\Rightarrow}

\newcommand{\cu}{\subseteq}

\newcommand{\wh}{\widehat}
\newcommand{\x}{\times}
\renewcommand{\o}{\circ}
\newcommand{\id}{\mathrm{id}}

\newcommand{\mc}{\mathcal}
\newcommand{\mf}{\mathfrak}
\newcommand{\mscr}{\mathscr}

\newcommand{\R}{\mathbb{R}}
\newcommand{\Z}{\mathbb{Z}}
\newcommand{\N}{\mathbb{N}}
\renewcommand{\H}{\mathbb{H}}
\newcommand{\s}{\sigma}

\newcommand{\Om}{\Omega}

\newcommand{\CAT}{{\rm CAT(0)}}

\DeclareMathOperator{\isom}{Isom}

\DeclareMathOperator{\aut}{Aut}

\DeclareMathOperator{\rk}{rk}

\newcommand{\Min}{\mathrm{Min}}
\newcommand{\res}{\mathrm{res}}
\newcommand{\I}{\mscr{I}}

\allowdisplaybreaks

\begin{document}

\title{Convex cores for actions on finite-rank median algebras} 
\author[E. Fioravanti]{Elia Fioravanti}\address{Max Planck Institute for Mathematics, Bonn, Germany}\email{fioravanti@mpim-bonn.mpg.de} 
%\address{Universit\"at Bonn, Bonn, Germany}\email{ fioravan@math.uni-bonn.de} 

\begin{abstract}
We show that every action of a finitely generated group on a finite-rank median algebra admits a nonempty ``convex core'', even when no metric or topology is given. We then use this to deduce an analogue of the flat torus theorem for actions on connected finite-rank median spaces. We also prove that isometries of connected finite-rank median spaces are either elliptic or loxodromic. 
\end{abstract}

\maketitle

\section{Introduction.}

A metric space $X$ is \emph{median} if, for all $x_1,x_2,x_3\in X$, there exists a unique $m=m(x_1,x_2,x_3)\in X$ with the property that
\begin{equation}\label{median space defn}\tag{$\ast$} d(x_i,x_j)=d(x_i,m)+d(m,x_j)\end{equation}
for all $1\leq i<j\leq 3$. We refer to median metric spaces simply as \emph{median spaces}. The \emph{rank} of a connected median space is the supremum of the topological dimensions of its compact subsets. 

The simplest examples of finite-rank median spaces are provided by $\R$--trees and finite-dimensional $\CAT$ cube complexes with the $\ell^1$ metric \cite{Chepoi}. The Guirardel core of a pair of actions on $\R$--trees is also a median space of rank $\leq 2$ \cite{Guirardel-core}. On the other hand, for every non-discrete measure space $(\Om,\mu)$, the Banach space $L^1(\Om,\mu)$ is an infinite-rank median space \cite{CDH}.

Most importantly, finite-rank median spaces arise as ultralimits of sequences of $\CAT$ cube complexes of uniformly bounded dimension, or, in fact, any sequence of spaces that can be suitably approximated by such cube complexes. Thus, the asymptotic cones of any \emph{coarse median group} can be given a natural structure of finite-rank median space \cite{Bow-cm,Zeidler}. Such groups include all cocompactly cubulated groups and mapping class groups of compact surfaces \cite{Bow-cm,HHS1}. 

There is a sharp divide between the properties of median spaces of \emph{finite rank}, which closely track those of $\CAT$ cube complexes, and the properties of \emph{infinite-rank} median spaces, such as $L^1([0,1])$. For this reason, we restrict our attention to finite-rank spaces in this work. 

\medskip
Our first goal is to provide a proof of the following two facts (Corollaries~\ref{main median semisimple} and~\ref{flattorus cor}), which follow from our more general results on group actions on median algebras. 

To begin with, isometries of finite-rank median spaces are \emph{semisimple}:

\begin{corintro}\label{main median semisimple}
Let $X$ be a connected, finite-rank median space. For every $g\in \isom X$:
\begin{enumerate}
\item the translation length of $g$ is realised by some point of $X$;
\item if $X$ is geodesic and $g$ does not fix a point, then $g$ admits an \emph{axis}: a bi-infinite $\langle g\rangle$--invariant geodesic along which $g$ translates nontrivially. 
\end{enumerate}
\end{corintro}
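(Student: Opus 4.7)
The plan is to apply the convex core theorem to the cyclic subgroup $\langle g \rangle \cong \Z$ acting on $X$, and then extract metric consequences for the $g$-invariant convex subset it produces. Viewing $X$ as a finite-rank median algebra, the main theorem furnishes a nonempty $g$-invariant convex subset $C \subseteq X$, on which the dynamics of $g$ is constrained enough to imply semisimplicity at the metric level.

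For part (1), I would show that the displacement function $\phi(x) := d(x, gx)$ is constant on $C$, with common value equal to $\ell(g) := \inf_{y \in X} \phi(y)$. The natural route is to describe $C$ in terms of the action of $g$ on halfspaces: points of $C$ see only the halfspaces that $g$ \emph{skewers} (those $\mathfrak{h}$ with $g\mathfrak{h} \subsetneq \mathfrak{h}$ or $\mathfrak{h} \subsetneq g\mathfrak{h}$), while the non-skewered halfspaces contribute $0$ to $d(x,gx)$ when $x \in C$. Combined with the measured-wall formula $d(x,gx) = \sum \text{(contribution of skewered halfspaces)}$ (which is independent of $x \in C$), this gives $\phi \equiv \ell(g)$ on $C$; since $C \ne \emptyset$, the translation length is realised.

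For part (2), assume $X$ is geodesic and $g$ has no fixed point; then $\ell(g) > 0$, since otherwise Step 2 places a fixed point in $C$. Pick $x_0 \in C$ and a geodesic $\gamma \colon [0, \ell(g)] \to X$ from $x_0$ to $g x_0$. Because $\phi$ is convex along geodesics in a connected finite-rank median space and equals $\ell(g)$ at both endpoints of $\gamma$, the whole segment $\gamma$ lies in the displacement-minimising locus, hence in $C$. The concatenation $A := \bigcup_{n \in \Z} g^n \gamma$ is therefore a $\langle g \rangle$-invariant bi-infinite path in $C$ along which $g$ translates by $\ell(g) > 0$. To check that $A$ is a genuine geodesic it suffices to verify no backtracking at each junction, i.e.\ $d(g^{-1} x_0, g x_0) = 2\ell(g)$; this should follow from the constancy of $\phi$ on $C$ together with the expected product decomposition of $C$ for a loxodromic isometry, which provides an $\R$-direction inside $C$ along which $g$ acts by translation.

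The hard part is Step 2: linking the algebraically defined convex core, which is oblivious to the metric, to the set of metric displacement-minimisers. Everything downstream (the axis concatenation, positivity of $\ell(g)$, and convexity of $\phi$) is comparatively routine. The delicate point is establishing, at the level of halfspaces, that the core collects exactly the data that determines $d(x,gx)$ in the measured-wall sense; this is where the finite-rank hypothesis is essential, as it prevents pathologies of the sort exhibited by $L^1([0,1])$.
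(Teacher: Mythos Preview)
Your overall strategy of passing to an essential $\langle g\rangle$--invariant convex subset $C$ matches the paper's, but Step~2 contains a real error: the displacement $\phi(x)=d(x,gx)$ is \emph{not} constant on $C$ in general. Take $X=(\R^2,d_{\ell^1})$ and $g(x,y)=(y+1,x)$. This is an isometry without wall inversions, and since $g^2$ is translation by $(1,1)$, every halfspace lies in $\mc{H}_1(g)$; thus the essential core is all of $\R^2$. Yet $\phi(x,y)=|x-y-1|+|x-y|$ is unbounded. The trouble is that ``skewered by some power of $g$'' (which is what $\mc{H}_1(g)$ actually means) is much weaker than your condition ``$g\mf{h}\subsetneq\mf{h}$ or $\mf{h}\subsetneq g\mf{h}$''; the latter holds for all $\mf{h}\in\mscr{H}_{\overline{\mc{C}}(g)}(M)\cap\mc{H}_1(g)$ only when $g$ acts non-transversely, and in that special case the paper does prove $\Min(g)=\overline{\mc{C}}(g)$ (Proposition~\ref{non-transverse prop}).

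What the paper does instead on the essential core $C$ is to construct two $g$--fixed points $\xi^\pm$ in the zero-completion $\overline C$ with $C\subseteq I(\xi^-,\xi^+)$, and then iterate $x_{i+1}=m(x_i,gx_i,\xi^+)$ from an arbitrary $x_0$; after at most $r-1$ steps this produces a \emph{single} point $x$ with the sets $\mscr{W}(g^nx\mid g^{n+1}x)$ pairwise disjoint (Lemma~\ref{ss lemma}). That such an $x$ realises $\ell(g)$ then follows from an orbit-counting argument in the measured wall space: each $\langle g\rangle$--orbit of walls meets $\mscr{W}(y|gy)$ at least once but $\mscr{W}(x|gx)$ at most once (Proposition~\ref{core of g when metric}). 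For part~(2), once $x\in\Min(g)$, the pairwise disjointness of these wall-sets gives $d(g^mx,g^nx)=|n-m|\cdot\ell(g)$ directly, so the concatenated geodesic segments $g^n\alpha$ form a bi-infinite geodesic; no convexity of $\phi$ or product decomposition is needed.
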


When $X$ is a $\CAT$ cube complex, Corollary~\ref{main median semisimple} is due to Haglund \cite{Haglund} and also holds when $X$ is infinite-dimensional\footnote{Haglund's theorem is normally stated with the additional assumption that $g$ act \emph{stably without inversions}. This is only necessary if we want an axis to intersect the $0$--skeleton of the cube complex. In our setting, the fact that the median space is connected guarantees that there cannot be any inversions (see Remark~\ref{connected inversions}).}. In median spaces, however, the finite-rank assumption is essential for Corollary~\ref{main median semisimple} to hold (see for instance Example~\ref{L^1 not semisimple ex} below). 

Haglund's proof is combinatorial in nature and does not carry over to the general median setting. We will develop an alternative approach, based on the study of ``\emph{convex cores}'' for general actions $G\acts X$. These are certain $G$--invariant convex subsets of $X$ naturally attached to the $G$--action.

Our approach also easily yields the following version of the flat torus theorem. Similar results when $X$ is a $\CAT$ cube complex were obtained in \cite{WW,Woodhouse-flattorus,Genevois-flattorus}.
 
\begin{corintro}\label{flattorus cor}
Let $A$ be a finitely generated, virtually abelian group. Let $A\acts X$ be an isometric action on a connected, finite-rank median space. Then $A$ stabilises a nonempty convex subset $C\cu X$ isometric to a subset of $(\R^n,d_{\ell^1})$ with $n\leq\rk X$.
\end{corintro}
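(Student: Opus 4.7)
The plan is to reduce to the case $A\cong\Z^n$ and induct on $n$, using Corollary~\ref{main median semisimple} together with a product decomposition of the minimum set of a loxodromic isometry provided by the convex-core machinery.

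First I would reduce to the case that $A$ is free abelian. Given a finite-index subgroup $A_0\leq A$ with $A_0\cong\Z^n$, a solution $C_0\cu X$ for $A_0$ can be upgraded to an $A$--invariant solution by averaging over the finite quotient $A/A_0$: concretely, one looks for an $A$--invariant flat inside the canonical convex core of the $A$--action, which lives inside the convex hull of $\bigcup_{a\in A/A_0} aC_0$ and inherits a canonical $\ell^1$--structure from naturality of the convex core.

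Next I would induct on $n\geq 0$. The base case $n=0$ follows from the existence of a nonempty $A$--invariant convex core (a single point suffices). For $n=1$ with $A=\langle g\rangle$, Corollary~\ref{main median semisimple} gives either a fixed point or an axis, hence a convex subset isometric to a point or to $\R$.

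For the inductive step with $A\cong\Z^n$, decompose $A=\langle g\rangle\oplus A'$ with $A'\cong\Z^{n-1}$. Corollary~\ref{main median semisimple} splits the analysis of $g$ into two cases. If $g$ is elliptic, then $\Fix(g)\cu X$ is nonempty convex and $A$--invariant (since $A$ is abelian), and $g$ acts trivially on it; applying the inductive hypothesis to the induced $A'$--action on $\Fix(g)$ yields the desired flat. If $g$ is loxodromic, the central technical input is a product decomposition
\[\Min(g)\;\cong\;Y\x\R,\qquad d\;=\;d_Y+d_\R,\]
in which $g$ acts by $(y,t)\mapsto(y,t+\ell(g))$, and where every isometry of $X$ commuting with $g$ preserves the splitting and translates the $\R$--factor. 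Granting this, $A'$ acts on the connected finite-rank median space $Y$ by commuting isometries, the inductive hypothesis supplies a convex $C'\cu Y$ isometric to a subset of $(\R^m,d_{\ell^1})$ with $m\leq\rk Y$, and then $C'\x\R\cu\Min(g)\cu X$ is an $A$--invariant convex subset isometric to a subset of $(\R^{m+1},d_{\ell^1})$. The bound $m+1\leq\rk X$ is automatic since $\Min(g)\cong Y\x\R$ is a convex subset of $X$ of rank $\rk Y+1$.

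The main obstacle is establishing the product decomposition of $\Min(g)$ in the connected finite-rank median setting. In a $\CAT$ cube complex this follows from the combinatorics of hyperplanes transverse versus parallel to the axis; in the general median setting it should emerge from the convex-core machinery, which supplies enough halfspaces to foliate $\Min(g)$ as a genuine $\ell^1$--product. Verifying that commuting isometries respect the splitting and translate only the $\R$--factor is the other nontrivial check, and should follow from the naturality of the convex core construction.
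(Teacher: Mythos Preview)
Your inductive approach has a genuine obstruction in the loxodromic step. The splitting $\Min(g)\cong Y\times\R$ with $g$ acting as $(y,t)\mapsto(y,t+\ell(g))$ is false in general, even in the simplest connected examples. Take $X=(\R^2,d_{\ell^1})$ and $g$ the translation by $(1,1)$: then $\Min(g)=\R^2$, but no $\ell^1$--product decomposition $\R^2=Y\times\R$ has $g$ acting trivially on the $Y$--factor (the median isometries of $\R^2$ are signed coordinate permutations composed with translations, and none conjugates $g$ to a single-factor translation). More generally, let $\Z^2\acts\R^2$ by translations with generators acting as $(1,\sqrt{2})$ and $(\sqrt{2},1)$: then \emph{no} nontrivial element of $\Z^2$ translates along a coordinate axis, so no choice of $g$ rescues the induction. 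What the convex-core machinery actually gives (Corollary~\ref{core splitting}) is $\mc{C}(g)=\mc{C}_0(g)\times\mc{C}_1(g)$ where the essential factor $\mc{C}_1(g)$ is \emph{lineal} --- it embeds in some $(\R^k,d_{\ell^1})$, but with $k$ possibly as large as $\rk X$, and $g$ acts essentially on all of it rather than trivially on a complement. This breaks your bookkeeping: peeling off one generator does not hand $A'$ an action on a space of strictly smaller complexity.

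The elliptic branch and the reduction step are also unjustified. The fixed set of a median isometry is \emph{not} convex in general (for $g(x,y)=(y,x)$ on $[0,1]^2$ the fixed set is the diagonal, while $I((0,0),(1,1))=[0,1]^2$); even after your reduction to torsion-free $A$ you would owe a proof of convexity and of connectedness of $\Fix(g)$, neither of which is automatic. And the ``averaging over $A/A_0$'' paragraph is a hope, not an argument. The paper bypasses all of this: it passes once to a minimal $A$--invariant convex subset via Proposition~\ref{H=H_1}, proves directly that every essential isometric action of a polycyclic group on a finite-rank median space is lineal (Corollary~\ref{polycyclic in G}, inducting along a subnormal series and using the splitting $\overline{\mc{C}}(H)=\overline{\mc{C}}_0(H)\times\mc{C}_1(H)$ rather than $\Min(g)$), and then embeds a lineal space into $(\R^r,d_{\ell^1})$ in one stroke via Lemma~\ref{lineal completion lem}.
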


Some version of Corollaries~\ref{main median semisimple} and~\ref{flattorus cor} was certainly to be expected if one subscribes to the sensible view that ``anything that holds in $\CAT$ cube complexes and $\R$--trees should also hold in finite-rank median spaces''. What I found surprising is that these results do not require the metric on $X$ to be \emph{complete}, which might appear necessary since both involve finding a fixed point of sorts.

\medskip
In fact, most of the results in this paper hold for general automorphisms of finite-rank median \emph{algebras}, even when no metric or topology is present. 

As an example of the breadth of this extension, consider the case when $X=\R$. While, up to index $2$, every isometry of $\R$ is a translation, the automorphism group of the underlying median algebra is the entire homeomorphism group of $\R$.

Another interesting example is provided by the Baumslag--Solitar groups $BS(1,n)$ with $n\geq 2$. By \cite[Theorem~A]{Fio2}, these groups do not admit any free (or proper) actions by isometries on finite-rank median spaces. However, $BS(1,n)$ does admit a properly discontinuous, cocompact action by median-preserving homeomorphisms on the rank--$2$ median algebra $T_n\x\R$, where $T_n$ is the standard Bass--Serre tree (Example~\ref{BS example}).

This level of generality will prove particularly useful in our upcoming work \cite{Fio10a}, where we generalise to all fundamental groups of compact special cube complexes the classical fact that every outer automorphism of a hyperbolic group $G$ can be realised as a homothety of a small $G$--tree \cite{Paulin-ENS,BH-Ann,GJLL,GLL}. Actions on $\R$--trees will have to be replaced by actions on higher-dimensional median spaces. It will then be important that many results can be expressed purely in median-algebra terms, leaving us free to modify the median metric to suit our needs.

\medskip
In the rest of the introduction, we state our results in the general context of median algebras. This inevitably requires giving a few important definitions first.

\medskip
{\bf Statement of results on finite-rank median algebras.}
A \emph{median algebra} is a pair $(M,m)$, where $M$ is a set and $m\colon M^3\ra M$ is a map satisfying:
\begin{enumerate}
\item $m(x,x,y)=y$, for all $x,y\in M$;
\item $m(x_1,x_2,x_3)=m(x_{\s(1)},x_{\s(2)},x_{\s(3)})$, for all $x_1,x_2,x_3\in M$ and $\s\in{\rm Sym}(3)$;
\item $m(m(x,y,z),y,w)=m(x,y,m(z,y,w))$.
\end{enumerate}
Median algebras were originally introduced in order theory as a common generalisation of dendrites and lattices. They have been extensively studied in relation to semi-lattices (see e.g.\ \cite{Sholander,Isbell,Bandelt-Hedlikova}) and, more recently, in more geometrical terms because of their connections to the geometry of ${\rm CAT}(0)$ cube complexes and mapping class groups (e.g.\ in \cite{Chepoi,Roller,CDH,Bow-cm,Bow2}). 

A map $\varphi\colon M\ra N$ between median algebras is a \emph{median morphism} if it satisfies the equality $\varphi(m(x,y,z))=m(\varphi(x),\varphi(y),\varphi(z))$ for all $x,y,z\in M$. We denote by $\aut M$ the group of median automorphisms of $M$. 

If $X$ is a median space and $m\colon X^3\ra X$ is the map given by~(\ref{median space defn}), then the pair $(X,m)$ is a median algebra. Every isometric action $G\acts X$ gives an action by median automorphisms on $(X,m)$, but the converse does not hold (as already mentioned, every homeomorphism of $\R$ preserves its usual median-algebra structure). 

The \emph{rank} $\rk M$, is the supremum of $k\in\N$ for which $M$ contains a median subalgebra isomorphic to $\{0,1\}^k$ (the vertex set of a $k$--cube). For connected median spaces, this is equivalent to the definition of rank given earlier in this introduction (see Remark~\ref{definitions of rank}).

A subset $C\cu M$ is \emph{convex} if we have $m(x,y,z)\in C$ for all $x,y\in C$ and $z\in M$. If $M$ is the underlying median algebra of a geodesic median space $X$, then a subset $C$ is convex exactly when it contains all geodesics joining two of its points.

A \emph{halfspace} is a nonempty convex subset $\mf{h}\cu M$ such that its complement $\mf{h}^*:=M\setminus\mf{h}$ is also convex and nonempty. A \emph{wall} is an unordered pair $\{\mf{h},\mf{h}^*\}$, where $\mf{h}$ is a halfspace. A wall $\mf{w}=\{\mf{h},\mf{h}^*\}$ \emph{separates} two points $x,y\in M$ if $\mf{h}\cap\{x,y\}$ is a singleton. We write $\mscr{W}(x|y)$ for the set of walls that separate $x$ and $y$, which is nonempty as soon as $x\neq y$ \cite[Theorem~2.7]{Roller}.

\begin{defn}\label{minset defn}
Let $M$ be a median algebra. Consider $g\in\aut M$.
\begin{enumerate}
\item The \emph{minimal set} $\Min(g)$ is the set of points $x\in M$ such that the sets $\mscr{W}(g^nx|g^{n+1}x)$ are pairwise disjoint for $n\in\Z$.
\item We say that $g$ is \emph{semisimple} if $\Min(g)\neq\emptyset$.
\end{enumerate}
\end{defn}

When $X$ is a geodesic median space and $g\in\isom X$, a point $x\in X$ lies in $\Min(g)$ if and only if either $gx=x$ or $x$ lies on an axis of $g$.

\begin{defn}\label{ess/min defn}
Let $M$ be a median algebra. An action by median automorphisms $G\acts M$ is:
\begin{enumerate}
\item \emph{essential} if, for every halfspace $\mf{h}\cu M$, there exists $g\in G$ such that $g\mf{h}\subsetneq\mf{h}$;
\item \emph{minimal} if there exists no proper, $G$--invariant, convex subset of $M$; % proper = not M and not empty.
\item \emph{without wall inversions} if there do not exist $\mf{h}\in\mscr{H}(M)$ and $g\in G$ such that $g\mf{h}=\mf{h}^*$.
\end{enumerate}
\end{defn}

Perhaps counterintuitively, any action $G\acts M$ that originates from an isometric $G$--action on a \emph{connected} finite-rank median space is automatically without wall inversions (see Remark~\ref{connected inversions}). 

We are now ready to state the main result of this paper. For a more canonical (though more technical) result that is roughly equivalent to Theorem~\ref{cores intro}, we refer the reader to Theorem~\ref{abstract core}.

\begin{thmintro}\label{cores intro}
Let $M$ be a finite-rank median algebra. Let $G\acts M$ be an action by median automorphisms without wall inversions. Then:
\begin{enumerate} 
\item $G\acts M$ is essential if and only if $G\acts M$ is minimal;
\item if $G$ is finitely generated, there exists a nonempty, $G$--invariant, convex subset $C\cu M$ such that $G\acts C$ is essential.
\end{enumerate}
\end{thmintro}

When $M$ is the vertex set of a $\CAT$ cube complex $X$, the subset $C$ is $G$--equivariantly isomorphic to the \emph{essential core} of Caprace and Sageev \cite[Subsection~3.3]{CS}. Thus, Theorem~\ref{cores intro} implies that the essential core embeds into $X$ in this case (cf.\ Propositions~3.5 and~3.12 in \cite{CS}). 

Note that part~(2) of Theorem~\ref{cores intro} can fail when $G$ is not finitely generated, and this already happens for isometric actions on simplicial trees (Example~\ref{wollmilchsau}).

The set $C$ in part~(2) of Theorem~\ref{cores intro} is allowed to be a single point, in which case the action $G\acts M$ has a global fixed point. When $G\simeq\Z$, the set $C$ has a very special structure (it is an interval with endpoints in the \emph{zero-completion} of $M$, an analogue of the \emph{Roller boundary} of a cube complex). From this, one can deduce the following result, which, in turn, implies Corollary~\ref{main median semisimple}.

\begin{corintro}\label{ss cor intro}
Let $M$ be a finite-rank median algebra. If, for $g\in\aut M$, the action $\langle g\rangle\acts M$ is without wall inversions, then $g$ is semisimple.
\end{corintro}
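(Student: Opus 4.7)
The plan is to apply Theorem~\ref{cores intro} to the finitely generated group $G=\langle g\rangle$, which acts on $M$ without wall inversions by hypothesis. Part~(1) produces a nonempty, $\langle g\rangle$-invariant, convex subset $C\cu M$ on which $\langle g\rangle$ acts essentially. If $C$ is a single point $x$, then $gx=x$, the sets $\mscr{W}(g^nx\mid g^{n+1}x)$ are all empty, and $x\in\Min(g)$ trivially. So we may assume $\#C\geq 2$.

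The next step is to establish the structural fact promised in the introduction: that $C$, viewed inside the zero-completion of $M$, is an interval $[\xi_-,\xi_+]$ whose two endpoints are fixed by the natural extension of $g$. I would build $\xi_+$ by picking any halfspace $\mf{h}$ of $C$ with $g\mf{h}\subsetneq\mf{h}$ (such an $\mf{h}$ exists up to replacing $g$ by $g^{-1}$, by essentiality) and taking a suitable limit in the zero-completion of the nested sequence $\{g^n\mf{h}\}_{n\geq 0}$; symmetrically for $\xi_-$. Finite rank is what guarantees that these limits are well-defined, and minimality of $\langle g\rangle\acts C$, which follows from Theorem~\ref{cores intro}(2), should force $C$ to coincide with $[\xi_-,\xi_+]$.

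With the interval structure at hand, I would show that every $x\in C$ belongs to $\Min(g)$. The crucial observation is that every wall of $C$ separates $\xi_-$ from $\xi_+$; since $g$ fixes both endpoints, each such wall is $g$-moved but, by hypothesis, not $g$-inverted. A finite-rank argument rules out the possibility that $\mf{h}$ and $g\mf{h}$ are transverse: otherwise the iterates $\{g^n\mf{h}\}_{n\in\Z}$ would produce arbitrarily long pairwise transverse families, violating $\rk M<\infty$. Hence every wall of $C$ is genuinely skewered by $g$ itself, meaning $\{g^n\mf{h}\}_{n\in\Z}$ is a strictly nested chain. From this it follows that, for any $\mf{w}\in\mscr{W}(x\mid gx)$, no iterate $g^n\mf{w}$ with $n\neq 0$ can separate $x$ from $gx$; translating by $g^k$ gives that the wall-sets $\mscr{W}(g^kx\mid g^{k+1}x)$ are pairwise disjoint for $k\in\Z$, so $x\in\Min(g)$ and $g$ is semisimple.

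The main obstacle I anticipate is the structural step: promoting essentiality (skewering by \emph{some} power of $g$) to the statement that $g$ itself skewers each wall, and identifying $C$ with a single interval $[\xi_-,\xi_+]$ rather than a more complicated convex subset of the zero-completion. Both reductions rely essentially on the finite-rank hypothesis, without which the semisimplicity conclusion is known to fail (cf.\ Example~\ref{L^1 not semisimple ex}).
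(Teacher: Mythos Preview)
Your overall architecture matches the paper's: reduce to an essential $\langle g\rangle$--invariant convex subset $C$ via Theorem~\ref{cores intro} (this is Proposition~\ref{H=H_1}), then locate the two $g$--fixed points $\xi^{\pm}$ in the zero-completion with $C\cu I(\xi^-,\xi^+)$. This is exactly parts~(2)--(3) of Lemma~\ref{ss lemma}.

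The genuine gap is in your last step, and it is precisely the obstacle you flag at the end but then claim to resolve. The assertion that $\mf{h}$ and $g\mf{h}$ cannot be transverse is false: transversality of $\mf{h}$ and $g\mf{h}$ does \emph{not} force the orbit $\{g^n\mf{h}\}$ to be pairwise transverse, so no rank contradiction ensues. Concretely, take $M=\R^2$ and $g(x,y)=(y,x+1)$; this is a median automorphism acting essentially and stably without wall inversions ($g^2$ is translation by $(1,1)$). For $\mf{h}=\{x>0\}$ one has $g\mf{h}=\{y>1\}$ transverse to $\mf{h}$, yet $g^2\mf{h}=\{x>1\}\subsetneq\mf{h}$; the pairwise-transverse subfamily among iterates has size $2=\rk M$. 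Correspondingly, not every point of $C=\R^2$ lies in $\Min(g)$: for $p=(0,10)$ the wall at $x=5$ lies in both $\mscr{W}(p\mid gp)$ and $\mscr{W}(gp\mid g^2p)$.

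The paper does not attempt to upgrade ``skewered by some power'' to ``skewered by $g$''. Instead, part~(4) of Lemma~\ref{ss lemma} constructs a specific point of $\Min(g)$ by iteration: starting from any $x_0\in C$, set $x_{i+1}=m(x_i,gx_i,\xi^+)$. A computation gives $\mscr{W}(gx_i\mid x_{i+1})=\mscr{W}(gx_0,\dots,g^{i+1}x_0\mid x_0,\xi^+)$, and a rank argument (the Claim in that proof) forces this set to be empty once $i\geq r-1$, yielding $gx_i=x_{i+1}\in I(x_i,\xi^+)$ and hence $x_{r-1}\in\Min(g)$. Your stronger conclusion that $\Min(g)$ is all of $C$ is only available under the additional non-transversality hypothesis of part~(5) (compare Proposition~\ref{non-transverse prop}).
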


We will also deduce the following result, which, along with Theorem~\ref{cores intro}, implies Corollary~\ref{flattorus cor}.

\begin{corintro}\label{corintro polycyclic}
Let $G\acts X$ be a minimal, isometric action without wall inversions on a finite-rank median space. Let $H\leq G$ be a commensurated polycyclic subgroup. Then $X$ has a $G$--invariant product splitting $X=F\x P$ such that:
\begin{itemize}
\item $H\acts P$ fixes a point and $H\acts F$ factors through a virtually abelian group;
\item $F$ isometrically embeds in $(\R^r,d_{\ell^1})$ for $r=\rk X$. % we are not claiming equivariance of the embedding
\end{itemize}
\end{corintro}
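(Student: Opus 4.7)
The strategy is to induct on the derived length $d$ of $H$, at each step extracting a flat factor via Corollary~\ref{flattorus cor} applied to the last nontrivial derived subgroup $A := H^{(d-1)}$ and then reducing to a polycyclic group of strictly smaller derived length. The base case $d = 0$ is immediate: $H$ is trivial, so take $F$ to be a single point and $P = X$.

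For the inductive step, observe that $A$ is virtually abelian and characteristic in $H$, hence commensurated in $G$. Applying Corollary~\ref{flattorus cor} to the action $A \acts X$ yields a nonempty $A$--invariant convex subset isometric to a subset of $(\R^{n_0}, d_{\ell^1})$. Using commensurability of $A$ in $G$ together with minimality of $G \acts X$ and the absence of wall inversions, one promotes this to a $G$--invariant product splitting $X = F_0 \x Y$, on which $A$ acts on $F_0$ by translations and trivially on $Y$; the no-inversion hypothesis ensures that $G$ preserves each factor rather than swapping them.

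The induced action of $H$ on $Y$ factors through $\bar H := H / (H \cap K)$, where $K$ is the kernel of $G \acts Y$; this $\bar H$ is polycyclic of derived length $< d$, since $A \leq K$. Setting $\bar G := G / K$, the action $\bar G \acts Y$ is minimal (any proper $\bar G$--invariant convex subset of $Y$ would pull back to a proper $G$--invariant convex subset of $X$) and without wall inversions, and $\bar H$ is commensurated in $\bar G$. Applying the inductive hypothesis to $\bar H \leq \bar G \acts Y$ gives a $\bar G$--invariant, hence $G$--invariant, splitting $Y = F_1 \x P$ with $\bar H$ acting on $F_1$ through a virtually abelian quotient and fixing a point of $P$. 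Setting $F := F_0 \x F_1$ yields $X = F \x P$: the whole of $H$ fixes a point in $P$ (since $A$ acts trivially on $Y \supseteq P$ and $\bar H$ fixes a point of $P$), and the image of $H$ in $\isom F$ is virtually abelian, because it sits in $\isom F_0 \x \isom F_1$, the first factor is virtually abelian (the $\ell^1$--isometry group of $\R^{n_0}$ is $\R^{n_0} \rtimes \mathrm{BC}_{n_0}$, hence virtually abelian) and the second is virtually abelian by induction. The embedding $F \hookrightarrow \R^r$ with $r = \rk X$ follows by combining the two flat factors and using additivity of rank under median products.

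The main obstacle is the globalisation step in the second paragraph: Corollary~\ref{flattorus cor} only provides \emph{some} $A$--invariant flat, whereas we need one that is $G$--invariant. The argument should proceed by showing that the union of all $A$--invariant convex flats of maximal dimension is canonical, hence $G$--invariant by commensurability of $A$, and that minimality together with no wall inversions then forces this union to split off as a genuine product factor of $X$ (likely via Theorem~\ref{cores intro} or a closely related structural result on $A$--minsets). A subsidiary technical point is that the derived subgroup of a commensurated polycyclic subgroup is itself commensurated, which requires the nontrivial fact that commensurable polycyclic groups have commensurable derived series.
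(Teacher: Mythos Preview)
Your proposal has a circularity problem: in this paper, Corollary~\ref{flattorus cor} is \emph{deduced from} Corollary~\ref{corintro polycyclic}, not the other way around (see the proof at the end of Section~\ref{polycyclic sect}). Corollary~\ref{flattorus cor} is also stated only for connected median spaces, a hypothesis absent from Corollary~\ref{corintro polycyclic}.

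Even setting that aside, you have correctly identified the real gap yourself: the globalisation step is the entire difficulty, and your sketch of how to promote an $A$--invariant flat to a $G$--invariant product factor is not a proof. The union of maximal $A$--invariant flats has no evident convexity, and there is no mechanism in sight by which minimality alone would force it to split off as a direct factor. Your subsidiary claim that derived subgroups of commensurated polycyclic subgroups remain commensurated is also not justified.

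The paper's argument is different and avoids induction on derived length entirely. It works directly with the core $\mc{C}(H)$, which is canonically defined in terms of $\mc{H}_{1/2}(H)$ and is therefore $G$--invariant whenever $H$ is commensurated (Remark~\ref{core and commensurability}). The presence of a $G$--invariant metric then gives an intrinsic product decomposition $\mc{C}(H)=\mc{C}_0(H)\times\mc{C}_1(H)$ coming from the transversality of $\mc{H}_0(H)$ and $\mc{H}_1(H)$ (Corollary~\ref{core splitting}); both halfspace classes are $G$--invariant, so the splitting is $G$--invariant for free. Minimality of $G\acts X$ forces $\mc{C}(H)=X$. Then $H$ fixes a point of $P:=\mc{C}_0(H)$ by Proposition~\ref{finite orbit new prop}, while $H$ acts essentially on $F:=\mc{C}_1(H)$. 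The substantive remaining work, done in Section~\ref{polycyclic sect}, is to show that an essential isometric action of a polycyclic group forces the space to be \emph{lineal} (Corollary~\ref{polycyclic in G}); the embedding of $F$ into $\R^r$ and the virtually abelian image then follow from Lemmas~\ref{lineal completion lem} and~\ref{isometries of intervals}. The point is that the $G$--invariant splitting comes first, from the canonical nature of the core, and the flat structure of $F$ is established afterwards.
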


A very similar result for actions on (possibly infinite-dimensional) $\CAT$ cube complexes was recently obtained by Genevois (see Theorem~1.1 and Theorem~1.2 in \cite{Genevois-flattorus}).

It would be interesting to see if Corollary~\ref{corintro polycyclic} admits an analogue for general actions by median automorphisms on finite-rank median algebras. There are significant complications that arise in this setting (above all, the failure of Corollary~\ref{core splitting}), but I was not able to find a counterexample. 

\medskip
{\bf Paper Outline.} Section~\ref{prelims sect} contains some basic facts on median algebras. We also obtain a novel extension of Helly's lemma to infinite families of halfspaces (Lemma~\ref{nonempty intersection criterion}), which will prove hugely useful for many of the results in this paper. In Subsection~\ref{wollmilchsau sect}, we describe an example showing that many results of this paper fail for infinitely generated groups.

Section~\ref{main sect} constitutes the heart of this work. After some preliminary facts, we introduce the \emph{convex core} in Subsection~\ref{convex cores sect}. This will be our main tool throughout the paper. Theorem~\ref{abstract core}, proving that cores are nonempty, can also be viewed as our main result, having Theorem~\ref{cores intro} as one of its consequences. The two parts of Theorem~\ref{cores intro} are proved as Proposition~\ref{H=H_1} and Lemma~\ref{essential vs minimal cor} at the end of Subsection~\ref{convex cores sect}. Corollary~\ref{ss cor intro} is obtained in Subsection~\ref{ss sect}. 

In Subsection~\ref{non-transverse sect}, we study \emph{non-transverse} automorphisms of median algebras. The main results are Proposition~\ref{gate-convex C bar} and Proposition~\ref{cores of subalgebras}, which will prove useful in \cite{Fio10a}. 

In Section~\ref{compatible metrics sect}, we prove Corollary~\ref{main median semisimple} and refine a few results in the presence of a median metric (see in particular Corollary~\ref{core splitting} and Proposition~\ref{core of g when metric}). Finally, Section~\ref{polycyclic sect} is devoted to the proof of Corollaries~\ref{flattorus cor} and~\ref{corintro polycyclic}. 

\medskip
{\bf Acknowledgments.} I would like to thank Anthony Genevois for conversations related to Example~\ref{BS example}, Ric Wade for pointing me to \cite[Example~II.6]{Gaboriau-Levitt}, and the referee for their helpful comments and, especially, for suggesting Example~\ref{wollmilchsau}. 

I am grateful to Ursula Hamenst\"adt and the Max Planck Institute for Mathematics in Bonn for their hospitality and financial support while part of this work was being completed.

\tableofcontents

\section{Preliminaries.}\label{prelims sect}

\subsection{Some classical facts.}

Several fundamental notions in the study of median algebras were already defined in the Introduction. In this subsection, we collect some additional terminology and notation, along with a few basic facts. For a more detailed introduction to median algebras, see e.g.\ \cite[Sections~2--4]{CDH}, \cite[Sections~4--6]{Bow-cm} and \cite[Section~2]{Fio1}.

Let $M$ be a median algebra. Given two subsets $A,B\cu M$, we use the notation:
\begin{align*}
\mscr{H}(A|B):=&\{\mf{h}\in\mscr{H}(M) \mid A\cu\mf{h}^*,\ B\cu\mf{h}\}, \\
\mscr{W}(A|B):=&\{\{\mf{h},\mf{h}^*\}\in\mscr{W}(M) \mid \mf{h}\in\mscr{H}(A|B)\cup\mscr{H}(B|A)\}, \\
\mscr{H}_A(M):=&\{\mf{h}\in\mscr{H}(M) \mid \mf{h}\cap A\neq\emptyset,\ \mf{h}^*\cap A\neq\emptyset\}.
\end{align*}
Given a subset $\mc{H}\cu\mscr{H}(M)$, we write $\mc{H}^*:=\{\mf{h}^*\mid\mf{h}\in\mc{H}\}$.

If $\mf{w}=\{\mf{h},\mf{h}^*\}$ is a wall, we say that $\mf{h}$ and $\mf{h}^*$ are its two \emph{sides} and that $\mf{w}$ \emph{bounds} $\mf{h}$ and $\mf{h}^*$. Halfspaces $\mf{h}$ and $\mf{k}$ are \emph{transverse} if each of the four intersections $\mf{h}\cap\mf{k}$, $\mf{h}^*\cap\mf{k}$, $\mf{h}\cap\mf{k}^*$, $\mf{h}^*\cap\mf{k}^*$ is nonempty. Two walls are \emph{transverse} if they bound transverse halfspaces. If $\mc{U}$ and $\mc{V}$ are sets of halfspaces or walls, we say that $\mc{U}$ and $\mc{V}$ are \emph{transverse} if every element of $\mc{U}$ is transverse to every element of $\mc{V}$. 

\begin{rmk}[Definitions of rank]\label{definitions of rank}
In the introduction, we defined $\rk M$ as the largest $k$ such that $M$ has a median subalgebra isomorphic to $\{0,1\}^k$. When $\rk M$ is finite, this coincides with the maximal cardinality of a set of pairwise-transverse walls of $M$ \cite[Proposition~6.2]{Bow-cm}.

If $X$ is a connected median space, then the rank of the underlying median algebra also coincides with the supremum of the topological dimensions of the compact subsets of $X$. One inequality follows from Theorem~2.2 and Lemma~7.6 in \cite{Bow-cm}, while the other from \cite[Proposition~5.6]{Bow4}. 
\end{rmk}

A \emph{pocset} is a triple $(P,\preceq,\ast)$, where $(P,\preceq)$ is a poset and $\ast$ is an order-reversing involution. Given pocsets $P,P'$, a map $f\colon P\ra P'$ is a \emph{morphism of pocsets} if, for all $x,y\in P$ with $x\leq y$, we have $f(x)\leq f(y)$ and $f(x)^*=f(x^*)$. For every median algebra $M$, the triple $(\mscr{H}(M),\cu,\ast)$ is a pocset. 

An \emph{ultrafilter} is a subset $\s\cu\mscr{H}(M)$ such that the halfspaces in $\s$ pairwise intersect and $\s\cap\{\mf{h},\mf{h}^*\}$ is a singleton for every $\mf{h}\in\mscr{H}(M)$. For every $x\in M$, the set of all halfspaces containing $x$ is an ultrafilter, which we denote by $\s_x$.

Convex subsets were defined in the introduction. In this paragraph, we recall two important properties that they enjoy. First, Helly's lemma: every finite family of pairwise-intersecting convex subsets has nonempty intersection \cite[Theorem~2.2]{Roller}. Second, if $C_1,C_2\cu M$ are disjoint convex subsets, then $\mscr{W}(C_1|C_2)\neq\emptyset$ \cite[Theorem~2.8]{Roller}.

\begin{rmk}\label{median homo rmk}
Consider median algebras $M$ and $N$, and a median morphism $\phi\colon M\ra N$. If $C_1\cu M$ and $C_2\cu N$ are convex, then $\phi^{-1}(C_2)$ in $M$, and $\phi(C_1)$ is convex in $\phi(N)$.

When $\phi$ is surjective, we obtain a well-defined map $\phi^*\colon\mscr{H}(N)\ra\mscr{H}(M)$ given by $\phi^*(\mf{h})=\phi^{-1}(\mf{h})$ (surjectivity is needed to avoid empty preimages). The map $\phi^*$ is an injective morphism of pocsets and it preserves transversality. The image of $\phi^*$ is exactly the set of those halfspaces $\mf{h}\in\mscr{H}(M)$ for which $\phi(\mf{h})\cap\phi(\mf{h}^*)=\emptyset$.
\end{rmk}

A subset $C\cu M$ is \emph{gate-convex} if there exists a \emph{gate-projection} $\pi\colon M\ra C$, i.e.\ a map that satisfies $m(x,\pi(x),y)=\pi(x)$ for all $x\in M$ and $y\in C$. When they exist, gate-projections are unique and they are median morphisms. Moreover, $\mscr{H}(x|\pi(x))=\mscr{H}(x|C)$ for every $x\in M$. Every gate-convex subset is convex, and every convex subset is a median subalgebra. 

\begin{rmk}\label{halfspaces of subsets} 
\begin{enumerate}
\item[]
\item If $S\cu M$ is a subalgebra, we have a map $\res_S\colon\mscr{H}_S(M)\ra\mscr{H}(S)$ given by 
$\res_S(\mf{h})=\mf{h}\cap S$. This is a morphism of pocsets and, by \cite[Lemma~6.5]{Bow-cm}, it is a surjection.
\item If $C\cu M$ is convex, then $\res_C$ is injective and it preserves transversality. In this case, the sets $\mscr{H}(C)$ and $\mscr{H}_C(M)$ are naturally identified.

In order to see this, observe that, if $\mf{h},\mf{k}\in\mscr{H}_C(M)$ satisfy $\mf{h}\cap\mf{k}\neq\emptyset$, then Helly's lemma yields 
$(\mf{h}\cap C)\cap(\mf{k}\cap C)\neq\emptyset$. Note that $\mf{h}=\mf{k}$ if and only if $\mf{h}\cap\mf{k}^*$ and 
$\mf{h}^*\cap\mf{k}$ 
are empty.
\item If $C\cu M$ is gate-convex with gate-projection $\pi_C$, we have, in the notation of Remark~\ref{median homo rmk}: $\res_C\o\pi_C^*=\id_{\mscr{H}(C)}$ and $\pi_C^*\o\res_C=\id_{\mscr{H}_C(M)}$.
\end{enumerate}
\end{rmk}

Given points $x,y\in M$, the set
\[I(x,y):=\{z\in M \mid m(x,y,z)=z\}\]
is called the \emph{interval} with endpoints $x$ and $y$. This is a gate-convex subset of $M$ with gate-projection given by the map $z\mapsto m(x,y,z)$. Nonempty intersections of intervals are again intervals.

For all $x,y,z\in M$, we have $I(x,y)\cap I(y,z)\cap I(z,x)=\{m(x,y,z)\}$. The median $m(x,y,z)$ is also the unique point with the property that $m(x,y,z)$ lies in a halfspace $\mf{h}\in\mscr{H}(M)$ if and only if $\mf{h}$ contains at least two of the three points $x,y,z$. We will implicitly rely on the latter characterisation of the median many times throughout the paper. However, we prove explicitly here the following identities, which are a consequence of this principle and are needed in the proof of Lemma~\ref{ss lemma}.

\begin{lem}\label{lem:identities}
For points $a,b,c,x\in M$ and $a_0,\dots,a_k\in M$ with $k\geq 2$, we have:
\begin{align*}
%\mscr{W}(x \mid m(a,b,c))&=\mscr{W}(x|a,b)\cup \mscr{W}(x|b,c)\cup \mscr{W}(x|c,a), \\
\mscr{W}(a|b,c)&=\mscr{W}(a \mid m(a,b,c)), \\
%\mscr{W}(a,b|c,d)&=\mscr{W}(m(a,b,c) \mid c, m(a,c,d)), \\
%\mscr{W}(m(b,c,x), m(a,b,c) \mid c, m(a,c,d))&=\mscr{W}(a,b,x|c,d). \\
\mscr{W}(a_1,\dots,a_k|x,a_0)&=\mscr{W}(m(a_1,a_2,x),m(a_2,a_3,x),\dots, m(a_{k-1},a_k,x) \mid x, m(a_0,a_1,x)).
\end{align*}
\end{lem} 
\begin{proof}
The first identity is immediate from the fact that the median of three points lies in a halfspace $\mf{h}$ if and only if $\mf{h}$ contains at least two of the three points. The second identity is obtained by a repeated application of the first, namely the fact that for each $1\leq i \leq k-1$ we have:
\begin{align*}
\mscr{W}(m(a_i,a_{i+1},x) \mid x)&=\mscr{W}(a_i,a_{i+1}| x), & \mscr{W}(a_1 \mid m(a_0,a_1,x))&=\mscr{W}(a_1| x,a_0).
\end{align*}
\end{proof}

We conclude this subsection with a few basic observations on group actions on median algebras.

\begin{rmk}\label{gate-convex inversions}
Let $G\acts M$ be an action by median automorphisms with no wall inversions. If $C\cu M$ is a $G$--invariant convex subset, then $G\acts C$ has no wall inversions. This is because, as observed in Remark~\ref{halfspaces of subsets}, every halfspace of $C$ is of the form $\mf{h}\cap C$ for a \emph{unique} $\mf{h}\in\mscr{H}(M)$. Thus, if $g\mf{h}\cap C=\mf{h}^*\cap C$ for some $g\in G$, we must have $g\mf{h}=\mf{h}^*$.
\end{rmk}

\begin{lem}\label{from finite orbit to finite cube 1}
Let $G\acts M$ be an action by median automorphisms with a finite orbit. Then there exists a $G$--invariant median subalgebra $C\cu M$ such that $C\simeq\{0,1\}^k$ for some $0\leq k<+\infty$.
% we don't need to require M to have finite rank
\end{lem}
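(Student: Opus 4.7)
The first step is to reduce to a finite subalgebra. Let $F=Gx$ be the given finite orbit and let $H\cu M$ be its \emph{median hull}, i.e.\ the smallest median subalgebra of $M$ containing $F$. An elementary induction shows that the median hull of a finite subset of any median algebra is itself finite: if $S\cu M$ is finite, then so is $S\cup\{m(a,b,c)\mid a,b,c\in S\}$, and iterated closure must stabilise after finitely many rounds. Hence $H$ is a finite, $G$-invariant median subalgebra, and the $G$-action on $H$ factors through a finite subgroup $\bar G\leq\aut H$.

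Next, I would realise $H$ as the vertex set of a finite $\CAT(0)$ cube complex $X$ (equivalently, apply Sageev's construction to the pocset $\mscr{H}(H)$, noting that $H$ is canonically recovered as the set of ultrafilters). Cubes of $X$ correspond to maximal sets of pairwise transverse walls of $H$, and every median automorphism of $H$ permutes halfspaces in a pocset-preserving, transversality-preserving way, hence extends uniquely to a cubical automorphism of $X$. In particular, $\bar G$ acts on $X$ by cubical isometries. Since $\bar G$ is finite and $X$ is a complete $\CAT(0)$ space, the Bruhat--Tits fixed-point theorem furnishes a point $p\in X$ fixed by $\bar G$.

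Finally, let $\sigma$ be the unique closed cube of $X$ whose relative interior contains $p$. Distinct cubes have disjoint relative interiors, so $\bar G$ must preserve $\sigma$ setwise, and therefore acts on its vertex set $V(\sigma)$. This vertex set is a convex subcomplex of $X$, and hence a median subalgebra of $H\cu M$, isomorphic as a median algebra to $\{0,1\}^{\dim\sigma}$. Setting $C:=V(\sigma)$ yields a $G$-invariant median subalgebra of $M$ of the required form; the degenerate case $\dim\sigma=0$ simply corresponds to $\bar G$ having a global fixed vertex in $H$ (so $k=0$).

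\textbf{Main obstacle.} The only genuinely delicate points are the finiteness of the median hull and the cube-complex realisation with automorphism-extension; both are classical but not entirely transparent. A purely combinatorial alternative would be an induction on $|H|$, splitting along carefully chosen $\bar G$-invariant walls in $\mscr{H}(H)$, but I expect such a proof to essentially recover the $\CAT(0)$ fixed-point argument in disguise.
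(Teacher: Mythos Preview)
Your proposal is correct and follows essentially the same route as the paper: pass to the finite median subalgebra generated by the orbit, realise it as the vertex set of a finite $\CAT$ cube complex, use a $\CAT$ fixed-point argument (the paper invokes the unique barycentre of the compact complex rather than Bruhat--Tits, but this is the same idea), and take the vertex set of the cube containing the fixed point. The only caveat is that the finiteness of the median hull is less elementary than your phrasing suggests---your ``iterated closure must stabilise'' is really the conclusion rather than the argument, and the paper cites Bowditch for this fact.
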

\begin{proof}
Let $x\in M$ be a point such that $G\cdot x$ is finite. The median subalgebra of $M$ generated by $G\cdot x$ is $G$--invariant and, by \cite[Lemma~4.2]{Bow-cm}, it is finite. It was observed in \cite[Section~3]{Bow4} that every finite median algebra is isomorphic to the $0$--skeleton of a finite $\CAT$ cube complex. If $X$ is a finite $\CAT$ cube complex with $X^{(0)}$ isomorphic to $\langle G\cdot x\rangle$, the induced action $G\acts X^{(0)}$ preserves the median of $X$, hence it extends to an action by cubical automorphisms on the entire $X$. Since $X$ is a compact $\CAT$ space, it has a unique barycentre $p\in X$. Since the $G$--action preserves the $\CAT$ metric on $X$, the point $p$ is fixed by $G$. Thus $G$ leaves invariant the $0$--skeleton of the unique open cube of $X$ containing $p$. This provides the required median subalgebra $C\cu M$.
\end{proof}

The following examples were mentioned in the Introduction.

\begin{ex}\label{L^1 not semisimple ex}
There are isometries of $X=L^1(\R)$ whose translation distance is not realised. Note that $X$ is a complete, geodesic, infinite-rank median space with the metric induced by its norm.

As far as I know, the following is the quickest way to see this. There exist a standard Borel space $\Om$ and an infinite, $\s$--finite, atomless measure $\nu$ such that $\H^2$ embeds in $Y=L^1(\Om,\nu)$ isometrically and equivariantly with respect to an embedding $\iota\colon\isom\H^2\hookrightarrow\isom Y$ (see e.g.\ Example~3.7 and Proposition~3.14 in \cite{CDH}). Since $\Om$ is standard, the space $L^1(\Om,\nu)$ is isometric to $L^1(\R)$. 

If $g$ is a parabolic isometry of $\H^2$, then $\langle g\rangle$ acts with unbounded orbits on $\H^2$, so $\iota(g)$ acts with unbounded orbits on $Y$. In particular, $\iota(g)$ does not fix a point of $Y$. However, since the translation length of $g$ in $\H^2$ is zero, there exist points of $Y$ that are moved arbitrarily little by $\iota(g)$.
\end{ex}

\begin{ex}\label{BS example}
Let $G$ be the group $BS(1,n)=\langle a,b \mid aba^{-1}=b^n\rangle$ with $n\geq 2$. Let $G\acts T$ be the Bass--Serre tree corresponding to the HNN splitting evident from this presentation. Consider the action $G\acts\R$ where $b$ is a translation of length $1$ and $a$ is a homothety of factor $n$ fixing the origin. 

Thus, $G\acts\R$ is an action by homotheties and $G\acts T$ is an action by isometries. The product $T\x\R$ is a rank--$2$ median space, and the diagonal action $G\acts T\x\R$ is by homeomorphisms that are also automorphisms of the median-algebra structure. It is straightforward to check that the latter action is cocompact, free, and metrically proper (though not uniformly proper).

As mentioned in the introduction, \cite[Theorem~A]{Fio2} implies that $G$ does not admit any free \emph{isometric} actions on finite-rank median spaces. However, $G$ does act properly, freely and by isometries on \emph{infinite-rank} median spaces, since $G$ has the Haagerup property \cite[Theorem~1.2]{CDH}.
\end{ex}

\subsection{Restriction quotients.}\label{restriction quotients sect}

Let $M$ be a median algebra with an action $G\acts M$ by median automorphisms. In the special case when $M$ is the vertex set of a $\CAT$ cube complex, the notion of \emph{restriction quotient} was introduced in \cite[Subsection~2.3]{CS}.

Consider a $G$--invariant subset $\mc{U}\cu\mscr{W}(M)$. Given $x,y\in M$, we write $x\sim_{\mc{U}}y$ if $\mscr{W}(x|y)\cap\mc{U}=\emptyset$. Observe that, for all $x,y,z\in M$ and $x',y',z'\in M$, we have:
\begin{align*}
\mscr{W}(x|y)&\cu\mscr{W}(x|z)\cup\mscr{W}(z|y),& \mscr{W}(m(x,y,z)|m(x',y',z'))&\cu\mscr{W}(x|x')\cup\mscr{W}(y|y')\cup\mscr{W}(z|z').
\end{align*}
We conclude that $\sim_{\mc{U}}$ is an equivalence relation and that the $\sim_{\mc{U}}$--equivalence class of $m(x,y,z)$ only depends on the $\sim_{\mc{U}}$--equivalence classes of $x,y,z$. Thus, the quotient $M/\sim_{\mc{U}}$ naturally inherits a structure of median algebra, which we denote by $M(\mc{U})$. Since $\mc{U}$ is $G$--invariant, so is $\sim_{\mc{U}}$, and $M(\mc{U})$ is again endowed with a $G$--action by median automorphisms.

\begin{defn}
We say that $M(\mc{U})$ is the \emph{restriction quotient} of $M$ associated to $\mc{U}$.
\end{defn}

The quotient projection $\pi_{\mc{U}}\colon M\ra M(\mc{U})$ is a $G$--equivariant, surjective median morphism. Remark~\ref{median homo rmk} shows that the projection $\pi_{\mc{U}}$ induces an injective, $G$--equivariant morphism of pocsets $\pi_{\mc{U}}^*\colon\mscr{H}(M(\mc{U}))\ra\mscr{H}(M)$. Since $\pi_{\mc{U}}^*$ preserves transversality, this shows that $\rk M(\mc{U})\leq\rk M$. 

We say that a subset $A\cu M$ is \emph{$\sim_{\mc{U}}$--saturated} if it is a union of $\sim_{\mc{U}}$--equivalence classes; equivalently $A=\pi_{\mc{U}}^{-1}(\pi_{\mc{U}}(A))$. The image of $\pi_{\mc{U}}^*$ is precisely the set of $\sim_{\mc{U}}$--saturated halfspaces. In general, this will be larger than the set of halfspaces bounded by the walls in $\mc{U}$.

\subsection{The fundamental lemma.}

Let $M$ be a median algebra of rank $r<+\infty$.

Given a subset $\mc{H}\cu\mscr{H}(M)$, we denote by $\bigcap\mc{H}$ the intersection of all halfspaces that it contains. We say that a subset $\mscr{C}\cu\mscr{H}(M)$ is a \emph{chain} of halfspaces if it is totally ordered by inclusion. 

The following simple fact is a key ingredient of many results in this paper. It can be viewed as an extension of Helly's lemma to infinite sets of halfspaces.

\begin{lem}\label{nonempty intersection criterion}
Let $\mc{H}\cu\mscr{H}(M)$ be a set of pairwise-intersecting halfspaces such that any chain in $\mc{H}$ admits a lower bound in $\mc{H}$. Then the intersection of all halfspaces in $\mc{H}$ is nonempty.
\end{lem}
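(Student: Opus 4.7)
The plan is to combine Zorn's lemma with a median-based construction. I would first use the chain-lower-bound hypothesis to reduce to an antichain of halfspaces. Given any $\mf{k}_0\in\mc{H}$, the sub-family $\{\mf{k}\in\mc{H}\mid\mf{k}\cu\mf{k}_0\}$ inherits the chain hypothesis (any chain in it is a chain in $\mc{H}$, and its lower bound, being $\cu\mf{k}_0$, lies in the sub-family), so Zorn's lemma applied with the reversed order produces a minimal element of $\mc{H}$ contained in $\mf{k}_0$. Writing $\mc{M}\cu\mc{H}$ for the set of minimal elements, every $\mf{k}\in\mc{H}$ contains some $\mf{m}\in\mc{M}$, and hence $\bigcap\mc{H}=\bigcap\mc{M}$. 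By construction, $\mc{M}$ is a pairwise-intersecting antichain.

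For each $x\in M$ I would then consider the ``signature'' $S(x):=\{\mf{h}\in\mc{M}\mid x\not\in\mf{h}\}$. For distinct $\mf{h},\mf{k}\in S(x)$, all four intersections $\mf{h}\cap\mf{k}$, $\mf{h}^*\cap\mf{k}$, $\mf{h}\cap\mf{k}^*$, $\mf{h}^*\cap\mf{k}^*$ are nonempty: the first by the pairwise-intersection hypothesis, the middle two because $\mc{M}$ is an antichain, and the last because it contains $x$. Hence $\mf{h}$ and $\mf{k}$ are transverse, so $S(x)$ corresponds to a pairwise-transverse family of walls and $|S(x)|\leq r:=\rk M$ by Remark~\ref{definitions of rank}.

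If $S(x)=\emptyset$ then $x\in\bigcap\mc{M}$ and we are done, so assume otherwise. Helly's lemma lets me choose $y\in\bigcap_{\mf{h}\in S(x)}\mf{h}$ (a finite intersection of at most $r$ pairwise-intersecting halfspaces) and then $z\in\bigcap_{\mf{h}\in S(x)\cup S(y)}\mf{h}$ (at most $2r$ pairwise-intersecting halfspaces). By construction $S(y)\cap S(x)=\emptyset$ and $S(z)\cap(S(x)\cup S(y))=\emptyset$, so the three signatures are pairwise disjoint. Since halfspaces are preserved by medians (i.e.\ $m(x,y,z)\in\mf{h}$ whenever at least two of $x,y,z$ lie in $\mf{h}$), every $\mf{h}\in\mc{M}$ is missed by at most one of the three points and therefore contains $m(x,y,z)$. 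Thus $m(x,y,z)\in\bigcap\mc{M}=\bigcap\mc{H}$.

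The main conceptual step is the reduction to the antichain $\mc{M}$: without the antichain property, the signatures $S(x)$ could contain nested pairs and need not be bounded by the rank, so the final median trick would break down. The chain-lower-bound hypothesis is exactly what is needed, via Zorn's lemma, to carry out this reduction; the rank assumption is what makes the finite application of Helly possible once we are inside the antichain.
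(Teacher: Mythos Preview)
Your proof is correct and takes a genuinely different route from the paper's. The paper never reduces to an antichain: given an arbitrary $x\in M$, it observes that the set $\mc{H}_x=\{\mf{h}\in\mc{H}\mid x\notin\mf{h}\}$ consists of halfspaces that are pairwise either nested or transverse, invokes Dilworth's lemma to partition $\mc{H}_x$ into at most $r$ chains, and then uses the chain-lower-bound hypothesis together with Helly to produce $y\in\bigcap\mc{H}_x$. A second application of Dilworth and the lower-bound hypothesis to $\mc{H}_{x,y}\cu\mscr{H}(x|y)\sqcup\mscr{H}(y|x)$ produces $z$, and then $w=m(x,y,z)$ lies in $\bigcap\mc{H}$.

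Your argument front-loads the work: Zorn's lemma converts the chain-lower-bound hypothesis into the existence of the minimal antichain $\mc{M}$ once and for all. After that reduction the signatures $S(x)$ are automatically pairwise-transverse and hence of size at most $r$, so Dilworth is never needed and Helly applies directly to finite sets. The trade-off is that the paper's argument is more explicit (one sees exactly which chains are being used and where the lower bounds enter), whereas yours is structurally cleaner but uses choice in a less transparent way. Both approaches converge on the same median trick at the end.
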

\begin{proof}
Pick any point $x\in M$ and let $\mc{H}_x\cu\mc{H}$ be the subset of those halfspaces that do not contain $x$. We begin by finding a point $y\in M$ that lies in every element of $\mc{H}_x$. 

Given $\mf{h},\mf{k}\in\mc{H}_x$, we have $\mf{h}\cap\mf{k}\neq\emptyset$ by our hypothesis on $\mc{H}$, and $x\in\mf{h}^*\cap\mf{k}^*$ by definition of $\mc{H}_x$. Hence either $\mf{h}\cu\mf{k}$, or $\mf{k}\cu\mf{h}$, or $\mf{h}$ and $\mf{k}$ are transverse. Since $M$ has finite rank $r$, Dilworth's lemma allows us to partition $\mc{H}_x=\mscr{C}_1\sqcup\dots\sqcup\mscr{C}_k$, where $k\leq r$ and each $\mscr{C}_i$ is totally ordered by inclusion.  Let $\mf{h}_1,\dots,\mf{h}_k\in\mc{H}$ be lower bounds for $\mscr{C}_1,\dots,\mscr{C}_k$. Since the elements of $\mc{H}$ pairwise intersect, Helly's lemma implies that $\mf{h}_1\cap\dots\cap\mf{h}_k\neq\emptyset$. We pick the point $y$ in this intersection.

Now, let $\mc{H}_{x,y}\cu\mc{H}$ be the subset of halfspaces that contain exactly one point from the set $\{x,y\}$. Note that $\mc{H}_{x,y}\cu\mscr{H}(x|y)\sqcup\mscr{H}(y|x)$. Hence, again by Dilworth's lemma, we can partition $\mc{H}_{x,y}$ into finitely many chains and there exists a point $z\in\bigcap\mc{H}_{x,y}$. 

Setting $w=m(x,y,z)$, we finally show that $w\in\bigcap\mc{H}$. Suppose for the sake of contradiction that $w\in\mf{h}^*$ for some $\mf{h}\in\mc{H}$. Then at least two of the three points $x,y,z$ must lie in $\mf{h}^*$. By our choice of $y$, there does not exist $\mf{h}\in\mc{H}$ with $\{x,y\}\cu\mf{h}^*$. By our choice of $z$, there does not exist $\mf{h}\in\mc{H}$ with $\mf{h}\in\mscr{H}(x,z|y)\sqcup\mscr{H}(y,z|x)$. This is the desired contradiction.
\end{proof}

\begin{rmk}\label{bigcap chain halfspace}
If $\mscr{C}$ is a chain of halfspaces and $\bigcap\mscr{C}\neq\emptyset$, then $\bigcap\mscr{C}$ is a halfspace. Indeed, both $\bigcap\mscr{C}$ and its complement are convex (ascending unions of convex sets are convex).
\end{rmk}

If $\mscr{C}$ is a chain of halfspaces, we say that a subset $\mscr{C}'\cu\mscr{C}$ is \emph{cofinal} if every halfspace in $\mscr{C}$ contains a halfspace in $\mscr{C}'$. In this case, we have $\bigcap\mscr{C}'=\bigcap\mscr{C}$.

\begin{lem}\label{gate-convexity criterion}
A convex subset $C\cu M$ is gate-convex if and only if there does not exist a chain $\mscr{C}\cu\mscr{H}_C(M)$ such that $\bigcap\mscr{C}$ is nonempty and disjoint from $C$.
\end{lem}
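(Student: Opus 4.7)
My plan is to prove the two implications separately, with Lemma~\ref{nonempty intersection criterion} doing the heavy lifting for the harder direction.

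\emph{Only if}: Suppose $C$ is gate-convex with gate-projection $\pi$, and for contradiction let $\mscr{C}\cu\mscr{H}_C(M)$ be a chain with $\bigcap\mscr{C}$ nonempty and disjoint from $C$. Pick $x\in\bigcap\mscr{C}$. For every $\mf{h}\in\mscr{C}$, both $\mf{h}$ and $\mf{h}^*$ meet $C$, so $\mf{h}^*\notin\mscr{H}(x|C)=\mscr{H}(x|\pi(x))$. Since $x\in\mf{h}$, this forces $\pi(x)\in\mf{h}$, giving $\pi(x)\in C\cap\bigcap\mscr{C}$---a contradiction.

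\emph{If}: Assume no such chain exists. Fix $x\in M$ and set
\[\mscr{G}_x:=\{\mf{h}\in\mscr{H}_C(M) \mid x\in\mf{h}\}.\]
Through the bijection $\res_C\colon\mscr{H}_C(M)\ra\mscr{H}(C)$ of Remark~\ref{halfspaces of subsets}, we regard $\{\mf{h}\cap C \mid \mf{h}\in\mscr{G}_x\}$ as a family of halfspaces in the median algebra $C$, which has rank at most $r$, and apply Lemma~\ref{nonempty intersection criterion} to it. Given $\mf{h},\mf{k}\in\mscr{G}_x$, Helly's lemma applied to the pairwise-intersecting convex sets $\mf{h},\mf{k},C$ (with $x\in\mf{h}\cap\mf{k}$) yields $\mf{h}\cap\mf{k}\cap C\neq\emptyset$, so the restricted halfspaces pairwise intersect in $C$. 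For a chain $\mscr{C}\cu\mscr{G}_x$, the intersection $\bigcap\mscr{C}$ contains $x$, hence is nonempty, and is a halfspace by Remark~\ref{bigcap chain halfspace}; its complement meets $C$ since $(\bigcap\mscr{C})^*\cap C=\bigcup_{\mf{h}\in\mscr{C}}(\mf{h}^*\cap C)\neq\emptyset$, and by hypothesis $\bigcap\mscr{C}\cap C\neq\emptyset$ as well. Thus $\bigcap\mscr{C}\in\mscr{G}_x$ provides the required lower bound. Lemma~\ref{nonempty intersection criterion} now produces a point $\pi(x)\in C$ lying in $\mf{h}\cap C$ for every $\mf{h}\in\mscr{G}_x$.

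It remains to verify that $\pi$ is a gate-projection, for which it suffices to show $\mscr{H}(x|\pi(x))\cu\mscr{H}(x|C)$ (the reverse inclusion holds since $\pi(x)\in C$). Take $\mf{h}\in\mscr{H}(x|\pi(x))$ and assume for contradiction that $\mf{h}^*\cap C\neq\emptyset$. Combined with $\pi(x)\in\mf{h}\cap C$ this places $\mf{h}$ in $\mscr{H}_C(M)$, and since $x\in\mf{h}^*$ we get $\mf{h}^*\in\mscr{G}_x$; but then by construction $\pi(x)\in\mf{h}^*\cap C$, contradicting $\pi(x)\in\mf{h}$. The main obstacle is supplying the chain lower bound inside $\mscr{G}_x$, which is precisely where the hypothesis on $C$ enters.
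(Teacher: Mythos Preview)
Your proof is correct and follows essentially the same strategy as the paper: both directions are handled identically in spirit, with Lemma~\ref{nonempty intersection criterion} providing the gate in the ``if'' direction. The only organizational difference is that the paper applies Lemma~\ref{nonempty intersection criterion} in $M$ to the full ultrafilter $\mc{H}_x=\s_C\cup(\s_x\setminus\mscr{H}(C|x))$, which pins down $\pi(x)$ uniquely and makes the gate property immediate, whereas you apply it inside $C$ to the smaller family $\{\mf{h}\cap C\mid\mf{h}\in\mscr{G}_x\}$ and then verify $\mscr{H}(x|\pi(x))\cu\mscr{H}(x|C)$ by hand; both routes are equally valid.
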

\begin{proof}
Suppose that $C$ is gate-convex. If $\mscr{C}\cu\mscr{H}_C(M)$ is a chain and $x\in\bigcap\mscr{C}$, then the gate-projection of $x$ to $C$ also lies in $\bigcap\mscr{C}$. Thus either $\bigcap\mscr{C}=\emptyset$ or $C\cap\bigcap\mscr{C}\neq\emptyset$.

Conversely, suppose that $C$ is convex and that there does not exist a chain $\mscr{C}\cu\mscr{H}_C(M)$ such that $\bigcap\mscr{C}$ is nonempty and disjoint from $C$. We define a map $\pi\colon M\ra C$ as follows.

Let $\s_C\cu\mscr{H}(M)$ be the set of halfspaces containing $C$. For every $x\in M$, consider the set:
\[\mc{H}_x=\s_C\cup(\s_x\setminus\mscr{H}(C|x)).\]
Note that any two halfspaces $\mf{h},\mf{k}\in\mc{H}_x$ have nonempty intersection. This is clear if they both lie in $\s_C$ or $\s_x$; if instead $\mf{h}\in\s_C$ and $\mf{k}\in\s_x\setminus\mscr{H}(C|x)$, this follows from the fact that $C\cu\mf{h}$ and $\mf{k}\cap C\neq\emptyset$. Also note that, for every $\mf{j}\in\mscr{H}(M)$, either $\mf{j}$ or $\mf{j}^*$ lies in $\mc{H}_x$. Indeed, assuming that $x\in\mf{j}$, either $\mf{j}\cap C\neq\emptyset$ and $\mf{j}\in\s_x\setminus\mscr{H}(C|x)\cu\mc{H}_x$, or $C\cu\mf{j}^*$ and $\mf{j}^*\in\s_C\cu\mc{H}_x$. 

In conclusion, $\mc{H}_x$ is an ultrafilter. In addition, let us show that every chain $\mscr{C}\cu\mc{H}_x$ admits a lower bound in $\mc{H}_x$. First, $\mscr{C}$ contains a cofinal subset $\mscr{C}'$ that is contained in either $\s_C$ or $\s_x\setminus\mscr{H}(C|x)$. If $\mscr{C}'\cu\s_C$, then $C\cu\bigcap\mscr{C}'$. If $\mscr{C}'\cu\s_x\setminus\mscr{H}(C|x)$, then $x\in\bigcap\mscr{C}'$ and, by our assumption on $C$, we have $\bigcap\mscr{C}'\cap C\neq\emptyset$. In both cases, since $\bigcap\mscr{C}=\bigcap\mscr{C}'\neq\emptyset$, Remark~\ref{bigcap chain halfspace} implies that $\bigcap\mscr{C}$ is a halfspace, and we have $\bigcap\mscr{C}\in\mc{H}_x$.

Now, we can apply Lemma~\ref{nonempty intersection criterion} and conclude that $\bigcap\mc{H}_x\neq\emptyset$. Since $\mc{H}_x$ is an ultrafilter, we deduce that this intersection consists of a single point $\pi(x)$. This defines the map $\pi\colon M\ra C$.

Since $\s_C\cu\mc{H}_x$, we have $\pi(x)\in C$. It follows that $\mscr{W}(x|C)=\mscr{W}(x|\pi(x))$ for all $x\in M$. Hence $\pi(x)\in I(x,y)$ for every $x\in M$ and $y\in C$, showing that $C$ is gate-convex.
\end{proof}

\subsection{Products.}

Given two median algebras $M_1$ and $M_2$, we denote by $M_1\x M_2$ their \emph{product}. This is the only median algebra with underlying set $M_1\x M_2$ such that the coordinate projections to $M_1$ and $M_2$ are median morphisms. 

We will need the following result. When $M$ is the vertex set of a $\CAT$ cube complex, this is \cite[Lemma~2.5]{CS}. When $M$ is the underlying median algebra of a complete median space, compare \cite[Proposition~2.10]{Fio2}\footnote{In particular, the equivalence of parts~(1) and~(2) in \cite[Proposition~2.10]{Fio2} does not require a metric, or measurability of the partition. By contrast, we stress that the equivalence with part~(3) of \cite[Proposition~2.10]{Fio2} does require a \emph{complete} metric. Without it, a counterexample is provided by $[0,1]^2\setminus\{(0,0)\}$ with the $\ell^1$ metric.}. In full generality, we will rely on Lemma~\ref{nonempty intersection criterion}.
% important difference, no source of gate-convexity in our context

\begin{lem}\label{product median algebras}
For a finite-rank median algebra $M$, the following are equivalent:
\begin{enumerate}
\item $M$ splits as a nontrivial product of median algebras $M_1\x M_2$;
\item there exists a nontrivial partition $\mscr{W}(M)=\mc{W}_1\sqcup\mc{W}_2$ such that $\mc{W}_1$ and $\mc{W}_2$ are transverse.
\end{enumerate}
When this happens, each set $\mc{W}_i$ is naturally identified with $\mscr{W}(M_i)$.
\end{lem}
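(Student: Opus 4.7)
The plan is to produce the splitting $M \simeq M_1 \x M_2$ in the direction (2) $\Ra$ (1) by setting $M_i := M(\mc{W}_i)$ as in Subsection~\ref{restriction quotients sect} and showing that the diagonal map $\pi := (\pi_1, \pi_2) \colon M \ra M_1 \x M_2$ is a median isomorphism. For the opposite direction (1) $\Ra$ (2), I take $\mc{W}_i$ to consist of the walls pulled back from $\mscr{W}(M_i)$ along the coordinate projection (using Remark~\ref{median homo rmk}); transversality across $\mc{W}_1$ and $\mc{W}_2$ is transparent, since $(\mathfrak{a} \x M_2) \cap (M_1 \x \mathfrak{b}) = \mathfrak{a} \x \mathfrak{b}$ is nonempty for any nonempty $\mathfrak{a}, \mathfrak{b}$. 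The real content of direction (1) $\Ra$ (2) is to show that $\mc{W}_1 \cup \mc{W}_2 = \mscr{W}(M)$: given a halfspace $\mathfrak{h} \cu M_1 \x M_2$ with chosen points $(a,b) \in \mathfrak{h}$ and $(c,d) \in \mathfrak{h}^*$, examining whether the auxiliary point $(a,d)$ lies in $\mathfrak{h}$ or in $\mathfrak{h}^*$ and applying convexity to coordinatewise medians forces $\mathfrak{h}$ to depend on only one coordinate, so it is pulled back from either $M_1$ or $M_2$.

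For (2) $\Ra$ (1), injectivity of $\pi$ is immediate from $\mc{W}_1 \cup \mc{W}_2 = \mscr{W}(M)$: two distinct points of $M$ are separated by some wall (\cite[Theorem~2.7]{Roller}), which lies in $\mc{W}_1$ or $\mc{W}_2$, so $\pi_1$ or $\pi_2$ distinguishes them. For surjectivity, given $(a,b) \in M_1 \x M_2$, I pick lifts $x_0 \in \pi_1^{-1}(a)$ and $y_0 \in \pi_2^{-1}(b)$ and apply Lemma~\ref{nonempty intersection criterion} to the family
\[
\mc{H} := \{\mathfrak{h} \in \mscr{H}(M) : \{\mathfrak{h},\mathfrak{h}^*\} \in \mc{W}_1, \ x_0 \in \mathfrak{h}\} \ \cup \ \{\mathfrak{h} \in \mscr{H}(M) : \{\mathfrak{h},\mathfrak{h}^*\} \in \mc{W}_2, \ y_0 \in \mathfrak{h}\}.
\]
Any $z \in \bigcap \mc{H}$ is $\sim_{\mc{W}_1}$--equivalent to $x_0$ and $\sim_{\mc{W}_2}$--equivalent to $y_0$, so $\pi(z) = (a,b)$. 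Pairwise intersections within $\mc{H}$ are easy: two halfspaces in the same part share $x_0$ or $y_0$, while two halfspaces in different parts are transverse by hypothesis and hence meet.

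The hard part will be verifying the chain condition of Lemma~\ref{nonempty intersection criterion}. Any $\mathfrak{h} \in \mc{W}_1$ and $\mathfrak{k} \in \mc{W}_2$ are transverse, hence incomparable, so a chain $\mscr{C} \cu \mc{H}$ must lie entirely in one of the two parts, say the $\mc{W}_1$--part. Then $\mathfrak{h}_* := \bigcap \mscr{C}$ contains $x_0$, hence is nonempty, and is a halfspace by Remark~\ref{bigcap chain halfspace}; it suffices to show that its wall lies in $\mc{W}_1$, for then $\mathfrak{h}_* \in \mc{H}$ is the sought-after lower bound. I plan to rule out the alternative by contradiction: if the wall of $\mathfrak{h}_*$ were in $\mc{W}_2$, then either $\mathfrak{h}_* = \mathfrak{l}$ or $\mathfrak{h}_*^* = \mathfrak{l}$ for some $\mc{W}_2$--halfspace $\mathfrak{l}$. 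In the first case $\mathfrak{l} = \bigcap \mscr{C} \cu \mathfrak{h}_\alpha$ for every $\mathfrak{h}_\alpha \in \mscr{C}$, forcing $\mathfrak{l} \cap \mathfrak{h}_\alpha^* = \emptyset$; in the second $\mathfrak{l}^* = \mathfrak{h}_* \cu \mathfrak{h}_\alpha$ forces $\mathfrak{l}^* \cap \mathfrak{h}_\alpha^* = \emptyset$. Either conclusion contradicts transversality of $\mathfrak{h}_\alpha \in \mc{W}_1$ with $\mathfrak{l} \in \mc{W}_2$. With the hypotheses verified, Lemma~\ref{nonempty intersection criterion} delivers $\bigcap \mc{H} \neq \emptyset$, and the natural identification $\mc{W}_i \simeq \mscr{W}(M_i)$ is built into the restriction-quotient construction.
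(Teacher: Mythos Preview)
Your proof is correct and follows essentially the same strategy as the paper's: the heart of both arguments is an application of Lemma~\ref{nonempty intersection criterion} to the set $\sigma_{x_0}^1\cup\sigma_{y_0}^2$ of halfspaces, and your verification of the chain hypothesis (via transversality ruling out $\mc{W}_2$--walls as intersections of $\mc{W}_1$--chains) is exactly what the paper leaves implicit when it says ``this set satisfies the hypotheses of Lemma~\ref{nonempty intersection criterion}''.

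The one genuine difference is in packaging. The paper fixes a basepoint $x$ and realises the factors concretely as convex subsets $A_x,B_x\cu M$, then builds a map $\phi\colon A_x\times B_x\to M$ and checks by hand that $\phi$ is a median morphism. You instead take the factors to be the restriction quotients $M(\mc{W}_i)$ and build the map in the opposite direction, $\pi\colon M\to M_1\times M_2$; since each $\pi_i$ is already a median morphism, you get this for free and only need bijectivity. Your route is slightly cleaner here, at the cost of invoking the restriction-quotient machinery of Subsection~\ref{restriction quotients sect}. For the final identification $\mc{W}_i\simeq\mscr{W}(M_i)$, note that Subsection~\ref{restriction quotients sect} warns that $\mscr{H}(M(\mc{U}))$ can in general be strictly larger than $\mc{U}$; in your situation this does not happen, but the clean way to see it is to apply your direction (1)$\Rightarrow$(2) to the splitting $M\simeq M_1\times M_2$ just obtained, and compare the resulting transverse partition to the original one. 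For (1)$\Rightarrow$(2) itself, your auxiliary-point sketch works but requires a few more medians than you indicate; the paper's observation that every convex subset of $M_1\times M_2$ equals the product of its coordinate projections gives the same conclusion in one line.
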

\begin{proof}
First, suppose that $M=M_1\x M_2$. Let $\pi_i$ denote the factor projections. For all $x,y\in M$, we have $I(x,y)=I(\pi_1(x),\pi_1(y))\x I(\pi_2(x),\pi_2(y))$. Thus, every convex set $C\cu M$ is necessarily of the form $C_1\x C_2$, where each $C_i\cu M_i$ is convex. It follows that every halfspace of $M$ is either of the form $\mf{h}_1\x M_2$ with $\mf{h}_1\in\mscr{H}(M_1)$, or of the form $M_1\x\mf{h}_2$ with $\mf{h}_2\in\mscr{H}(M_2)$. This proves the implication (1)$\Ra$(2) and the final statement of the lemma. 

We are left to prove that (2)$\Ra$(1). The partition $\mscr{W}(M)=\mc{W}_1\sqcup\mc{W}_2$ determines a nontrivial, transverse partition $\mscr{H}(M)=\mc{H}_1\sqcup\mc{H}_2$, where $\mc{H}_i$ is the set of halfspaces associated to $\mc{W}_i$. Given $x\in M$, recall that $\s_x$ denotes the set of halfspaces containing $x$. Setting $\s_x^i:=\s_x\cap\mc{H}_i$, we obtain a transverse partition $\s_x=\s_x^1\sqcup\s_x^2$. 

Let $A_x,B_x\cu M$ denote the intersection of all halfspaces in $\s_x^2$ and $\s_x^1$, respectively. These are convex subsets with $A_x\cap B_x=\{x\}$. Given $y\in A_x$ and $z\in B_x$, the intersection of all halfspaces in $\s_y^1\sqcup\s_z^2$ is nonempty, since this set satisfies the hypotheses of Lemma~\ref{nonempty intersection criterion}. Moreover, the intersection must consist of a single point, since $\s_y^1\sqcup\s_z^2$ contains a side of every wall of $M$. We denote this point by $\phi(y,z)$. This defines a map $\phi\colon A_x\x B_x\ra M$. 

If $(y,z),(y',z')\in A_x\x B_x$ are distinct, then either $y$ and $y'$ are separated by a wall in $\mc{W}_1$, or $z$ and $z'$ are separated by a wall in $\mc{W}_2$. The same wall will separate $\phi(y,z)$ and $\phi(y',z')$, hence $\phi$ is injective. Given $w\in M$, the argument of the previous paragraph also shows that there exists a point $y_w$ lying in the intersection of the halfspaces in $\s_w^1\sqcup\s_x^2$; note that, in particular, we have $y_w\in A_x$. Similarly, there exists $z_w\in B_x$ lying in every halfspace of the set $\s_x^1\sqcup\s_w^2$. Observing that $w=\phi(y_w,z_w)$, we conclude that $\phi$ is also surjective. Thus, $\phi$ is a bijection.

Finally, we show that $\phi$ is a morphism of median algebras. If this were not the case, there would exist $y_1,y_2,y_3\in A_x$ and $z_1,z_2,z_3\in B_x$ such that the points
\begin{align*}
&m(\phi(y_1,z_1),\phi(y_2,z_2),\phi(y_3,z_3)) &\text{ and }& &\phi(m(y_1,y_2,y_3),m(z_1,z_2,z_3))
\end{align*}
are distinct. Hence there would exist a halfspace $\mf{h}\in\mscr{H}(M)$ containing the latter point, but not the former. Without loss of generality $\mf{h}\in\mc{H}_1$. Then $m(y_1,y_2,y_3)\in\mf{h}$, while at least two of the three points $\phi(y_1,z_1),\phi(y_2,z_2),\phi(y_3,z_3)$ must lie in $\mf{h}^*$. Since $\mf{h}\in\mc{H}_1$, the latter implies that at least two among $y_1,y_2,y_3$ lie in $\mf{h}^*$, contradicting the fact that $m(y_1,y_2,y_3)\in\mf{h}$. 
\end{proof}

We say that $M$ is \emph{irreducible} if it is not isomorphic to a nontrivial product $M_1\x M_2$. It is standard to deduce the following from Lemma~\ref{product median algebras} (cf.\ \cite[Proposition~2.6]{CS} or \cite[Proposition~2.12]{Fio2}).

\begin{cor}\label{product cor}
Let $M$ be a median algebra of rank $r<+\infty$. Then, there exist irreducible median algebras $M_1,\dots,M_k$ (unique up to permutation) such that $M\simeq M_1\x\dots\x M_k$ and $k\leq r$. The group $\aut M$ preserves this splitting, possibly permuting the factors.
\end{cor}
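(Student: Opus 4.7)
The plan is to produce the decomposition by partitioning the set of walls into equivalence classes under the equivalence relation generated by non-transversality, and then to invoke Lemma~\ref{product median algebras} iteratively. More precisely, I would declare two walls $\mf{w},\mf{w}'\in\mscr{W}(M)$ to be related if they are not transverse, and then let $\approx$ be the equivalence relation this generates. Let $\mc{W}_1,\dots,\mc{W}_k$ denote its equivalence classes. By construction, walls in distinct classes must be transverse, so for each nonempty proper $I\subsetneq\{1,\dots,k\}$ the partition $\mscr{W}(M)=(\bigsqcup_{i\in I}\mc{W}_i)\sqcup(\bigsqcup_{i\notin I}\mc{W}_i)$ is transverse. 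Applying Lemma~\ref{product median algebras} and iterating (say, peeling off one class at a time) yields a splitting $M\simeq M_1\x\dots\x M_k$ with $\mscr{W}(M_i)$ canonically identified with $\mc{W}_i$.

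To show that each $M_i$ is irreducible, I would use Lemma~\ref{product median algebras} in the other direction: a nontrivial splitting $M_i\simeq A\x B$ would yield a nontrivial transverse partition of $\mscr{W}(M_i)=\mc{W}_i$, which is incompatible with $\mc{W}_i$ being a single $\approx$--equivalence class (one cannot connect a wall of $A$ to a wall of $B$ via a chain of pairwise non-transverse walls, as consecutive walls in any such chain would have to lie in the same factor). The bound $k\leq r$ is immediate: picking one wall from each $\mc{W}_i$ gives $k$ pairwise-transverse walls, so $k\leq\rk M=r$ by the characterisation of rank in Remark~\ref{definitions of rank}.

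For uniqueness, suppose $M\simeq N_1\x\dots\x N_l$ with each $N_j$ irreducible. The final statement of Lemma~\ref{product median algebras} identifies $\mscr{W}(N_j)$ with a subset of $\mscr{W}(M)$, giving a transverse partition $\mscr{W}(M)=\bigsqcup_j\mscr{W}(N_j)$. Irreducibility of $N_j$ and Lemma~\ref{product median algebras} prevent any further transverse splitting of $\mscr{W}(N_j)$, so in particular $\mscr{W}(N_j)$ cannot meet two distinct $\approx$--classes; thus each $\mscr{W}(N_j)$ is contained in some $\mc{W}_{\pi(j)}$. Conversely, any two walls of $M$ lying in different $\mscr{W}(N_j)$'s are transverse, so a chain of pairwise non-transverse walls cannot leave a single $\mscr{W}(N_j)$; hence each $\mc{W}_i$ is contained in a single $\mscr{W}(N_{\pi^{-1}(i)})$. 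This forces $k=l$ and $\mc{W}_i=\mscr{W}(N_{\pi^{-1}(i)})$, giving the desired permutation of factors.

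Finally, invariance under $\aut M$ is automatic: the partition $\{\mc{W}_1,\dots,\mc{W}_k\}$ is defined purely from the transversality relation on $\mscr{W}(M)$, which is intrinsic to the median-algebra structure. Any $g\in\aut M$ therefore permutes the $\mc{W}_i$, and hence permutes the corresponding factors $M_i$ under the identification $M\simeq M_1\x\dots\x M_k$. I do not anticipate a main obstacle; the one point requiring a little care is verifying that the iterated application of Lemma~\ref{product median algebras} is consistent, i.e.\ that after splitting off one factor $M_i$ the induced transverse partition on $\mscr{W}$ of the complementary factor still matches the remaining classes $\mc{W}_j$, $j\neq i$, but this follows directly from the last sentence of Lemma~\ref{product median algebras}.
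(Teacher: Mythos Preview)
Your proposal is correct; the paper itself does not spell out a proof but simply declares the deduction from Lemma~\ref{product median algebras} to be standard, citing \cite[Proposition~2.6]{CS} and \cite[Proposition~2.12]{Fio2}. Your argument via the equivalence relation on $\mscr{W}(M)$ generated by non-transversality is exactly the standard route taken in those references.
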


\subsection{An eierlegende Wollmilchsau.}\label{wollmilchsau sect}

For groups that are not \emph{finitely generated}, Corollary~\ref{flattorus cor} and Theorem~\ref{cores intro}(2) fail, and so do essentially all other results of this paper that make a finite generation assumption. As suggested to us by the referee, the following is the shared counterexample for all these results, based on a classical construction due to Serre (see e.g.\ \cite[Theorem~15(3)]{Serre}). 

We discuss each result using the appropriate terminology and notation, which might not have been introduced yet. We encourage the reader to return to this subsection when appropriate.

\begin{ex}\label{wollmilchsau}
Let $G$ be a group that is not finitely generated. 

Choose an enumeration $G=\{g_n\mid n\in\N\}$ and consider the subgroups $G_n:=\langle g_0,\dots,g_n\rangle$. Form a tree $T$ with vertex set $\bigsqcup_{n\in\N} G/G_n$ and edges corresponding to inclusions of cosets of $G_n$ into cosets of $G_{n+1}$ for $n\in\N$. We have an action $G\acts T$ that on $T^{(0)}$ is simply left multiplication of cosets.

Here are the results that fail for this action.
\begin{itemize}
\item Corollary~\ref{flattorus cor}. The group $G$ cannot stabilise a subset $L\cu T$ isometric to a subset of $\R$ (not even when $G$ is abelian). Indeed, $T$ does not contain any subsets isometric to $\R$, so $L$ would have to be isometric to a \emph{proper} subset of $\R$. In particular, $G$ would need to fix $L$ pointwise, although $G$ has no fixed points in $T$.
\item Theorem~\ref{cores intro}(2) and Proposition~\ref{H=H_1}. There are no nonempty, $G$--invariant, convex subsets of $T$ on which $G$ acts essentially (for instance, by Remark~\ref{halfspaces of subsets} and the fact that $\mc{H}_1(G)=\emptyset$).
\item Theorem~\ref{abstract core}, Corollary~\ref{cores intersect subalgebras}. We have $\mscr{H}(T)=\mc{H}_{1/2}(G)\sqcup\mc{H}_{1/2}(G)^*$, so $\mc{C}(G)=\overline{\mc{C}}(G)=\emptyset$.
\item Lemma~\ref{efficient facing}. For every $n$, there is an edge $e_n\cu T$ with stabiliser $G_n$. Let $\mf{h}_n$ be a corresponding halfspace in $\mc{H}_{1/2}(G)$. Since the $G_n$ exhaust $G$, no finite subset $F\cu G$ contains elements $f_n$ such that $\mf{h}_n$ and $f_n\mf{h}_n$ are facing for all $n\in\N$.
\item Lemma~\ref{previously part of the proof}. Every ray in $T$ gives chains in $\mc{H}_{1/2}(G)$ with empty intersection.
\item Lemma~\ref{halfspaces of core}(2). For this, we need to consider the induced $G$--action on the Roller compactification $\overline T=T\sqcup\{\xi\}$. The reader can think of $\xi$ as the point at infinity determined by all rays in $T$ (which are pairwise asymptotic), though this is irrelevant here. 

The median algebra structure on $\overline T$ is completely determined by the fact that $T$ is a convex subset. The $G$--action on $T$ extends to an action by median automorphisms on $\overline T$ fixing $\xi$. The set $\mscr{H}(\overline T)$ is naturally identified with the union $\mscr{H}(T)\sqcup\{\mf{h},\mf{h}^*\}$, where $\mf{h}=\{\xi\}$ and $\mf{h}^*=T$. We have $\mc{C}(G,\overline T)=\overline{\mc{C}}(G,\overline T)=\{\xi\}$, hence $\mscr{H}_{\mc{C}(G)}(\overline T)=\mscr{H}_{\overline{\mc{C}}(G)}(\overline T)=\emptyset$. On the other hand, $\mc{H}_0(G,\overline T)=\overline{\mc{H}}_0(G,\overline T)=\{\mf{h},\mf{h}^*\}\neq\emptyset$, contradicting the conclusion of the lemma.
\item Proposition~\ref{finite orbit new prop}. We have $\mc{H}_1(G)=\emptyset$, but every $G$--orbit in $T$ is infinite.
\end{itemize}
\end{ex}

The only other result of the paper making a finite generation assumption is Remark~\ref{factors through finite group}. For that, a counterexample in the absence of finite generation is given by the left-multiplication action of $\bigoplus_{\N}\Z/2\Z$ on itself, equipped with the coordinate-wise median operator.

\section{Automorphisms of finite-rank median algebras.}\label{main sect}

This section constitutes the heart of the paper. We will define \emph{convex cores} in Subsection~\ref{convex cores sect}, where we also prove Theorem~\ref{cores intro}. We will then study semisimplicity of automorphisms in Subsection~\ref{ss sect}, proving Corollary~\ref{ss cor intro}, which in turn implies Corollary~\ref{main median semisimple}. We conclude the section by studying non-transverse automorphisms in Subsection~\ref{non-transverse sect}.

Throughout Section~\ref{main sect}, we fix a median algebra $M$ of rank $r<+\infty$, a group $G$, and an action $G\acts M$ by median automorphisms.

\subsection{Dynamics of halfspaces.}\label{dynamics sect}

\begin{defn}
We say that two halfspaces $\mf{h},\mf{k}\in\mscr{H}(M)$ are \emph{facing} if $\mf{h}^*\cap\mf{k}^*=\emptyset$ and $\mf{h}\neq\mf{k}^*$.
\end{defn}

The following $G$--invariant subsets of $\mscr{H}(M)$ will play a fundamental role throughout Section~\ref{main sect}. 
\begin{align*}
\mc{H}_1(G):=&\{\mf{h}\in\mscr{H}(M) \mid \exists g\in G \text{ such that } g\mf{h}\subsetneq\mf{h}\} , \\ 
\mc{H}_0(G):=&\{\mf{h}\in\mscr{H}(M) \mid \text{the orbit $G\cdot\mf{h}$ is finite}\}, \\
\mc{H}_{1/2}(G):=&\{\mf{h}\in\mscr{H}(M)\setminus\mc{H}_0(G) \mid \forall g\in G, \text{ $g\mf{h}$ and $\mf{h}$ are either facing, transverse, or equal} \}, \\ % inversions of \mf{h} are indeed ruled out here
\overline{\mc{H}}_0(G):=&\{\mf{h}\in\mscr{H}(M) \mid \forall g\in G \text{ either $g\mf{h}\in\{\mf{h},\mf{h}^*\}$ or $g\mf{h}$ and $\mf{h}$ are transverse}\}, \\
\overline{\mc{H}}_{1/2}(G):=&\{\mf{h}\in\mscr{H}(M)\setminus\mc{H}_1(G) \mid \exists g\in G \text{ such that $g\mf{h}$ and $\mf{h}$ are facing}\}.
\end{align*}
We chose this notation by analogy with the terminology of Caprace and Sageev from \cite[Subsection~3.3]{CS}; we will explain the connection in detail in Remark~\ref{explanation of 0,1,1/2}. When it is necessary to specify the median algebra acted upon by $G$, we will also write $\mc{H}_{\bullet}(G,M)$ and $\overline{\mc{H}}_{\bullet}(G,M)$. 

Note that $\mc{H}_0(G)^*=\mc{H}_0(G)$, $\overline{\mc{H}}_0(G)^*=\overline{\mc{H}}_0(G)$ and $\mc{H}_1(G)^*=\mc{H}_1(G)$.

\begin{rmk}\label{0 bar rmk}
Since $\rk M=r$, the $G$--orbit of any halfspace $\mf{h}\in\overline{\mc{H}}_0(G)$ contains at most $2r$ halfspaces. In particular, we have $\overline{\mc{H}}_0(G)\cu\mc{H}_0(G)$ and it follows that $\mc{H}_{1/2}(G)\cu\overline{\mc{H}}_{1/2}(G)$.
\end{rmk}

\begin{rmk}\label{1/2 bar PWI}
The halfspaces in $\overline{\mc{H}}_{1/2}(G)$ pairwise intersect. Indeed, suppose for the sake of contradiction that there exist $\mf{h},\mf{k}\in\overline{\mc{H}}_{1/2}(G)$ with $\mf{h}\cap\mf{k}=\emptyset$. Choose $g_1,g_2\in G$ such that $g_1\mf{h}$ and $\mf{h}$ are facing, and $g_2\mf{k}$ and $\mf{k}$ are facing. Then $g_2\mf{h}\cu g_2\mf{k}^*\subsetneq\mf{k}\cu\mf{h}^*\subsetneq g_1\mf{h}$. Hence $g_1^{-1}g_2\mf{h}\subsetneq\mf{h}$, contradicting the assumption that $\mf{h}\not\in\mc{H}_1(G)$.
\end{rmk}

\begin{rmk}\label{7 possibilities}
Given two halfspaces $\mf{h},\mf{k}\in\mscr{H}(M)$, exactly seven possibilities may occur as to their relative position. We might have $\mf{h}=\mf{k}$, $\mf{h}=\mf{k}^*$, $\mf{h}\subsetneq\mf{k}$, $\mf{k}\subsetneq\mf{h}$, $\mf{h}$ and $\mf{k}$ might be facing, $\mf{h}^*$ and $\mf{k}^*$ might be facing, or, finally, $\mf{h}$ and $\mf{k}$ might be transverse.
\end{rmk}

Since $\overline{\mc{H}}_{1/2}(G)$ is $G$--invariant, the following is immediate from Remarks~\ref{1/2 bar PWI} and~\ref{7 possibilities}:

\begin{rmk}\label{1/2 bar rmk}
If $\mf{h}\in\overline{\mc{H}}_{1/2}(G)$, then there does not exist $g_1\in G$ such that $g_1\mf{h}=\mf{h}^*$, nor does there exist $g_2\in G$ such that $g_2\mf{h}^*$ and $\mf{h}^*$ are facing. In particular, we have $\overline{\mc{H}}_{1/2}(G)\cu\mc{H}_0(G)\sqcup\mc{H}_{1/2}(G)$.
\end{rmk}

\begin{lem}\label{G and h}
We have $G$--invariant partitions:
\begin{align*}
\mscr{H}(M)&=\mc{H}_1(G)\sqcup\mc{H}_0(G)\sqcup\mc{H}_{1/2}(G)\sqcup\mc{H}_{1/2}(G)^*, \\
\mscr{H}(M)&=\mc{H}_1(G)\sqcup\overline{\mc{H}}_0(G)\sqcup\overline{\mc{H}}_{1/2}(G)\sqcup\overline{\mc{H}}_{1/2}(G)^*,
\end{align*}
where $\mc{H}_{1/2}(G)\cu\overline{\mc{H}}_{1/2}(G)\cu\mc{H}_0(G)\sqcup\mc{H}_{1/2}(G)$ and $\overline{\mc{H}}_0(G)\cu\mc{H}_0(G)$.
\end{lem}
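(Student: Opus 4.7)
The claimed inclusions $\overline{\mc{H}}_0(G)\cu\mc{H}_0(G)$, $\mc{H}_{1/2}(G)\cu\overline{\mc{H}}_{1/2}(G)$, and $\overline{\mc{H}}_{1/2}(G)\cu\mc{H}_0(G)\sqcup\mc{H}_{1/2}(G)$ are essentially contained in Remarks~\ref{0 bar rmk} and~\ref{1/2 bar rmk}, and the $G$-invariance of all six sets is immediate from their definitions. It is also convenient to note at the outset that $\mc{H}_1(G)\cap\mc{H}_0(G)=\emptyset$ (since $g\mf{h}\subsetneq\mf{h}$ forces the iterates $g^n\mf{h}$ to be pairwise distinct) and $\mc{H}_1(G)^*=\mc{H}_1(G)$ (take complements in $g\mf{h}\subsetneq\mf{h}$). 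What remains is the covering and disjointness of the two partitions.

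The workhorse is Remark~\ref{7 possibilities}. If $\mf{h}\not\in\mc{H}_1(G)$, then no $g\in G$ realises $g\mf{h}\subsetneq\mf{h}$ or $g\mf{h}\supsetneq\mf{h}$, so each translate $g\mf{h}$ falls into exactly one of: $g\mf{h}=\mf{h}$; $g\mf{h}=\mf{h}^*$ (an \emph{inversion}); $g\mf{h}$ facing $\mf{h}$ (equivalently $g\mf{h}^*\subsetneq\mf{h}$); ``cofacing'', meaning $g\mf{h}\cap\mf{h}=\emptyset$ with $g\mf{h}\neq\mf{h}^*$ (equivalently $g\mf{h}\subsetneq\mf{h}^*$, i.e.\ $\mf{h}^*$ and $g\mf{h}^*$ are facing); or $g\mf{h}$ transverse to $\mf{h}$. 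The covering in the second partition reads off directly: $\overline{\mc{H}}_0(G)$ is exactly ``no facing and no cofacing translate occurs''; $\overline{\mc{H}}_{1/2}(G)$ is ``at least one facing translate occurs''; and $\overline{\mc{H}}_{1/2}(G)^*$ is ``at least one cofacing translate occurs''. For the first partition's covering, I must additionally exclude inversions whenever $\mf{h}\not\in\mc{H}_0(G)\cup\mc{H}_1(G)$: if $g_0\mf{h}=\mf{h}^*$ (so also $g_0^{-1}\mf{h}=\mf{h}^*$), then applying $g_0^{-1}$ to any facing inclusion $g_1\mf{h}^*\subsetneq\mf{h}$ yields $g_0^{-1}g_1\mf{h}^*\subsetneq\mf{h}^*$, and applying $g_0^{-1}$ to any cofacing inclusion $g_2\mf{h}\subsetneq\mf{h}^*$ yields $g_0^{-1}g_2\mf{h}\subsetneq\mf{h}$; either outcome would place $\mf{h}\in\mc{H}_1(G)$ (using $\mc{H}_1(G)=\mc{H}_1(G)^*$ in the first case). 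So no facing or cofacing translate exists, forcing $\mf{h}\in\overline{\mc{H}}_0(G)\cu\mc{H}_0(G)$, contradicting $\mf{h}\not\in\mc{H}_0(G)$. With inversions excluded, the same case analysis places $\mf{h}$ in $\mc{H}_{1/2}(G)\cup\mc{H}_{1/2}(G)^*$.

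Most of the required disjointness relations follow immediately from the definitions together with the above inclusions and the $*$-invariance of $\mc{H}_0(G)$, $\overline{\mc{H}}_0(G)$, and $\mc{H}_1(G)$. The one subtle step, which I expect to be the main obstacle, is $\overline{\mc{H}}_{1/2}(G)\cap\overline{\mc{H}}_{1/2}(G)^*=\emptyset$ (which via $\mc{H}_{1/2}(G)\cu\overline{\mc{H}}_{1/2}(G)$ also yields $\mc{H}_{1/2}(G)\cap\mc{H}_{1/2}(G)^*=\emptyset$). If both memberships held for some $\mf{h}$, unwinding the definitions produces $g_1,g_2\in G$ with $g_1\mf{h}^*\subsetneq\mf{h}$ (facing) and $g_2\mf{h}\subsetneq\mf{h}^*$ (cofacing); applying $g_1$ to the second inclusion gives $g_1 g_2\mf{h}\subsetneq g_1\mf{h}^*\subsetneq\mf{h}$, so $\mf{h}\in\mc{H}_1(G)$, contradicting $\overline{\mc{H}}_{1/2}(G)\cap\mc{H}_1(G)=\emptyset$. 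All remaining combinatorial bookkeeping is routine.
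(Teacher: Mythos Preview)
Your proof is correct and follows essentially the same approach as the paper's. The paper's argument is terser, deferring the key step---the disjointness $\overline{\mc{H}}_{1/2}(G)\cap\overline{\mc{H}}_{1/2}(G)^*=\emptyset$---to the preceding Remarks~\ref{1/2 bar PWI} and~\ref{1/2 bar rmk}, whereas you reprove this directly via the chain $g_1g_2\mf{h}\subsetneq g_1\mf{h}^*\subsetneq\mf{h}$; but this is the same argument. Your explicit exclusion of inversions for the first partition is likewise what the paper obtains by combining the second partition with $\overline{\mc{H}}_0(G)\cu\mc{H}_0(G)$ and $\overline{\mc{H}}_{1/2}(G)\cu\mc{H}_0(G)\sqcup\mc{H}_{1/2}(G)$.
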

\begin{proof}
The second partition is immediate from Remark~\ref{7 possibilities}. Regarding the first, note that the sets $\mc{H}_{1/2}(G)$ and $\mc{H}_{1/2}(G)^*$ are disjoint since, as shown in Remark~\ref{0 bar rmk}, we have $\mc{H}_{1/2}(G)\cu\overline{\mc{H}}_{1/2}(G)$ and $\mc{H}_{1/2}(G)^*\cu\overline{\mc{H}}_{1/2}(G)^*$. Thus it is also clear that the sets $\mc{H}_1(G)$, $\mc{H}_0(G)$, $\mc{H}_{1/2}(G)$ and $\mc{H}_{1/2}(G)^*$ are pairwise disjoint. The fact that these four sets cover $\mscr{H}(M)$, as well as the rest of the lemma, follow from Remarks~\ref{0 bar rmk} and~\ref{1/2 bar rmk}.
\end{proof}

\begin{rmk}\label{G and h rmk}
If $H\leq G$ is a finite-index subgroup, then it is clear that $\mc{H}_0(H)=\mc{H}_0(G)$ and $\mc{H}_1(H)=\mc{H}_1(G)$. It follows that we have $\mc{H}_{1/2}(H)=\mc{H}_{1/2}(G)$. The sets $\overline{\mc{H}}_0(H)$ and $\overline{\mc{H}}_{1/2}(H)$, however, will differ from $\overline{\mc{H}}_0(G)$ and $\overline{\mc{H}}_{1/2}(G)$ in general.
\end{rmk}

When $G=\langle g\rangle$, we simply write $\mc{H}_{\bullet}(g,M)$ or $\mc{H}_{\bullet}(g)$, rather than $\mc{H}_{\bullet}(\langle g\rangle,M)$. In this case, we can give a simpler characterisation of the sets $\mc{H}_{\bullet}(g)$ as follows:
\begin{itemize}
\item $\mf{h}\in\mc{H}_0(g)$ if and only if there exists $n\in\Z\setminus\{0\}$ such that $g^n\mf{h}=\mf{h}$;
\item $\mf{h}\in\mc{H}_1(g)$ if and only if there exists $n\in\Z$ such that $g^n\mf{h}\subsetneq\mf{h}$;
\item $\mf{h}\in\mc{H}_{1/2}(g)$ if and only if, for every $n\in\Z\setminus\{0\}$, the halfspaces $g^n\mf{h}$ and $\mf{h}$ are either facing or transverse.
\end{itemize}

We conclude this subsection with two results on the trivial case where the $G$--orbit of every halfspace of $M$ is finite. They will prove useful in Subsection~\ref{convex cores sect}.

\begin{lem}\label{finite orbit new lem}
If $\mscr{H}(M)=\mc{H}_0(G)$, then all $G$--orbits in $M$ are finite.
\end{lem}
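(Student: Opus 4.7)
The plan has two main steps: first, use Zorn's lemma together with Lemma~\ref{nonempty intersection criterion} to produce a $\cu$-minimal non-empty $G$-invariant convex subset $C\cu M$; second, show that $C$ must be finite, which immediately yields a finite $G$-orbit inside it.

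For the Zorn step, order the non-empty $G$-invariant convex subsets of $M$ by reverse inclusion. Given a descending chain $\{C_\alpha\}$, each $C_\alpha$ equals the intersection of the halfspaces containing it, so writing $\mc{H}_\infty=\bigcup_\alpha\{\mf{h}\in\mscr{H}(M):C_\alpha\cu\mf{h}\}$ we have $\bigcap_\alpha C_\alpha=\bigcap\mc{H}_\infty$. Any two halfspaces in $\mc{H}_\infty$ both contain some common $C_{\alpha_0}$ from the chain, so pairwise intersect. For any chain $\mathscr{C}\cu\mc{H}_\infty$, Remark~\ref{bigcap chain halfspace} ensures $\bigcap\mathscr{C}$ is a halfspace once non-emptiness is verified, and a cofinality argument (reindexing $\{C_\alpha\}$ by a well-ordering and matching it to the chain $\mathscr{C}$) places this halfspace inside $\mc{H}_\infty$. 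Lemma~\ref{nonempty intersection criterion} then gives $\bigcap\mc{H}_\infty\neq\emptyset$, so Zorn produces the minimal $C$.

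To show $C$ is finite, I first note that it suffices to find a single point $x\in C$ with finite $G$-orbit: the median hull $\hull(G\cdot x)$ is finite by \cite[Lemma~4.2]{Bow-cm}, it is $G$-invariant and contained in $C$, so by minimality it equals $C$, forcing $C$ to be finite. If $\mscr{H}_C(M)=\emptyset$ then $C$ is a single point and we are done. Otherwise, pick any $\mf{h}\in\mscr{H}_C(M)$; the orbit $\mc{U}=G\cdot\{\mf{h},\mf{h}^*\}$ is finite by hypothesis, so $M(\mc{U})$ is a finite median algebra on which $G$ acts by median automorphisms. Applying Lemma~\ref{from finite orbit to finite cube 1} produces a $G$-invariant finite subcube $K\cu M(\mc{U})$. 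Iterating this refinement across different $G$-orbits of halfspaces in $\mscr{H}_C(M)$ and exploiting that, by minimality, no proper $G$-invariant convex subset of $C$ exists, should locate a point of $C$ with finite $G$-orbit.

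The main obstacle is converting the finite $G$-invariant subcubes obtained in the various restriction quotients $M(\mc{U})$ into an actual finite orbit inside $C\cu M$. The finite rank of $M$ bounds the combinatorial complexity of the iterative refinement, and the minimality of $C$ rules out descending to proper $G$-invariant convex subsets; nonetheless, the delicate part is arguing that the refinement cannot proceed indefinitely without isolating a single fiber that is itself a finite $G$-orbit.
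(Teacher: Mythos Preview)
Your Zorn step has a genuine gap, and in fact the minimal $G$--invariant convex subset you seek need not exist. Take $M=\R$ with the trivial $G$--action: then $\mscr{H}(M)=\mc{H}_0(G)$ trivially, yet the chain $C_n=[n,+\infty)$ of nonempty $G$--invariant convex subsets has empty intersection. In your notation, $\mc{H}_\infty$ contains the chain $\mathscr{C}=\{[n,+\infty):n\in\N\}$ with $\bigcap\mathscr{C}=\emptyset$, so the hypothesis of Lemma~\ref{nonempty intersection criterion} fails. The vague ``cofinality argument'' cannot rescue this: you never used the hypothesis $\mscr{H}(M)=\mc{H}_0(G)$ in step~1, and without it the conclusion is simply false, so some essential ingredient is missing.

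Your step~2 is also, as you acknowledge, only a sketch: the restriction quotients $M(\mc{U})$ are indeed finite when $\mc{U}$ is a single finite $G$--orbit of walls, but you give no mechanism for lifting a finite orbit in a quotient to one in $M$, and the ``iterating'' across different orbits of halfspaces is not an argument. The paper's proof bypasses both difficulties with a direct rank argument: assuming no finite-index subgroup fixes a point, one shows that for any $x,y\in M$ the set $\mscr{H}(x|y)$ has at most $2r$ minimal and maximal elements (which are pairwise transverse), so a finite-index subgroup $G_1$ preserves $\mscr{H}(x|y)$; this forces $\mscr{W}(x|y)$ to be transverse to $\mscr{W}(x|gx)$ for every $g\in G_1$. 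Iterating produces $r+1$ pairwise-transverse nonempty wall-sets, contradicting $\rk M=r$.
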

\begin{proof}
We begin with an observation. Consider two distinct points $x,y\in M$. Any two minimal elements of $\mscr{H}(x|y)$ are transverse, and so are any two maximal elements. Thus, there are at most $2r$ among minimal and maximal elements of $\mscr{H}(x|y)$, and a finite-index subgroup $G(x,y)\leq G$ leaves each of them invariant. By Remark~\ref{bigcap chain halfspace}, every halfspace of $\mscr{H}(x|y)$ contains a minimal element of $\mscr{H}(x|y)$ and is contained in a maximal element. Thus, $G(x,y)$ preserves the set $\mscr{H}(x|y)$. Hence, for every $g\in G(x,y)$, we have $\mscr{W}(x|y)=\mscr{W}(x,gx|y,gy)$ and $\mscr{W}(x|gx)=\mscr{W}(x,y|gx,gy)$. It follows that $\mscr{W}(x|y)$ and $\mscr{W}(x|gx)$ are transverse for all $g\in G(x,y)$.

Now, suppose for the sake of contradiction that there is an infinite orbit $G\cdot x\cu M$. Choose $g_0\in G$ with $g_0x\neq x$ and set $G_1:=G(x,g_0x)$. Since $G_1$ has finite index in $G$, there exists $g_1\in G_1$ such that $g_1x\neq x$, and we set $G_2:=G_1\cap G(x,g_1x)$. Iterating, we obtain a chain of finite-index subgroups $G_r\leq\dots\leq G_1\leq G$ and elements $g_i\in G_i$ with $g_ix\neq x$ and $G_{i+1}=G_i\cap G(x,g_ix)$. 

Note that, for $i<j$, the sets $\mscr{W}(x|g_ix)$ and $\mscr{W}(x|g_jx)$ are transverse. Indeed, $g_j$ lies in $G_j$, which is contained in $G(x,g_ix)$.

In conclusion, the sets $\mscr{W}(x|g_0x),\dots,\mscr{W}(x|g_rx)$ are nonempty and pairwise transverse. This violates the assumption that $\rk M=r$.
\end{proof}

\begin{rmk}\label{factors through finite group}
If $\mscr{H}(M)=\overline{\mc{H}}_0(G)$ and $G$ is finitely generated, then $G\acts M$ factors through the action of a finite group. 

Indeed, by Remark~\ref{0 bar rmk}, the $G$--stabiliser of any halfspace of $M$ has index $\leq 2r$ in $G$. Since $G$ is finitely generated, it contains only finitely many subgroups of index $\leq 2r$. Their intersection is a finite-index characteristic subgroup $H\lhd G$ that preserves every halfspace of $M$. Hence $H$ fixes every point of $M$, and the $G$--action factors through the finite group $G/H$.
\end{rmk}

\subsection{Some technical results.}

We gather here a few technical results that we do not deem of much interest on their own. They will be needed in the proofs of some results in later sections, but we prove them here as they only require the terminology of Subsection~\ref{dynamics sect}. The reader can safely skip this subsection and return when they encounter a reference to something proved here.

The following is needed to prove Lemma~\ref{H_1 not saturated}, which is used in the proof of Proposition~\ref{H=H_1}.

\begin{rmk}\label{miracle on H_0}
Consider a chain $\mscr{C}\cu\mc{H}_0(G)$ such that $\bigcap\mscr{C}\in\mscr{H}(M)$. Then $\bigcap\mscr{C}\not\in\mc{H}_1(G)$. % not at all trivial, since the least power of g that fixes a halfspace in $\mscr{C}$ might increase indefinitely as we go down $\mscr{C}$

Indeed, set $\mf{k}=\bigcap\mscr{C}$ and suppose for the sake of contradiction that there exists $g\in G$ such that $\mf{k}\subsetneq g\mf{k}$. Pick a point $x\in g\mf{k}\setminus\mf{k}$. Since $x\not\in\mf{k}$, there exists $\mf{h}\in\mscr{C}$ such that $x\not\in\mf{h}$. Since $\mf{h}\in\mc{H}_0(G)$, there exists $m\geq 1$ such that $g^m\mf{h}=\mf{h}$. Then $x\in g\mf{k}\cu g^m\mf{k}\cu g^m\mf{h}=\mf{h}$, a contradiction.
\end{rmk}

\begin{lem}\label{H_1 not saturated}
Suppose that $\mc{H}_{1/2}(G)=\emptyset$. Then, for every $\mf{h}\in\mc{H}_1(G)$ there exist $x\in\mf{h}$ and $y\in\mf{h}^*$ such that $\mscr{H}(x|y)\cu\mc{H}_1(G)$.
\end{lem}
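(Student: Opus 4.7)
The plan is to construct $x\in\mf{h}$ and $y\in\mf{h}^*$ with identical $\mc{H}_0(G)$-traces in their ultrafilters, by applying Lemma~\ref{nonempty intersection criterion} twice.

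First step: set $\mc{L}:=\{\mf{k}\in\mc{H}_0(G) : \mf{k}\subsetneq\mf{h}\}$, and let $\mc{L}^*:=\{\mf{k}^* : \mf{k}\in\mc{L}\}$. I would apply Lemma~\ref{nonempty intersection criterion} to $\mc{F}:=\{\mf{h}\}\cup\mc{L}^*$. Every element of $\mc{L}^*$ strictly contains $\mf{h}^*$, which gives pairwise intersection within $\mc{L}^*$; each also intersects $\mf{h}$, because $\mf{k}^*\cap\mf{h}=\mf{h}\setminus\mf{k}\neq\emptyset$ as $\mf{k}\subsetneq\mf{h}$. A chain containing $\mf{h}$ must reduce to $\{\mf{h}\}$, since no $\mf{k}^*\in\mc{L}^*$ is comparable with $\mf{h}$ (comparability would force $\mf{k}^*=M$ or $\mf{h}^*\subseteq\mf{h}$). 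For a chain $\mscr{C}\cu\mc{L}^*$, the intersection $\mf{m}:=\bigcap\mscr{C}$ contains $\mf{h}^*$ and is a halfspace; by Remark~\ref{miracle on H_0}, combined with $\mc{H}_{1/2}(G)=\emptyset$, it lies in $\mc{H}_0(G)$. The equality $\mf{m}=\mf{h}^*$ is ruled out, since it would place $\mf{h}^*$ in $\mc{H}_0(G)$, contradicting $\mf{h}\in\mc{H}_1(G)$. So $\mf{m}\supsetneq\mf{h}^*$, hence $\mf{m}^*\subsetneq\mf{h}$, and $\mf{m}\in\mc{L}^*$ is the required lower bound. Thus we obtain $x\in\mf{h}$ that avoids every $\mc{H}_0(G)$-halfspace strictly contained in $\mf{h}$.

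Second step: I would produce $y\in\mf{h}^*$ by applying Lemma~\ref{nonempty intersection criterion} to $\mc{F}':=\{\mf{h}^*\}\cup(\sigma_x\cap\mc{H}_0(G))$. For any $\mf{k}\in\sigma_x\cap\mc{H}_0(G)$, the first step rules out $\mf{k}\subsetneq\mf{h}$, while $x\in\mf{k}\cap\mf{h}$ rules out $\mf{k}\cu\mf{h}^*$; hence $\mf{k}$ either strictly contains $\mf{h}$ or $\mf{h}^*$, or is transverse to $\mf{h}$, and in every such case $\mf{k}\cap\mf{h}^*\neq\emptyset$. The chain condition is immediate: any chain in $\sigma_x\cap\mc{H}_0(G)$ has intersection containing $x$, which by Remark~\ref{miracle on H_0} is again a halfspace in $\mc{H}_0(G)\cap\sigma_x$. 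A chain containing $\mf{h}^*$ can only include halfspaces strictly containing $\mf{h}^*$ (the case $\mf{k}\cu\mf{h}^*$ would place $x\in\mf{h}^*$), so $\mf{h}^*$ itself is a lower bound. The resulting $y$ satisfies $\sigma_y\supseteq\sigma_x\cap\mc{H}_0(G)$, and since $\sigma_x,\sigma_y$ each contain exactly one side of every wall, this forces $\sigma_y\cap\mc{H}_0(G)=\sigma_x\cap\mc{H}_0(G)$. No halfspace in $\mc{H}_0(G)$ then separates $x$ from $y$, and the assumption $\mc{H}_{1/2}(G)=\emptyset$ together with Lemma~\ref{G and h} yields $\mscr{H}(x|y)\cu\mc{H}_1(G)$.

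The main delicate point I expect is verifying the chain condition in Lemma~\ref{nonempty intersection criterion} in the first step: a priori a chain $\mscr{C}\cu\mc{L}^*$ might have its intersection land in $\mc{H}_{1/2}(G)$ and thereby fall outside $\mc{L}^*$, but the standing assumption $\mc{H}_{1/2}(G)=\emptyset$ and Remark~\ref{miracle on H_0} jointly prevent this, guaranteeing that the infimum stays in $\mc{H}_0(G)$ and hence in $\mc{L}^*$.
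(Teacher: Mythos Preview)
Your argument is correct, but it differs from the paper's. Both proofs finish in the same way: given $x\in\mf{h}$ with the property that no halfspace of $\mc{H}_0(G)$ containing $x$ lies inside $\mf{h}$, one applies Lemma~\ref{nonempty intersection criterion} to $\{\mf{h}^*\}\cup(\sigma_x\cap\mc{H}_0(G))$ to produce $y$. The difference is in how such an $x$ is obtained. The paper picks $g\in G$ with $g\mf{h}\subsetneq\mf{h}$ and takes any $x\in\mf{h}\setminus g\mf{h}$; the required property of $x$ is then verified by a short dynamical argument (if some $\mf{j}\in\sigma_x\cap\mc{H}_0(G)$ were contained in $\mf{h}$, a finite-index subgroup would fix $\mf{j}$ and hence keep the orbit of $x$ inside $\mf{h}$, while powers of $g^{-1}$ push $x$ into $\mf{h}^*$). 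You instead invoke Lemma~\ref{nonempty intersection criterion} a second time, applied to $\{\mf{h}\}\cup\mc{L}^*$, to manufacture $x$ directly; this avoids any explicit use of an element $g$ skewering $\mf{h}$, at the cost of one extra application of the lemma. Your route is slightly more combinatorial, the paper's slightly more dynamical; both rely on Remark~\ref{miracle on H_0} and the hypothesis $\mc{H}_{1/2}(G)=\emptyset$ in the same essential way.
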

\begin{proof}
Since $\mf{h}\in\mc{H}_1(G)$, there exists $g\in G$ such that $g\mf{h}\subsetneq\mf{h}$. Pick a point $x\in\mf{h}\setminus g\mf{h}$. Recall that $\s_x\cu\mscr{H}(M)$ is the set of halfspaces that contain $x$, and define $\s:=\s_x\cap\mc{H}_0(G)$. We will show that the intersection of all elements of the set $\s\sqcup\{\mf{h}^*\}$ is nonempty. Picking any point $y$ in this intersection will then yield $\mscr{H}(x|y)\cap\mc{H}_0(G)=\emptyset$, hence $\mscr{H}(x|y)\cu\mc{H}_1(G)$.

Let us show that $\mc{H}=\s\sqcup\{\mf{h}^*\}$ satisfies the hypotheses of Lemma~\ref{nonempty intersection criterion}. First, for any chain $\mscr{C}\cu\mc{H}$, either $\mf{h}^*$ is a lower bound for $\mscr{C}$, or the subset $\mscr{C}\cap\s$ is cofinal. In the latter case, the intersection of all elements of $\mscr{C}\cap\s$ is nonempty, since it contains $x$, so it must be a halfspace $\mf{k}$ by Remark~\ref{bigcap chain halfspace}. By Remark~\ref{miracle on H_0}, we have $\mf{k}\not\in\mc{H}_1(G)$. Therefore $\mf{k}\in\mc{H}_0(G)$, hence $\mf{k}\in\s\cu\mc{H}$. This shows that every chain in $\mc{H}$ has a lower bound in $\mc{H}$.

To conclude the proof, we need to show that any two elements of $\mc{H}$ intersect. Since all elements of $\s$ contain $x$, this boils down to showing that $\mf{h}^*$ cannot be disjoint from a halfspace $\mf{j}\in\s$. If this were the case, a finite-index subgroup $H\leq G$ would preserve $\mf{j}$. For every $h\in H$, we would have $ hx\in h\mf{j}=\mf{j}\cu\mf{h}$. However, since $g\mf{h}\subsetneq\mf{h}$ and $x\in\mf{h}\setminus g\mf{h}$, we have $g^{-n}x\in\mf{h}^*$ for all $n\geq 1$. For $n$ sufficiently large, $g^{-n}$ will lie in $H$, so we have obtained a contradiction.
\end{proof}

The next lemma will only be used in the proof of Proposition~\ref{gate-convex C bar} in Subsection~\ref{non-transverse sect}.

\begin{lem}\label{H_1 is not intersection of halfspaces of core}
Let $C\cu M$ be a $G$--invariant convex subset. If $\mscr{C}\cu\mscr{H}_C(M)$ is a chain such that $\bigcap\mscr{C}\in\mscr{H}(M)\setminus\mscr{H}_C(M)$, then $\bigcap\mscr{C}\not\in\mc{H}_1(G,M)$.
\end{lem}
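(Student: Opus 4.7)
The plan is to argue by contradiction, using Lemma~\ref{nonempty intersection criterion}. Set $\mathfrak{k} := \bigcap \mathscr{C}$. Because each $\mathfrak{h} \in \mathscr{C}$ satisfies $\mathfrak{h}^{*} \cap C \neq \emptyset$ and $\mathfrak{k}^{*} = \bigcup_{\mathfrak{h} \in \mathscr{C}} \mathfrak{h}^{*}$, we obtain $\mathfrak{k}^{*} \cap C \neq \emptyset$. Combined with the hypothesis $\mathfrak{k} \not\in \mathscr{H}_C(M)$, this forces $\mathfrak{k} \cap C = \emptyset$, i.e.\ $C \subseteq \mathfrak{k}^{*}$.

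Now suppose, for contradiction, that there is $g \in G$ with $g\mathfrak{k} \subsetneq \mathfrak{k}$. The key dynamical step is to choose $y \in \mathfrak{k} \setminus g\mathfrak{k}$ and observe that $g^{-1}y \in g^{-1}\mathfrak{k} \setminus \mathfrak{k} \subseteq \mathfrak{k}^{*} = \bigcup_{\mathfrak{h} \in \mathscr{C}} \mathfrak{h}^{*}$. Hence some $\mathfrak{h}_0 \in \mathscr{C}$ satisfies $g^{-1}y \in \mathfrak{h}_0^{*}$. For every $\mathfrak{h} \in \mathscr{C}$ with $\mathfrak{h} \subseteq \mathfrak{h}_0$, the halfspace $\mathfrak{h}$ separates $y \in \mathfrak{h}$ from $g^{-1}y \in \mathfrak{h}^{*}$; equivariantly, $g\mathfrak{h}$ separates $y \in g\mathfrak{h}^{*}$ from $gy \in g\mathfrak{h}$. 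Since both $y, gy \in \mathfrak{k}$, and since $g\mathfrak{h}$ and $g\mathfrak{h}^{*}$ each meet $C \subseteq \mathfrak{k}^{*}$ by $G$-invariance of $C$, the halfspace $g\mathfrak{h}$ is transverse to $\mathfrak{k}$ for every sufficiently deep $\mathfrak{h} \in \mathscr{C}$.

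To close, I would then apply Lemma~\ref{nonempty intersection criterion} to a suitably chosen family $\mathcal{H} \subseteq \mathscr{H}(M)$ assembled from $\mathfrak{k}$, $g\mathfrak{k}$, a cofinal sub-chain $\{\mathfrak{h} \in \mathscr{C} : \mathfrak{h} \subseteq \mathfrak{h}_0\}$, its $g$-translate, and suitably many halfspaces containing $C$. The transversality established above lets one verify pairwise intersection of the various cross-pairs (e.g.\ $\mathfrak{k} \cap g\mathfrak{h}$ via the point $y$), while $\mathfrak{k}$ and $g\mathfrak{k}$ furnish lower bounds for chains lying inside $\mathscr{C}$ and $g\mathscr{C}$ respectively; the dichotomy of Remark~\ref{7 possibilities} then rules out chains in $\mathcal{H}$ that genuinely mix the two families. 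Assuming the hypotheses of Lemma~\ref{nonempty intersection criterion} are met, the nonempty intersection $\bigcap \mathcal{H}$ is produced, yet by construction it is contained in $\mathfrak{k} \cap C = \emptyset$, the desired contradiction.

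The main obstacle I anticipate is the last step: precisely choosing $\mathcal{H}$ so that simultaneously (i) every chain in $\mathcal{H}$ admits a lower bound in $\mathcal{H}$ (the subtle case being chains that alternate between $\mathscr{C}$-elements and their $g$-translates), and (ii) the resulting intersection is genuinely forced inside $\mathfrak{k} \cap C$ rather than escaping into the ``corridor'' $\mathfrak{k} \setminus g\mathfrak{k}$ which already contains $y$. Iterating the transversality argument through $g^{n}\mathfrak{h}$ for $n \geq 1$ and combining it with the finiteness of $\rk M = r$ (via a Dilworth-style partition as in the proof of Lemma~\ref{nonempty intersection criterion}) should suffice, but the bookkeeping is the delicate part of the proof.
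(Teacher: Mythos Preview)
Your setup is correct and matches the paper: $\mathfrak{k}\cap C=\emptyset$, and picking a point in the ``corridor'' $\mathfrak{k}\setminus g\mathfrak{k}$ is exactly the right move (the paper takes $g\mathfrak{k}\supsetneq\mathfrak{k}$, so your $g$ is its $g^{-1}$ and your $y$ is its $x_0$). Your observation that $g\mathfrak{h}$ is transverse to $\mathfrak{k}$ for sufficiently deep $\mathfrak{h}\in\mathscr{C}$ is also correct and is morally the paper's starting point.

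However, the proposed endgame via Lemma~\ref{nonempty intersection criterion} is a dead end, for exactly the reason you anticipate. To force $\bigcap\mathcal{H}\subseteq C$ you would need $\mathcal{H}$ to contain halfspaces cutting out $C$; but $\mathfrak{k}^*\supseteq C$ is one such halfspace, and it is disjoint from $\mathfrak{k}$, so you cannot simultaneously include $\mathfrak{k}$ (or a chain with intersection $\mathfrak{k}$) and force the intersection into $C$ while keeping the family pairwise-intersecting. No clever choice of $\mathcal{H}$ avoids this.

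The paper does not use Lemma~\ref{nonempty intersection criterion} at all; it goes straight for a rank contradiction. The missing step is to upgrade your transversality statement from ``$g\mathfrak{h}$ is transverse to $\mathfrak{k}$'' to ``$g^n\mathfrak{h}$ is transverse to $\mathfrak{j}$ for every sufficiently deep $\mathfrak{j}\in\mathscr{C}$ (depending on $\mathfrak{h}$ and $n$)''. Two of the four required intersections follow from yours since $\mathfrak{k}\subseteq\mathfrak{j}$; for the other two, note that if $g^n\mathfrak{h}\cap\mathfrak{j}^*=\emptyset$ held for a cofinal family of $\mathfrak{j}$'s, then $g^n\mathfrak{h}\subseteq\bigcap\mathfrak{j}=\mathfrak{k}$, contradicting $g^n\mathfrak{h}\cap C\neq\emptyset$ (and likewise for $g^n\mathfrak{h}^*$). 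With this Claim in hand, one inductively picks $\mathfrak{h}_0\supsetneq\mathfrak{h}_1\supsetneq\cdots$ in $\mathscr{C}$, each deep enough relative to the previous ones, so that the halfspaces $g^i\mathfrak{h}_i$ are pairwise transverse for all $i\geq 0$. This contradicts $\rk M<\infty$.

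Your final paragraph gestures at precisely this, so you are close; drop the Lemma~\ref{nonempty intersection criterion} plan entirely and make the upgraded transversality claim the centrepiece.
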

\begin{proof}
Set $\mf{k}:=\bigcap\mscr{C}$. Since $\mf{k}^*$ contains the complement of any element of $\mscr{C}\cu\mscr{H}_C(M)$, we have $\mf{k}^*\cap C\neq\emptyset$. Thus, the fact that $\mf{k}\not\in\mscr{H}_C(M)$ implies that $\mf{k}\cap C=\emptyset$. 

Suppose for the sake of contradiction that there exists $g\in G$ such that $g\mf{k}\supsetneq\mf{k}$. Replacing $\mscr{C}$ with a cofinal subset, we can assume that some point $x_0\in g\mf{k}\setminus\mf{k}$ lies in $\mf{h}^*$ for all $\mf{h}\in\mscr{C}$.

\smallskip
{\bf Claim:} \emph{for every $\mf{h}\in\mscr{C}$ and $n\geq 1$, there exists a halfspace $\mf{j}_n(\mf{h})\in\mscr{C}$ such that, for every $\mf{j}\in\mscr{C}$ with $\mf{j}\cu\mf{j}_n(\mf{h})$, the halfspaces $\mf{h}$ and $g^n\mf{j}$ are transverse.}

\smallskip \noindent
\emph{Proof of Claim.} Since we can always replace $g$ with a proper power, it suffices to consider $n=1$.

To begin with, for every $\mf{j}\in\mscr{C}$, we have $\mf{h}\cap g\mf{j}\supseteq\mf{k}\cap g\mf{k}\neq\emptyset$ and $x_0\in\mf{h}^*\cap g\mf{j}$. Suppose for the sake of contradiction that the claim fails. Then there exists a cofinal subset $\mscr{C}'\cu\mscr{C}$ such that either $\mf{h}\cap g\mf{j}^*=\emptyset$ for all $\mf{j}\in\mscr{C}'$, or $\mf{h}^*\cap g\mf{j}^*=\emptyset$ for all $\mf{j}\in\mscr{C}'$ (equivalently, either $\mf{h}\cu g\mf{j}$ or $\mf{h}^*\cu g\mf{j}$). It follows that $g\mf{k}=\bigcap g\mscr{C}=\bigcap g\mscr{C}'$ contains either $\mf{h}$ or $\mf{h}^*$, which both lie in $\mscr{H}_C(M)$. This contradicts the fact that $g\mf{k}\cap C=g(\mf{k}\cap C)=\emptyset$. 
\hfill$\blacksquare$

\smallskip
Pick any halfspace $\mf{h}_0\in\mscr{C}$. Inductively choose $\mf{h}_{i+1}\in\mscr{C}$ with $\mf{h}_{i+1}\cu\mf{j}_{i+1}(\mf{h}_0)\cap\mf{j}_i(\mf{h}_1)\cap\dots\cap\mf{j}_1(\mf{h}_i)$; these halfspaces exist because the chain $\mscr{C}$ does not have a minimal element, as $\mf{k}\cap C=\emptyset$. By the Claim, the infinitely many halfspaces $g^i\mf{h}_i$ are pairwise transverse. This contradicts the fact that $M$ has finite rank, proving the lemma.
\end{proof}

The following two remarks are needed in the proof of Lemma~\ref{H_0 perp H_1} in Subsection~\ref{compatible metrics sect}.

\begin{rmk}\label{empty intersection implies transversality}
Consider $g\in\aut M$ and $\mf{h}\in\mc{H}_1(g)$ such that $\bigcap_{n\in\Z}g^n\mf{h}=\emptyset$ and $\bigcap_{n\in\Z}g^n\mf{h}^*=\emptyset$. Then $\mf{h}$ is transverse to every element of $\mc{H}_0(g)$. 

Indeed, suppose for the sake of contradiction that $\mf{k}\in\mc{H}_0(g)$ is not transverse to $\mf{h}$. Possibly replacing $\mf{k}$ and $\mf{h}$ with their complements, we can assume that $\mf{k}\cu\mf{h}$. Replacing $g$ with a nontrivial power, we can also assume that $g\mf{k}=\mf{k}$ and $g\mf{h}\subsetneq\mf{h}$. But then we have $\mf{k}=g^n\mf{k}\cu g^n\mf{h}$ for all $n\in\Z$, contradicting the fact that $\bigcap_{n\in\Z}g^n\mf{h}=\emptyset$.
\end{rmk}

The next remark is also needed in the proof of Proposition~\ref{cores of subalgebras} in Subsection~\ref{non-transverse sect}.

\begin{rmk}\label{dynamics in subalgebras}
Let $N\cu M$ be a $G$--invariant subalgebra. Consider $\mf{h}\in\mscr{H}(N)$. By Remark~\ref{halfspaces of subsets}, there exists $\mf{k}\in\mscr{H}(M)$ with $\mf{h}=\mf{k}\cap N$. If $\mf{h}\in\mc{H}_1(G,N)$, every such $\mf{k}$ lies in $\mc{H}_1(G,M)$.

Indeed, there exists $g\in G$ such that $g\mf{h}\subsetneq\mf{h}$. It is clear that $\mf{k}\not\in\mc{H}_0(g,M)$, or a power of $g$ would have to preserve $\mf{k}$ and $\mf{h}$. For every $n\in\Z$, the sets $g^n\mf{h}\cap\mf{h}$ and $g^n\mf{h}^*\cap\mf{h}^*$ are nonempty. Hence  $g^n\mf{k}\cap\mf{k}$ and $g^n\mf{k}^*\cap\mf{k}^*$ are nonempty, and we have $\mf{k}\not\in\overline{\mc{H}}_{1/2}(g,M)\sqcup\overline{\mc{H}}_{1/2}(g,M)^*$. We conclude that $\mf{k}\in\mc{H}_1(g,M)\cu\mc{H}_1(G,M)$.
\end{rmk}

\subsection{Convex cores.}\label{convex cores sect}

It is often possible to reduce to the case of actions satisfying $\mc{H}_{1/2}(G)=\emptyset$ or $\overline{\mc{H}}_{1/2}(G)=\emptyset$ by restricting to the following canonical subspaces.

\begin{defn}
\begin{enumerate}
\item[]
\item The \emph{(convex) core} $\mc{C}(G)$ is the intersection of all halfspaces in $\mc{H}_{1/2}(G)$. 
\item The \emph{reduced (convex) core} $\overline{\mc{C}}(G)$ is the intersection of all halfspaces in $\overline{\mc{H}}_{1/2}(G)$.
\end{enumerate}
We will write $\mc{C}(G,M)$ or $\overline{\mc{C}}(G,M)$ when it is necessary to specify the median algebra.
\end{defn}
% we shouldn't call these ``cores'' outside of this paper, as they have nothing to do with Guirardel cores, where one would instead be looking at $G$--invariant median subalgebras

The core and the reduced core are $G$--invariant convex subsets of $M$, although they can of course be empty in general. Note that we always have $\overline{\mc{C}}(G)\cu\mc{C}(G)$.

\begin{rmk}\label{core and commensurability}
If $H\leq G$ is a commensurated subgroup, then the core $\mc{C}(H)$ is $G$--invariant. 
Indeed, for every $g\in G$, Remark~\ref{G and h rmk} implies that:
\[\mc{H}_{\bullet}(H)=\mc{H}_{\bullet}(gHg^{-1}\cap H)=\mc{H}_{\bullet}(gHg^{-1})=g\mc{H}_{\bullet}(H).\]
Thus, the sets $\mc{H}_0(H)$, $\mc{H}_{1/2}(H)$ and $\mc{H}_1(H)$ are $G$--invariant, and so is $\mc{C}(H)$.

If $H$ is normal, then also the reduced core $\overline{\mc{C}}(H)$ and the sets $\overline{\mc{H}}_0(H)$, $\overline{\mc{H}}_{1/2}(H)$ are $G$--invariant.
\end{rmk}

The following is the main result of this subsection.

\begin{thm}\label{abstract core}
Let $G$ be finitely generated. Then: 
\begin{enumerate}
\item the core $\mc{C}(G)$ is nonempty;
\item if $G\acts M$ has no wall inversions, the reduced core $\overline{\mc{C}}(G)$ is nonempty.
\end{enumerate}
\end{thm}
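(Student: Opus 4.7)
The plan is to apply the fundamental Lemma~\ref{nonempty intersection criterion} to the families $\mc{H}_{1/2}(G)$ and $\overline{\mc{H}}_{1/2}(G)$, whose intersections are $\mc{C}(G)$ and $\overline{\mc{C}}(G)$ respectively. Pairwise intersection is essentially free: Remark~\ref{1/2 bar PWI} handles $\overline{\mc{H}}_{1/2}(G)$, and the inclusion $\mc{H}_{1/2}(G)\subseteq\overline{\mc{H}}_{1/2}(G)$ from Lemma~\ref{G and h} transfers it to $\mc{H}_{1/2}(G)$. A useful preliminary observation is that every $\mf{h}$ in the relevant family satisfies the following property: for each $g\in G$, the pair $(g\mf{h},\mf{h})$ is either equal, transverse, or facing. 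For $\mc{H}_{1/2}(G)$ this is the definition; for $\overline{\mc{H}}_{1/2}(G)$ in part~(2), the four remaining possibilities of Remark~\ref{7 possibilities} are ruled out by combining no-wall-inversions, pairwise intersection, and the exclusion of $\mc{H}_1(G)$.

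The real work is verifying the chain condition. Given a chain $\mscr{C}$ in the relevant family, the intended lower bound is $\mf{k}:=\bigcap\mscr{C}$, a halfspace if nonempty by Remark~\ref{bigcap chain halfspace}. I would show $\mf{k}$ remains in the family by a case analysis. For each $g\in G$, partition $\mscr{C}$ according to the relation between $g\mf{h}$ and $\mf{h}$ and extract a cofinal subclass: the equal class gives $g\mf{k}=\mf{k}$; the facing class gives $g\mf{k}\supseteq\mf{k}^*$ and hence forces $g\mf{k}$ and $\mf{k}$ to face each other (in part~(2), the alternative $g\mf{k}=\mf{k}^*$ is excluded by the no-wall-inversions hypothesis); the transverse class demands a more delicate argument using $\rk M<+\infty$. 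A variant of the argument behind Remark~\ref{miracle on H_0}, where the ``equal/transverse/facing'' structure plays the role of the finite-orbit property, then yields $\mf{k}\not\in\mc{H}_1(G)$. The remaining membership conditions---infinite orbit for $\mc{H}_{1/2}(G)$, or a witnessing facing element for $\overline{\mc{H}}_{1/2}(G)$---are extracted using finite generation of $G$: with only finitely many generators, a pigeonhole on the possible facing patterns along the chain should produce a single $g\in G$ realising the required relation.

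The principal obstacle I foresee is the transverse-cofinal case: showing that the ``mixed'' intersections $g\mf{k}\cap\mf{k}^*$ and $g\mf{k}^*\cap\mf{k}$ remain nonempty, since the direct chain arguments only give $g\mf{k}\cap\mf{k}\neq\emptyset$ (from pairwise intersection) and $g\mf{k}^*\cap\mf{k}^*\neq\emptyset$ (from the containments $\mf{k}^*\supseteq\mf{h}^*$ and $g\mf{k}^*\supseteq g\mf{h}^*$ combined with transversality of $(g\mf{h},\mf{h})$). Secondary difficulties are the scenario $\bigcap\mscr{C}=\emptyset$, which should be either ruled out by Dilworth-style partitioning of $\mscr{C}$ in finite rank or resolved by constructing a deeper lower bound via the group action, and---for part~(1)---the subcase $g\mf{k}=\mf{k}^*$ inside the facing-cofinal analysis, where inversions place $\mf{k}$ in $\mc{H}_0(G)$ rather than $\mc{H}_{1/2}(G)$. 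For this last point the cleanest strategy, exploiting $\mc{H}_{1/2}(H)=\mc{H}_{1/2}(G)$ from Remark~\ref{G and h rmk}, is to deduce part~(1) from part~(2) by passing to a finite-index subgroup $H\leq G$ acting without wall inversions and using $\mc{C}(G)=\mc{C}(H)\supseteq\overline{\mc{C}}(H)$; otherwise one handles inversions in place by an ad~hoc replacement. Once the chain condition is verified in each case, Lemma~\ref{nonempty intersection criterion} immediately delivers the nonemptiness of the two cores.
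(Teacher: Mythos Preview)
Your overall architecture is the same as the paper's: apply Lemma~\ref{nonempty intersection criterion} to $\mc{H}_{1/2}(G)$ and $\overline{\mc{H}}_{1/2}(G)$, with pairwise intersection coming from Remark~\ref{1/2 bar PWI}. The gap is in the chain condition, and it is exactly the two places you flag as obstacles.

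The missing idea is Lemma~\ref{efficient facing}: there is a \emph{finite} subset $F\cu G$ (not the generating set!) such that every $\mf{h}\in\overline{\mc{H}}_{1/2}(G)$ faces $f\mf{h}$ for some $f\in F$. This is where finite generation actually enters, via a pigeonhole on cosets of the stabiliser $G(\mf{h})$ inside a Cayley ball (Lemma~\ref{basic group lemma}): either $G(\mf{h})$ has index $\leq r$ and all its cosets are represented in the ball, or there are $r+1$ translates of $\mf{h}$ in the ball, two of which must face. Once you have this finite $F$, the chain $\mscr{C}$ is covered by the sets $\mscr{C}(f)=\{\mf{h}\in\mscr{C}\mid f\mf{h}\text{ and }\mf{h}\text{ face}\}$, so some $\mscr{C}(f_0)$ is cofinal. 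This single stroke resolves both of your obstacles: for any $\mf{k}\in\mscr{C}(f_0)$ and any $\mf{h}\in\mscr{C}(f_0)$ with $\mf{h}\cu\mf{k}$, one has $f_0^{-1}\mf{k}^*\cu\mf{h}$, hence $f_0^{-1}\mf{k}^*\cu\bigcap\mscr{C}\neq\emptyset$; and the transverse-cofinal case simply never occurs, so the problematic mixed intersections need not be analysed. Your ``pigeonhole over generators'' is not enough here: a generator may well be transverse to every element of $\mscr{C}$, and the facing witnesses for different $\mf{h}$ can be arbitrarily long words---the point is that they can all be taken from one finite set. Your per-$g$ trichotomy is, however, exactly how the paper proves $\bigcap\mscr{C}\notin\mc{H}_1(G)$ (using powers $g,g^2,\dots,g^r$ and the rank bound), so that part of your plan is sound.

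Your proposed reduction of part~(1) to part~(2) does not work: there is no reason a finitely generated $G$ admits a finite-index subgroup acting without wall inversions on $M$. The paper instead argues directly. From the cofinal $\mscr{C}(f_0)$ one gets $f_0C^*\cap C^*=\emptyset$ for $C=\bigcap\mscr{C}$, which (together with Remark~\ref{1/2 bar rmk}) already forces $C\notin\mc{H}_{1/2}(G)^*$. To rule out $C\in\mc{H}_0(G)$, one supposes a finite-index $H\leq G$ fixes $C$, uses $\mc{H}_{1/2}(H)=\mc{H}_{1/2}(G)\supseteq\mscr{C}$ from Remark~\ref{G and h rmk}, and reruns Lemma~\ref{efficient facing} for $H$ to produce $h_0\in H$ with $h_0C^*\cap C^*=\emptyset$, contradicting $h_0C=C$. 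So the finite-index trick is used, but to derive a contradiction inside the chain argument, not to strip inversions from the ambient action.
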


\begin{ex}\label{empty reduced core}
If $G\acts M$ has wall inversions, the reduced core $\overline{\mc{C}}(G)$ can be empty. For instance, consider $M=[-1,1]\setminus\{0\}$ and the group $G$ generated by the reflection in $0$.
\end{ex}

We now obtain a sequence of lemmas leading up to Lemma~\ref{previously part of the proof}. Theorem~\ref{abstract core} will then immediately follow by applying Lemma~\ref{nonempty intersection criterion}.

\begin{lem}\label{basic group lemma}
For some $d\geq 1$, let $B_d$ be the (closed) ball of radius $d$ in a Cayley graph of $G$. Let $H\leq G$ be a subgroup with the property that $B_d\cu g_1H\cup\dots\cup g_dH$ for some $g_1,\dots,g_d\in G$. Then $H$ has index $\leq d$ in $G$.
\end{lem}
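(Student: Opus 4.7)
The strategy is to reformulate the hypothesis in terms of the Schreier coset graph of $H$ in $G$, and then apply a pigeonhole on ball growth. Let $S$ be the finite generating set defining the Cayley graph, so that $B_d$ consists of elements of $S$-word length at most $d$. Construct the Schreier graph $\Gamma$ with vertex set $G/H$ and edges joining $aH$ to $saH$ for each $s\in S$; this is well-defined since $S$ acts on $G/H$ by left multiplication, and connected because $S$ generates $G$. By construction, the ball of radius $d$ in $\Gamma$ around the basepoint $H$ is exactly $\{aH : a\in B_d\}$, and the hypothesis $B_d\subseteq g_1H\cup\dots\cup g_dH$ forces this set to have at most $d$ elements.

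The statement then reduces to the purely graph-theoretic claim: in a connected graph $\Gamma$, if some vertex $v$ has $|B_d(v)|\leq d$, then $|V(\Gamma)|\leq d$. I would prove this by tracking the nondecreasing sequence $\beta(r):=|B_r(v)|$, starting at $\beta(0)=1$. Whenever $\beta(r)=\beta(r-1)$, the ball $B_{r-1}(v)$ is closed under taking neighbors, so connectedness of $\Gamma$ forces $B_{r-1}(v)=V(\Gamma)$; otherwise $\beta$ increases by at least one at step $r$. If no equality $\beta(r)=\beta(r-1)$ ever occurred for $r\leq d$, we would get $\beta(d)\geq d+1$, contradicting $\beta(d)\leq d$. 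Therefore some $r\leq d$ satisfies $\beta(r)=\beta(r-1)$, and $|V(\Gamma)|=\beta(r-1)\leq d$.

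Applied to $\Gamma=G/H$, this yields $[G:H]\leq d$. I do not anticipate any real obstacle here; the main care is conventional, namely choosing left multiplication on left cosets so that the edge relation on $G/H$ is well-defined and so that the Schreier ball of radius $d$ around $H$ coincides with the set of cosets meeting $B_d$. Everything else is elementary BFS bookkeeping.
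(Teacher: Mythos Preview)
Your proof is correct and is essentially the same argument as the paper's, recast in the language of the Schreier coset graph. The paper proceeds by induction on $d$ (after passing to right cosets via $B_d=B_d^{-1}$): either $B_{d-1}$ is already covered by $d-1$ of the cosets and induction applies, or $B_{d-1}$ meets all $d$ cosets and then right-multiplication by each generator permutes them, forcing $G=\bigcup Hg_i$; this dichotomy is exactly your ball-growth pigeonhole $\beta(r-1)<\beta(r)$ versus ``$B_{r-1}$ closed under neighbours'', just unwound.
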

\begin{proof}
Let $S\cu G$ be the generating set corresponding to the chosen Cayley graph. We prove the lemma by induction on $d$. In the base case $d=1$, a coset of $H$ contains $\{1\}\cup S$, hence $H=G$.

Suppose that there exist $g_1,\dots,g_d\in G$ such that $B_d\cu g_1H\cup\dots\cup g_dH$. If $B_{d-1}$ is contained in the union of $d-1$ of these cosets, then the inductive hypothesis implies that $H$ has index $\leq d-1<d$. Thus, we can assume that $B_{d-1}$ intersects each of the cosets $g_1H,\dots,g_dH$.

Hence, for every $1\leq i\leq d$ and every $s\in S$, there exists $1\leq j(i,s)\leq d$ such that $sg_iH=g_{j(i,s)}H$. Since $S$ generates $G$, this implies that, for every $i$ and $g\in G$, there exists $1\leq j(i,g)\leq d$ such that $gg_iH=g_{j(i,g)}H$. Hence $G=g_1H\cup\dots\cup g_dH$ as required.
\end{proof}

\begin{lem}\label{efficient facing}
If $G$ is finitely generated, there exists a finite subset $F\cu G$ with the following property. For every $\mf{h}\in\overline{\mc{H}}_{1/2}(G)$, there exists $f\in F$ such that $f\mf{h}$ and $\mf{h}$ are facing.
\end{lem}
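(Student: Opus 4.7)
Plan: I will show that the closed ball $F := B_{2r}$ of radius $2r := 2\rk M$ in a fixed Cayley graph of $G$ (with respect to a chosen finite generating set $S$) satisfies the required property. Suppose, for contradiction, that some $\mf{h} \in \overline{\mc{H}}_{1/2}(G)$ has the property that no element of $B_{2r}$ sends $\mf{h}$ to a halfspace facing $\mf{h}$.

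Since $\mf{h} \notin \mc{H}_1(G)$, there is no strict nesting between $g\mf{h}$ and $\mf{h}$ for any $g \in G$; and Remark~\ref{1/2 bar rmk} rules out both $g\mf{h} = \mf{h}^*$ and the possibility that $g\mf{h}^*$ and $\mf{h}^*$ are facing. So the only relations available for the pair $(g\mf{h}, \mf{h})$ are: equal, transverse, or facing. Under the contradiction hypothesis, facing is ruled out for every $f \in B_r \subseteq B_{2r}$, leaving only $f\mf{h} = \mf{h}$ or $f\mf{h}$ transverse to $\mf{h}$. Moreover, if distinct halfspaces $f_1\mf{h}$ and $f_2\mf{h}$ with $f_1, f_2 \in B_r$ were facing, then $f_2^{-1}f_1 \in B_{2r}$ would send $\mf{h}$ to a facing halfspace, again a contradiction. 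Hence $\{f\mf{h} : f \in B_r\}$ is a set of pairwise-transverse halfspaces of $M$, and the rank bound forces $|\{f\mf{h} : f \in B_r\}| \leq r$.

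Consequently $B_r$ is covered by at most $r$ left cosets of $H := \mathrm{Stab}_G(\mf{h})$, so Lemma~\ref{basic group lemma} applied with $d = r$ gives $[G : H] \leq r$. Since $G$ acts transitively via $S$ on the $\leq r$ left cosets of $H$, the associated Schreier coset graph is connected on at most $r$ vertices, hence has diameter at most $r - 1$; in particular, every coset of $H$ admits a representative in $B_{r-1}$. Choosing $g \in G$ with $g\mf{h}$ facing $\mf{h}$ (which exists by definition of $\overline{\mc{H}}_{1/2}(G)$) and picking a representative $g' \in B_{r-1} \subseteq B_{2r}$ of the coset $gH$ yields $g' \in B_{2r}$ with $g'\mf{h} = g\mf{h}$ facing $\mf{h}$, contradicting our assumption. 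The main subtle point is the passage from $B_r$ to the doubled ball $B_{2r}$: working inside $B_r$ alone is exactly what forces pairwise-transversality among the halfspaces $f\mf{h}$ with $f \in B_r$, and this transversality is what triggers the rank bound feeding into Lemma~\ref{basic group lemma}.
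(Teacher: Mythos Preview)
Your proof is correct and follows essentially the same approach as the paper: both reduce via Remark~\ref{1/2 bar rmk} to the trichotomy \{equal, transverse, facing\} for pairs $g\mf{h},\mf{h}$, use the rank bound on pairwise-transverse halfspaces, and invoke Lemma~\ref{basic group lemma}; your $F=B_{2r}$ coincides with the paper's $B^{-1}B$ for a symmetric generating set. The only cosmetic difference is that the paper argues by a direct case split on whether the index of the stabiliser is $\leq r$ or $>r$, whereas you package the same idea as a proof by contradiction with a short Schreier-graph diameter step.
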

\begin{proof}
Choose any locally finite Cayley graph of $G$ and let $B\cu G$ be the (closed) ball of radius $r=\rk M$ in it. Take $F:=B^{-1}B$. Consider $\mf{h}\in\overline{\mc{H}}_{1/2}(G)$.

Let $G(\mf{h})\leq G$ denote the stabiliser of $\mf{h}$. Choose elements $g_1,\dots,g_k\in G$ such that the cosets $g_iG(\mf{h})$ are pairwise distinct. We take $k$ to be the index of $G(\mf{h})$ in $G$ if this is $\leq r$, and $k=r+1$ otherwise. Note that we can choose $g_1,\dots,g_k$ within $B$. Otherwise, $B$ would be covered by at most $k-1\leq r$ cosets of $G(\mf{h})$ and Lemma~\ref{basic group lemma} would imply that $G(\mf{h})$ has index $\leq k-1$ in $G$, a contradiction.

If $i\neq j$, Remark~\ref{1/2 bar rmk} guarantees that the halfspaces $g_i\mf{h}$ and $g_j\mf{h}$ are either transverse or facing. If $k\leq r$, then every coset of $G(\mf{h})$ is of the form $g_iG(\mf{h})$, and there exists $f\in\{g_1,\dots,g_k\}\cu B$ such that $f\mf{h}$ and $\mf{h}$ are facing. Otherwise, we have $k>r=\rk M$ and there exist $i\neq j$ such that $g_i\mf{h}$ and $g_j\mf{h}$ are facing. Setting $f=g_i^{-1}g_j$, we have $f\in B^{-1}B=F$, and $\mf{h}$ and $f\mf{h}$ are facing.
\end{proof}

\begin{lem}\label{previously part of the proof}
Let $G$ be finitely generated. 
\begin{enumerate}
\item If $\mscr{C}\cu\mc{H}_{1/2}(G)$ is a chain, then $\bigcap\mscr{C}$ is a halfspace of $M$ lying in $\mc{H}_{1/2}(G)$.
\item If $G\acts M$ has no wall inversions and $\mscr{C}\cu\overline{\mc{H}}_{1/2}(G)$ is a chain, then $\bigcap\mscr{C}\in\overline{\mc{H}}_{1/2}(G)$.
\end{enumerate}
\end{lem}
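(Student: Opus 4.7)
My plan is to produce a facing witness at the limit $\mf{k}:=\bigcap\mscr{C}$ via Lemma~\ref{efficient facing} and pigeonhole, then verify the type of $\mf{k}$ by a case analysis on the $G$-action. Since $\mc{H}_{1/2}(G)\cu\overline{\mc{H}}_{1/2}(G)$ by Remark~\ref{0 bar rmk}, Lemma~\ref{efficient facing} furnishes a finite set $F\cu G$ such that each $\mf{h}\in\mscr{C}$ admits some $f_\mf{h}\in F$ with $f_\mf{h}\mf{h}$ facing $\mf{h}$. If $\mscr{C}$ has a minimum then $\bigcap\mscr{C}$ equals it and the conclusion is immediate, so I may assume otherwise. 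Since $F$ is finite and $\mscr{C}$ is totally ordered, a pigeonhole argument then yields a single $f\in F$ and a cofinal subchain $\mscr{C}_0\cu\mscr{C}$ such that $f\mf{h}$ faces $\mf{h}$ for every $\mf{h}\in\mscr{C}_0$: otherwise, for each $f\in F$ one could pick $\mf{h}_f\in\mscr{C}$ below which no element has associated value $f$, and any element of $\mscr{C}$ strictly below all the (finitely many) $\mf{h}_f$ would lack an associated $f$-value.

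For $\mf{h}_0\in\mscr{C}_0$ and $\mf{h}\in\mscr{C}_0$ with $\mf{h}\cu\mf{h}_0$, facing gives $\mf{h}^*\cu f\mf{h}\cu f\mf{h}_0$, hence $(f\mf{h}_0)^*\cu\mf{h}$. Intersecting over such $\mf{h}$ (a cofinal sub-subchain) yields $(f\mf{h}_0)^*\cu\mf{k}$, so $\mf{k}$ is nonempty and, by Remark~\ref{bigcap chain halfspace}, a halfspace. Letting $\mf{h}_0$ vary in $\mscr{C}_0$ and using $(f\mf{k})^*=\bigcup_{\mf{h}\in\mscr{C}_0}(f\mf{h})^*$, the same computation gives $(f\mf{k})^*\cu\mf{k}$, i.e.\ $\mf{k}\cup f\mf{k}=M$. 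In part~(2), the no wall inversions hypothesis excludes $f\mf{k}=\mf{k}^*$, so $f\mf{k}$ properly faces $\mf{k}$, supplying the facing witness for $\mf{k}\in\overline{\mc{H}}_{1/2}(G)$ modulo the check that $\mf{k}\not\in\mc{H}_1(G)$.

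It remains to verify the type of $\mf{k}$: $\mf{k}\not\in\mc{H}_1(G)$ for part~(2), and additionally $\mf{k}\not\in\mc{H}_0(G)$ together with the equal/facing/transverse alternative for every $g\in G$ in part~(1). For each $g\in G$ and $\mf{h}\in\mscr{C}$, the hypothesis on $\mscr{C}$ (together with Remark~\ref{1/2 bar rmk} in part~(2), noting also that $\mf{h}\not\in\mc{H}_1(G)$ rules out $g\mf{h}\subsetneq\mf{h}$ and $g\mf{h}\supsetneq\mf{h}$) forces $g\mf{h}$ and $\mf{h}$ to be equal, facing, or transverse. Pigeonhole extracts a cofinal subchain with uniform type; the equal and facing cases transfer cleanly to $\mf{k}$ by the argument above applied with $g$ in place of $f$, producing $g\mf{k}=\mf{k}$ or $g\mf{k}$ facing $\mf{k}$ and thereby excluding $g\mf{k}\subsetneq\mf{k}$.

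The main obstacle is the transverse case: when $g\mf{h}$ is transverse to $\mf{h}$ for cofinally many $\mf{h}\in\mscr{C}$, one must check that the four quadrants of $g\mf{k}$ and $\mf{k}$ remain nonempty in order to obtain transversality at the limit and rule out $g\mf{k}\subsetneq\mf{k}$. The subtlety is that $\mscr{C}$ may asymptotically concentrate inside a single quadrant of some $g\mf{h}_0\cap\mf{h}_0$, obstructing a direct transfer of transversality. I expect to overcome this by applying Lemma~\ref{nonempty intersection criterion} to auxiliary families such as $g\mscr{C}_g\cup\{\mf{h}_0^*\}$ (and their symmetric counterparts), using the finite-rank bound on $M$ together with the facing element $f$ produced above to verify the pairwise-intersection and chain-boundedness hypotheses.
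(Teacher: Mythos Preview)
Your opening matches the paper: use Lemma~\ref{efficient facing} plus pigeonhole to find a single $f\in F$ with $f\mf{h}$ facing $\mf{h}$ on a cofinal subchain, deduce that $\mf{k}:=\bigcap\mscr{C}$ is a nonempty halfspace, and obtain $f\mf{k}^*\cap\mf{k}^*=\emptyset$.

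The genuine gap is your treatment of $\mf{k}\not\in\mc{H}_1(G)$ (and, for part~(1), the full equal/facing/transverse alternative). Your proposed fix for the transverse case---applying Lemma~\ref{nonempty intersection criterion} to a family such as $g\mscr{C}_g\cup\{\mf{h}_0^*\}$---does not satisfy the chain-lower-bound hypothesis of that lemma: a cofinal subchain of $g\mscr{C}_g$ has intersection $g\mf{k}$, which need not lie in the family, and $\mf{h}_0^*$ is a lower bound for it only if $\mf{h}_0^*\cu g\mf{k}$, which is precisely the statement you are trying to establish. So the argument is circular as written.

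The paper avoids the transverse case altogether by a power trick. Arguing by contradiction from $g\mf{k}\subsetneq\mf{k}$, observe that for each $\mf{h}\in\mscr{C}$ the halfspaces $\mf{h},g\mf{h},\dots,g^r\mf{h}$ cannot be pairwise transverse (since $\rk M=r$); by Remark~\ref{1/2 bar rmk} there is therefore some $1\leq i\leq r$ with $g^i\mf{h}$ equal to or facing $\mf{h}$. Pigeonhole over the finitely many values of $i$ yields a fixed $i$ for which this holds cofinally, and then your own equal/facing transfer argument gives $g^i\mf{k}=\mf{k}$ or $g^i\mf{k}^*\cap\mf{k}^*=\emptyset$, either of which contradicts $g^i\mf{k}\subsetneq\mf{k}$. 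Passing to powers converts the problematic transverse case into the cases you already handle.

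There is a second omission in part~(1): you assert $\mf{k}\not\in\mc{H}_0(G)$ but give no argument. The paper proves this by supposing a finite-index subgroup $H\leq G$ fixes $\mf{k}$; since $\mc{H}_{1/2}(H)=\mc{H}_{1/2}(G)$ by Remark~\ref{G and h rmk}, one may reapply Lemma~\ref{efficient facing} to $H$ and rerun the cofinal-facing argument to produce $h_0\in H$ with $h_0\mf{k}^*\cap\mf{k}^*=\emptyset$, contradicting $h_0\mf{k}=\mf{k}$.
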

\begin{proof}
Recall that $\mc{H}_{1/2}(G)\cu\overline{\mc{H}}_{1/2}(G)$. The first part of the proof will deal with a general chain $\mscr{C}\cu\overline{\mc{H}}_{1/2}(G)$ and a general action $G\acts M$.  We will introduce the specific hypotheses of the two parts of the lemma only towards the end.

Set $C:=\bigcap\mscr{C}$. Given $g\in G$, we denote by $\mscr{C}(g)\cu\mscr{C}$ the subset of those halfspaces $\mf{h}$ such that $\mf{h}$ and $g\mf{h}$ are facing. We denote by $\mscr{S}(g)\cu\mscr{C}$ the subset of halfspaces $\mf{h}$ such that $g\mf{h}=\mf{h}$. 

By Lemma~\ref{efficient facing}, there exists a finite subset $F\cu G$ such that, for every $\mf{h}\in\overline{\mc{H}}_{1/2}(G)$, there exists $f\in F$ such that $\mf{h}$ and $f\mf{h}$ are facing. Thus, $\mscr{C}$ is the union of finitely many subsets $\mscr{C}(f)$ with $f\in F$, and there exists $f_0\in F$ such that $\mscr{C}(f_0)$ is cofinal in $\mscr{C}$.

Fix some halfspace $\mf{k}\in\mscr{C}(f_0)$. The set of those halfspaces $\mf{h}\in\mscr{C}(f_0)$ with $\mf{h}\cu\mf{k}$ is also cofinal in $\mscr{C}$. For every such $\mf{h}$, we have $\mf{k}^*\cu\mf{h}^*\cu f_0\mf{h}$, hence $f_0^{-1}\mf{k}^*\cu\mf{h}$. This shows that $f_0^{-1}\mf{k}^*\cu C$. In particular, $C$ is nonempty and Remark~\ref{bigcap chain halfspace} implies that $C\in\mscr{H}(M)$.

We now show that $C\not\in\mc{H}_1(G)$. Suppose for the sake of contradiction that $gC\subsetneq C$ for some $g\in G$. For every $\mf{h}\in\overline{\mc{H}}_{1/2}(G)$, the halfspaces $\mf{h},g\mf{h},\dots,g^r\mf{h}$ cannot be pairwise transverse, hence Remark~\ref{1/2 bar rmk} guarantees the existence of $1\leq i\leq r$ such that $\mf{h}$ and $g^i\mf{h}$ are either facing or equal. Thus, $\mscr{C}$ is the union of the finitely many subsets $\mscr{C}(g^i)\cup\mscr{S}(g^i)$, and there exists $1\leq i\leq r$ such that either $\mscr{C}(g^i)$ or $\mscr{S}(g^i)$ is cofinal in $\mscr{C}$. Hence either $g^iC^*\cap C^*=\emptyset$ or $g^iC=C$. However, $gC\subsetneq C$ implies that $g^iC\subsetneq C$ and $g^iC^*\cap C^*=C^*\neq\emptyset$, a contradiction. 

We conclude the proof separately for the two parts of the lemma.

\smallskip
{\bf Part~(2):} recalling that $\mscr{C}(f_0)$ is cofinal in $\mscr{C}$, we see that $f_0C^*\cap C^*=\emptyset$. Since $G\acts M$ has no wall inversions, this implies that $f_0C$ and $C$ are facing. We have already shown that $C\in\mscr{H}(M)\setminus\mc{H}_1(G)$, so this means that $C$ lies in $\overline{\mc{H}}_{1/2}(G)$, as required.

\smallskip
{\bf Part~(1):} as above, we have $f_0C^*\cap C^*=\emptyset$, hence $C\not\in\mc{H}_{1/2}(G)^*$. We are left to show that $C\not\in\mc{H}_0(G)$. If this were not the case, there would exist a finite-index subgroup $H\leq G$ such that $hC=C$ for all $h\in H$. By Remark~\ref{G and h rmk}, we would have $\mc{H}_{1/2}(H)=\mc{H}_{1/2}(G)\supseteq\mscr{C}$. Thus, applying Lemma~\ref{efficient facing} to $H$ and proceeding as above, we would obtain $h_0\in H$ such that $\mscr{C}(h_0)$ is cofinal in $\mscr{C}$. This would yield $h_0C^*\cap C^*=\emptyset$, contradicting the fact that $h_0C=C$.
\end{proof}

\begin{proof}[Proof of Theorem~\ref{abstract core}.]
Under the respective hypotheses, Remark~\ref{1/2 bar PWI} and Lemma~\ref{previously part of the proof} show that the sets $\mc{H}_{1/2}(G)$ and $\overline{\mc{H}}_{1/2}(G)$ satisfy the hypotheses of Lemma~\ref{nonempty intersection criterion}.
\end{proof}

We can now characterise the halfspaces of our two cores. Recall that, for every convex subset $C\cu M$, Remark~\ref{halfspaces of subsets} shows that the subset $\mscr{H}_C(M)\cu\mscr{H}(M)$ is naturally identified with $\mscr{H}(C)$ (and isomorphic to it as a pocset).

\begin{lem}\label{halfspaces of core}
\begin{enumerate}
\item[]
\item We have $\mscr{H}_{\mc{C}(G)}(M)\cu\mc{H}_0(G)\sqcup\mc{H}_1(G)$ and $\mscr{H}_{\overline{\mc{C}}(G)}(M)\cu\overline{\mc{H}}_0(G)\sqcup\mc{H}_1(G)$. Taking intersections with the cores, this induces $G$--invariant partitions:
\begin{align*}
\hspace{1.5cm} \mscr{H}(\mc{C}(G))&=\mc{H}_0(G,\mc{C}(G))\sqcup\mc{H}_1(G,\mc{C}(G)), & \mscr{H}(\overline{\mc{C}}(G))&=\overline{\mc{H}}_0(G,\overline{\mc{C}}(G))\sqcup\mc{H}_1(G,\overline{\mc{C}}(G)).
\end{align*}
\item If $G$ is finitely generated, then $\mc{H}_0(G)\cu\mscr{H}_{\mc{C}(G)}(M)$. If moreover $G\acts M$ has no wall inversions, then $\overline{\mc{H}}_0(G)\cu\mscr{H}_{\overline{\mc{C}}(G)}(M)$.
\end{enumerate}
\end{lem}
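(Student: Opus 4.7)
My plan is to derive part (1) directly from the definitions of the cores together with the classification in Lemma~\ref{G and h}, and then to handle both statements of part (2) by applying Lemma~\ref{nonempty intersection criterion} to a suitable enlargement of the family defining each core.

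For part (1), I first observe that no halfspace $\mf{h} \in \mc{H}_{1/2}(G) \sqcup \mc{H}_{1/2}(G)^*$ can cross $\mc{C}(G)$, since by the very definition of $\mc{C}(G)$ either $\mc{C}(G) \cu \mf{h}$ or $\mc{C}(G) \cu \mf{h}^*$; combined with the four-term partition in Lemma~\ref{G and h}, this yields $\mscr{H}_{\mc{C}(G)}(M) \cu \mc{H}_0(G) \sqcup \mc{H}_1(G)$, and the analogous argument using the second partition handles $\overline{\mc{C}}(G)$. Via the bijection $\mscr{H}_{\mc{C}(G)}(M) \to \mscr{H}(\mc{C}(G))$ from Remark~\ref{halfspaces of subsets}, I next track the dynamical classes under restriction: a halfspace in $\mc{H}_0(G)$ has finite $G$--orbit, which survives restriction, while if $\mf{h} \in \mc{H}_1(G)$ satisfies $g\mf{h} \subsetneq \mf{h}$, then $g\mf{h} \cap \mc{C}(G) \subsetneq \mf{h} \cap \mc{C}(G)$, the strictness being forced by the injectivity of the restriction map. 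Since the two images land in the disjoint sets $\mc{H}_0(G, \mc{C}(G))$ and $\mc{H}_1(G, \mc{C}(G))$ provided by Lemma~\ref{G and h} applied to $G \acts \mc{C}(G)$, the bijection produces the claimed partition and, as a consequence, $\mc{H}_{1/2}(G, \mc{C}(G)) = \emptyset$. The analogous statement for $\overline{\mc{C}}(G)$ requires the additional observation that transversality of $\mf{h}$ and $g\mf{h}$ in $M$ descends to their intersections with $\overline{\mc{C}}(G)$, which follows by applying Helly's lemma to the triple $\{\mf{h}, g\mf{h}, \overline{\mc{C}}(G)\}$ and the three analogous triples obtained by taking complements.

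For the first claim of part (2), fix $\mf{h} \in \mc{H}_0(G)$ and consider the family $\mc{H}_{1/2}(G) \cup \{\mf{h}\}$. The chain condition required by Lemma~\ref{nonempty intersection criterion} reduces to Lemma~\ref{previously part of the proof}(1) after passing to the cofinal sub-chain lying below $\mf{h}$ when $\mf{h}$ is not the smallest element. Pairwise intersection is the heart of the argument: if some $\mf{k} \in \mc{H}_{1/2}(G)$ were disjoint from $\mf{h}$, then $\mf{k} \cu \mf{h}^*$; the finite-index stabilizer $G(\mf{h}) \leq G$ is finitely generated, and Remark~\ref{G and h rmk} gives $\mc{H}_{1/2}(G(\mf{h})) = \mc{H}_{1/2}(G) \ni \mf{k}$, so Lemma~\ref{efficient facing} applied inside $G(\mf{h})$ produces $f \in G(\mf{h})$ with $f\mf{k}$ and $\mf{k}$ facing. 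But $f\mf{h} = \mf{h}$ also forces $f\mf{k} \cu \mf{h}^*$, so both $\mf{h} \cu \mf{k}^*$ and $\mf{h} \cu (f\mf{k})^*$ hold, contradicting the facing relation $\mf{k}^* \cap (f\mf{k})^* = \emptyset$. Hence $\mf{h} \cap \mc{C}(G) \neq \emptyset$, and the same argument applied to $\mf{h}^* \in \mc{H}_0(G)$ yields $\mf{h}^* \cap \mc{C}(G) \neq \emptyset$.

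The reduced-core inclusion follows the parallel scheme with the family $\overline{\mc{H}}_{1/2}(G) \cup \{\mf{h}\}$ and with Lemma~\ref{previously part of the proof}(2) supplying the chain condition, but pairwise intersection is now the main obstacle and is noticeably more subtle: the element $g \in G$ that makes $g\mf{k}$ and $\mf{k}$ facing need not lie in $G(\mf{h})$, so Lemma~\ref{efficient facing} cannot be invoked inside the stabilizer directly. I will instead take any such $g$ and split into two cases. If $g\mf{h} = \mf{h}$, the contradiction of the previous paragraph applies verbatim. Otherwise, no wall inversions together with $\mf{h} \in \overline{\mc{H}}_0(G)$ force $g\mf{h}$ and $\mf{h}$ to be transverse; on the other hand, $\mf{h} \cu \mf{k}^*$ combined with the facing relation $\mf{k} \cup g\mf{k} = M$ gives $\mf{h} \cu g\mf{k}$, and $g\mf{k} \cap g\mf{h} = g(\mf{k} \cap \mf{h}) = \emptyset$ gives $g\mf{k} \cu g\mf{h}^*$, so $\mf{h} \cu g\mf{h}^*$, contradicting transversality. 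It is precisely this second case where the no-wall-inversions hypothesis is used.
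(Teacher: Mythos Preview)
Your proof is correct and follows essentially the same approach as the paper's: part~(1) via the partitions of Lemma~\ref{G and h} and the uniqueness of restrictions from Remark~\ref{halfspaces of subsets}, and part~(2) via Lemma~\ref{nonempty intersection criterion} applied to the enlarged families $\mc{H}_{1/2}(G)\cup\{\mf{h}\}$ and $\overline{\mc{H}}_{1/2}(G)\cup\{\mf{h}\}$, with the disjointness contradiction coming from a facing pair in the orbit of $\mf{k}$.

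Two minor simplifications are available. First, in the $\mc{H}_0$ case you do not need Lemma~\ref{efficient facing} or finite generation of $G(\mf{h})$: since $\mf{k}\in\mc{H}_{1/2}(G(\mf{h}))\cu\overline{\mc{H}}_{1/2}(G(\mf{h}))$, the definition of $\overline{\mc{H}}_{1/2}$ already hands you some $f\in G(\mf{h})$ with $f\mf{k}$ and $\mf{k}$ facing. Second, in the $\overline{\mc{H}}_0$ case the paper avoids your case split: from $\mf{k}\cu\mf{h}^*$ and $g\mf{k}^*\cu\mf{k}$ one gets $g\mf{h}\cu g\mf{k}^*\cu\mf{k}\cu\mf{h}^*$ directly, so $g\mf{h}\cap\mf{h}=\emptyset$; since $g\mf{h}\neq\mf{h}^*$ by the no-wall-inversions hypothesis, this makes $\mf{h}^*$ and $g\mf{h}^*$ facing, which immediately contradicts $\mf{h}\in\overline{\mc{H}}_0(G)$ without first deciding whether $g\mf{h}=\mf{h}$ or $g\mf{h}$ is transverse to $\mf{h}$.
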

\begin{proof}
We begin with part~(1). The first sentence is clear. By Remark~\ref{halfspaces of subsets}, every $\mf{k}\in\mscr{H}(\mc{C}(G))$ is of the form $\mf{k}=\mf{h}\cap\mc{C}(G)$ for a unique $\mf{h}\in\mscr{H}_{\mc{C}(G)}(M)$ (and similarly for the reduced core).

If $\mf{h}\in\mc{H}_0(G,M)$, then $\mf{h}$ is preserved by a finite-index subgroup of $G$, and so is $\mf{h}\cap\mc{C}(G)$. In this case, we have $\mf{k}\in\mc{H}_0(G,\mc{C}(G))$. If $\mf{h}\in\overline{\mc{H}}_0(G,M)$, it follows from Remark~\ref{halfspaces of subsets} that $\mf{h}\cap\overline{\mc{C}}(G)$ lies in $\overline{\mc{H}}_0(G,\overline{\mc{C}}(M))$.

If $\mf{h}\in\mc{H}_1(G,M)$, then there exists $g\in G$ with $g\mf{h}\subsetneq\mf{h}$. In this case, $g\mf{k}\cu\mf{k}$ and, since $\mf{k}$ is the intersection with $\mc{C}(G)$ of a \emph{unique} halfspace of $M$, we must have $g\mf{k}\subsetneq\mf{k}$. Hence $\mf{k}\in\mc{H}_1(G,\mc{C}(G))$ and, similarly, $\mf{h}\cap\overline{\mc{C}}(G)\in\mc{H}_1(G,\overline{\mc{C}}(G))$.

Now, let us prove part~(2). Consider a halfspace $\mf{h}\in\mscr{H}(M)$. Applying Lemma~\ref{previously part of the proof} and Lemma~\ref{nonempty intersection criterion} to the sets $\mc{H}_{1/2}(G)\cup\{\mf{h}\}$ and $\mc{H}_{1/2}(G)\cup\{\mf{h}^*\}$, we see that $\mf{h}\not\in\mscr{H}_{\mc{C}(G)}(M)$ if and only if there exists $\mf{k}\in\mc{H}_{1/2}(G)$ such that $\mf{k}\cap\mf{h}=\emptyset$ or $\mf{k}\cap\mf{h}^*=\emptyset$. Similarly, if $G\acts M$ has no wall inversions, $\mf{h}\not\in\mscr{H}_{\overline{\mc{C}}(G)}(M)$ if and only if there exists $\mf{k}\in\overline{\mc{H}}_{1/2}(G)$ such that $\mf{k}\cap\mf{h}=\emptyset$ or $\mf{k}\cap\mf{h}^*=\emptyset$.

Halfspaces $\mf{k}\in\mc{H}_{1/2}(G)$ and $\mf{h}\in\mc{H}_0(G)$ can never be disjoint. Indeed, there exists a finite-index subgroup $H\leq G$ such that $g\mf{h}=\mf{h}$ for all $g\in H$. By Remark~\ref{G and h rmk}, we have $\mc{H}_{1/2}(G)=\mc{H}_{1/2}(H)$, so there exists $g\in H$ such that $g\mf{k}$ and $\mf{k}$ are facing. If $\mf{k}$ and $\mf{h}$ were disjoint, then $g\mf{h}^*$ and $\mf{h}^*$ would be facing, contradicting the fact that $g\mf{h}=\mf{h}$.

Finally, $\mf{k}\in\overline{\mc{H}}_{1/2}(G)$ and $\mf{h}\in\overline{\mc{H}}_0(G)$ can never be disjoint either. Indeed, there exists $g\in G$ such that $\mf{k}$ and $g\mf{k}$ are facing. If $\mf{k}$ and $\mf{h}$ were disjoint, then $g\mf{h}^*$ and $\mf{h}^*$ would be facing, violating the fact that, for every $g\in G$, either $g\mf{h}$ lies in $\{\mf{h},\mf{h}^*\}$ or it is transverse to $\mf{h}$.
\end{proof}

As a consequence of Theorem~\ref{abstract core}, we obtain the following generalisation of a classical result of Sageev on $\CAT$ cube complexes (see e.g.\ \cite[Proposition~B.8]{CFI} and \cite[Theorem~5.1]{Sag95}).

\begin{prop}\label{finite orbit new prop}
Suppose that $G$ is finitely generated and $\mc{H}_1(G)=\emptyset$. Then:
\begin{enumerate}
\item there exists a $G$--invariant median subalgebra $C\cu M$ isomorphic to $\{0,1\}^k$ with $0\leq k\leq r$;
\item if $G$ acts with no wall inversions, then $G$ fixes a point of $M$. 
\end{enumerate}
\end{prop}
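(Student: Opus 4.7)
Both parts will be proved by extracting information from the convex cores of Subsection~\ref{convex cores sect}, exploiting the hypothesis $\mc{H}_1(G)=\emptyset$ to simplify the classification of halfspaces. For part~(1), I would work with the core $\mc{C}(G)$, which is nonempty by Theorem~\ref{abstract core}(1). Lemma~\ref{halfspaces of core}(1) combined with $\mc{H}_1(G)=\emptyset$ gives $\mscr{H}(\mc{C}(G))=\mc{H}_0(G,\mc{C}(G))$, and then Lemma~\ref{finite orbit new lem} applied to the induced action $G\acts\mc{C}(G)$ produces a finite $G$--orbit in $\mc{C}(G)$. Lemma~\ref{from finite orbit to finite cube 1} then yields the required $G$--invariant median subalgebra $C\simeq\{0,1\}^k$ inside $\mc{C}(G)\cu M$; the bound $k\leq r$ is automatic, since the $k$ pairwise-transverse walls of $C$ restrict from $k$ pairwise-transverse walls of $M$.

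For part~(2), I would work instead with the reduced core $\overline{\mc{C}}(G)$, which is nonempty by Theorem~\ref{abstract core}(2) under the no-wall-inversion hypothesis. Again Lemma~\ref{halfspaces of core}(1) together with $\mc{H}_1(G)=\emptyset$ gives $\mscr{H}(\overline{\mc{C}}(G))=\overline{\mc{H}}_0(G,\overline{\mc{C}}(G))$, so by Remark~\ref{factors through finite group} the action $G\acts\overline{\mc{C}}(G)$ factors through a finite quotient. Applying Lemma~\ref{from finite orbit to finite cube 1} inside $\overline{\mc{C}}(G)$ produces a $G$--invariant cube subalgebra $C'\simeq\{0,1\}^\ell$, and it suffices to exhibit a $G$--fixed vertex of $C'$. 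Any action of a group on $\{0,1\}^\ell$ by median automorphisms identifies with a homomorphism into the hyperoctahedral group $(\mathbb{Z}/2)^\ell\rtimes S_\ell$, and the no-wall-inversion condition forces the sign component of each element to vanish on the coordinates fixed by its permutation part; an orbit-by-orbit coboundary construction (or, equivalently, an application of Shapiro's lemma to the resulting $1$--cocycle) then exhibits a fixed vertex.

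The main obstacle is that the subalgebra $C'$ furnished by Lemma~\ref{from finite orbit to finite cube 1} need not be convex inside $\overline{\mc{C}}(G)$, so Remark~\ref{gate-convex inversions} does not automatically transfer the no-wall-inversion hypothesis from $M$ to $C'$; in fact, a pair of transverse halfspaces of $\overline{\mc{C}}(G)$ whose $G$--orbits cut across $C'$ with opposite sides can induce a wall inversion on $C'$ that has no counterpart on $M$. The natural remedy is to replace $C'$ by its convex hull in $\overline{\mc{C}}(G)$, which is $G$--invariant and convex (hence without wall inversions by Remark~\ref{gate-convex inversions}), and then to exploit the $\overline{\mc{H}}_0$--structure of $\overline{\mc{C}}(G)$ (every halfspace is either $G$--invariant or has pairwise-transverse $G$--translates) to locate, inside this convex hull, a cube subalgebra on which $G$ acts without wall inversions. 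The fixed-vertex argument above then completes the proof.
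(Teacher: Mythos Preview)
Your treatment of part~(1) is identical to the paper's: pass to $\mc{C}(G)$, use Lemma~\ref{halfspaces of core}(1) to obtain $\mscr{H}(\mc{C}(G))=\mc{H}_0(G,\mc{C}(G))$, and then apply Lemmas~\ref{finite orbit new lem} and~\ref{from finite orbit to finite cube 1}.

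For part~(2) you begin as the paper does (pass to $\overline{\mc{C}}(G)$, reduce to a finite group via Remark~\ref{factors through finite group}) and you correctly isolate the genuine difficulty: the cube $C'$ produced by Lemma~\ref{from finite orbit to finite cube 1} is only a subalgebra, not a convex subset, so the no-wall-inversion hypothesis need not descend to it. However, your proposed remedy does not close this gap. Passing to the convex hull of $C'$ simply returns you to a $G$--invariant convex subset all of whose halfspaces lie in $\overline{\mc{H}}_0$---exactly the situation you had in $\overline{\mc{C}}(G)$ to begin with; locating a cube without wall inversions is no easier there than in $\overline{\mc{C}}(G)$ itself. Your parenthetical observation that every $G$--orbit of walls is pairwise transverse is indeed the right raw material, but the sentence ``locate, inside this convex hull, a cube subalgebra on which $G$ acts without wall inversions'' is a restatement of the goal rather than a construction.

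The paper builds the required cube directly from the wall structure, bypassing Lemma~\ref{from finite orbit to finite cube 1} altogether. One chooses a \emph{maximal} $G$--invariant set $\mc{U}$ of pairwise-transverse walls; because every $G$--orbit of walls is already pairwise transverse (this is your parenthetical, combined with the no-inversion hypothesis), $\mc{U}$ is in fact maximal among \emph{all} sets of pairwise-transverse walls. One then takes a $G$--invariant finite set $F$ meeting every sector of $\mc{U}$, generates a finite subalgebra $N$, realises it as a finite cube complex, and lets $C$ be the maximal cube in $N$ cut out by $\mc{U}$. The $G$--invariance of $\mc{U}$ and the no-wall-inversion hypothesis on $M$ then force $G\acts C$ to be without wall inversions, after which a fixed vertex follows (your coboundary/Shapiro observation is one way to see this last step). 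The use of a \emph{maximal} $G$--invariant transverse family is the key idea missing from your outline.
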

\begin{proof}
By part~(1) of Theorem~\ref{abstract core} and part~(1) of Lemma~\ref{halfspaces of core}, we can pass to the core and assume that $\mscr{H}(M)=\mc{H}_0(G)$. Part~(1) then immediately follows from Lemmas~\ref{finite orbit new lem} and~\ref{from finite orbit to finite cube 1}.

Now, if we could deduce that the $G$--action on $C\simeq\{0,1\}^k$ has no wall inversions from the fact that $G\acts M$ has no wall inversions, then part~(2) would also follow immediately. Unfortunately, this is not true in general and part~(2) will require more work. Our objective will be to construct a \emph{specific} subalgebra $C\simeq\{0,1\}^k$ such that $G\acts C$ has no wall inversions.

We begin by passing to the reduced core and assuming that $\mscr{H}(M)=\overline{\mc{H}}_0(G)$. This is allowed because of part~(2) of Theorem~\ref{abstract core}, part~(1) of Lemma~\ref{halfspaces of core}, and Remark~\ref{gate-convex inversions}. By Remark~\ref{factors through finite group}, we can moreover assume that $G$ is a finite group.

Recall that every set of pairwise-transverse walls of $M$ has cardinality $\leq r=\rk M$. Thus, there exists a nonempty subset $\mc{U}\cu\mscr{W}(M)$ that is maximal among $G$--invariant sets of pairwise-transverse walls. Since $\mscr{H}(M)=\overline{\mc{H}}_0(G)$ and $G\acts M$ has no wall inversions, every $G$--orbit in $\mscr{W}(M)$ consists of pairwise-transverse walls. This implies that $\mc{U}$ is also maximal among (not necessarily $G$--invariant) sets of pairwise-transverse walls. Indeed, if there existed $\mf{w}\in\mscr{W}(M)$ with $\mc{U}\sqcup\{\mf{w}\}$ pairwise-transverse, then $\mc{U}\sqcup G\cdot\mf{w}$ would be pairwise-transverse and $G$--invariant.

In conclusion, there exists a maximal set of pairwise-transverse walls $\mc{U}\cu\mscr{W}(M)$ that is also $G$--invariant. Let $\mf{h}_1^+,\dots,\mf{h}_k^+$ be a choice of halfspace for every element of $\mc{U}$, and let $\mf{h}_1^-,\dots,\mf{h}_k^-$ be their complements. Choose a finite subset $F\cu M$ that intersects each of the $2^k$ sectors of the form $\mf{h}_1^{\pm}\cap\dots\cap\mf{h}_k^{\pm}$. Since $G$ is finite, we can take $F$ to be $G$--invariant.

Let $N\cu M$ be the median subalgebra generated by $F$. Note that $N$ is $G$--invariant and, by \cite[Lemma~4.2]{Bow-cm}, it is finite. It was observed in \cite[Section~3]{Bow4} that every finite median algebra is naturally isomorphic to the $0$--skeleton of a finite $\CAT$ cube complex. Let $X$ be the cube complex with $0$--skeleton isomorphic to $N$. Since the induced action $G\acts X^{(0)}$ preserves the median operator of $X$, it extends to an action by cubical automorphisms on the entire $X$. % the same argument appears in Lemma~\ref{from finite orbit to finite cube 1}, so maybe we should make it a separate remark? 

By our choice of $F$, the intersections $\mf{k}_i=\mf{h}_i^+\cap N$ are pairwise-transverse halfspaces of $N$. % they are well-defined halfspaces, they are pairwise-distinct, they are pairwise-transverse. 
By part~(1) of Remark~\ref{halfspaces of subsets}, this is a \emph{maximal} collection of pairwise-transverse halfspaces of $N$. The corresponding walls of $N$ determine a maximal set of pairwise-transverse hyperplanes of $X$, which is also $G$--invariant. Thus, they correspond to a maximal cube $C\cu X$ that is preserved by $G$. 

We conclude the proof by showing that $G$ acts on $C$ without wall inversions (which implies that $G$ fixes a vertex of $C$). Suppose for the sake of contradiction that there exists an element $g\in G$ such that $g\mf{k}_i=\mf{k}_i^*$ for some $i$. This implies that $g\mf{h}_i^+\cap\mf{h}_i^+\cap N=\emptyset$. However, since $\mc{U}$ is $G$--invariant, there exists an index $j$ with $g\mf{h}_i^+=\mf{h}_j^{\pm}$. Thus, $g\mf{h}_i^+\cap\mf{h}_i^+\cap N$ contains one of the two nonempty sets $\mf{h}_j^{\pm}\cap\mf{h}_i^+\cap F$. This is the required contradiction.
\end{proof}

Recall from Definition~\ref{ess/min defn} that the action $G\acts M$ is said to be \emph{essential} if $\mscr{H}(M)=\mc{H}_1(G)$. One can often reduce to studying essential actions by relying on the following result, which also proves part~(2) of Theorem~\ref{cores intro}.

\begin{prop}\label{H=H_1}
If $G$ is finitely generated and $G\acts M$ has no wall inversions, then there exists a nonempty, $G$--invariant, convex subset $C\cu M$ such that $\mscr{H}_C(M)\cu\mc{H}_1(G,M)$.
\end{prop}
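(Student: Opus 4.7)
\emph{Plan.} The reduced core $C_0 := \overline{\mc{C}}(G,M)$ is nonempty, $G$-invariant, and convex by Theorem~\ref{abstract core}(2), with $\mscr{H}_{C_0}(M) = \overline{\mc{H}}_0(G,M) \sqcup \mc{H}_1(G,M)$ by Lemma~\ref{halfspaces of core}. It therefore suffices to find a nonempty, $G$-invariant, convex $C \cu C_0$ across which no halfspace in $\overline{\mc{H}}_0(G,M)$ crosses.

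I apply Zorn's lemma to the family $\mc{F}$ of nonempty, $G$-invariant, convex subsets of $C_0$, ordered by reverse inclusion. At a minimal element $C \in \mc{F}$, $\mscr{H}_C(M) \cap \overline{\mc{H}}_0(G,M)$ must be empty: otherwise, pick $\mf{h}$ in this intersection. Its $G$-orbit is finite (by definition of $\overline{\mc{H}}_0$) and pairwise transverse (using the no-wall-inversion hypothesis together with Remark~\ref{7 possibilities}), so Helly's lemma applied to the finite pairwise-intersecting family $\{C\} \cup G \cdot \mf{h}$ yields a nonempty $C' := C \cap \bigcap_{g \in G} g\mf{h} \in \mc{F}$, strictly smaller than $C$ (any point of $\mf{h}^* \cap C$ witnesses the strict inclusion), contradicting minimality. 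Combined with $\mscr{H}_C(M) \cu \mscr{H}_{C_0}(M)$, this yields the desired $\mscr{H}_C(M) \cu \mc{H}_1(G,M)$.

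The heart of the argument is then verifying the Zorn hypothesis: for any descending chain $\{C_\alpha\}$ in $\mc{F}$, the intersection $\bigcap_\alpha C_\alpha$ is nonempty (it is automatically $G$-invariant and convex, and its halfspaces, viewed in $M$, lie in $\overline{\mc{H}}_0 \sqcup \mc{H}_1$). I apply Lemma~\ref{nonempty intersection criterion} to the halfspace family $\mc{L} := \overline{\mc{H}}_{1/2}(G,M) \cup \bigcup_\alpha \s_{C_\alpha}$, whose total intersection equals $\bigcap_\alpha C_\alpha$ (since $C_0 = \bigcap \overline{\mc{H}}_{1/2}(G,M)$). Pairwise intersection of $\mc{L}$ follows from Remark~\ref{1/2 bar PWI}, the observation in the proof of Lemma~\ref{halfspaces of core}(2) that halfspaces in $\overline{\mc{H}}_{1/2}$ always meet those in $\overline{\mc{H}}_0$, and the nesting of $\{\s_{C_\alpha}\}$. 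To produce lower bounds for chains $\mscr{D} \cu \mc{L}$, I consider cofinal sub-chains: sub-chains cofinal in $\overline{\mc{H}}_{1/2}$ are handled by Lemma~\ref{previously part of the proof}(2); sub-chains cofinal in $\bigcup_\alpha \s_{C_\alpha}$ are handled via Remark~\ref{bigcap chain halfspace}, using the fact that each $\s_{C_\alpha}$ with $C_\alpha \neq \emptyset$ is closed under chain intersections (the intersection contains $C_\alpha$, hence is a nonempty halfspace in $\s_{C_\alpha} \cu \mc{L}$).

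The main obstacle is the chain-lower-bound verification in the Zorn step, especially for chains in $\mc{L}$ that straddle both $\overline{\mc{H}}_{1/2}$ and $\bigcup_\alpha \s_{C_\alpha}$; resolving this requires careful cofinality bookkeeping and interleaving Lemma~\ref{previously part of the proof}(2) with Remark~\ref{bigcap chain halfspace}.
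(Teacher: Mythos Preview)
Your Zorn argument has a genuine gap in the chain-condition verification, and in fact Zorn's hypothesis fails in general for the family $\mc{F}$.

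Take $M=\R$ with $G$ the trivial group (or any finite group acting trivially). Then $\overline{\mc{H}}_{1/2}(G)=\emptyset$, $C_0=\overline{\mc{C}}(G)=M$, and $\mc{F}$ is simply the collection of all nonempty intervals of $\R$. The chain $C_n=[n,\infty)$ lies in $\mc{F}$ but has empty intersection. Tracing this through your verification: $\mc{L}=\bigcup_n\s_{C_n}$ contains the chain $\mscr{D}=\{[n,\infty):n\geq 1\}$, for which $\bigcap\mscr{D}=\emptyset$, so $\mscr{D}$ has no lower bound at all, let alone one in $\mc{L}$. Your claim that ``each $\s_{C_\alpha}$ is closed under chain intersections'' is correct, but you need this for the \emph{union} $\bigcup_\alpha\s_{C_\alpha}$, and this would require the cofinal subchain to lie in a single $\s_{C_\alpha}$---which it need not, since the $\alpha$ witnessing $\mf{h}\in\s_{C_\alpha}$ may drift along the chain with no common choice. (Your claimed pairwise-intersection of $\mc{L}$ is fine, and more simply than you argue: each $C_\alpha$ lies in every element of $\mc{L}$.)

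The paper avoids this obstruction by a genuinely different route: after passing to the core so that $\mc{H}_{1/2}=\emptyset$, it forms the restriction quotient $M(\mc{U})$ by the walls in $\mc{H}_0(G)$, shows via Lemma~\ref{H_1 not saturated} that the halfspaces of the quotient are exactly those coming from $\mc{H}_0(G)$, and then applies Proposition~\ref{finite orbit new prop}(2) to produce a $G$--fixed point in the quotient; the fibre over this fixed point is the desired $C$. The substantive content you are missing is precisely this fixed-point step---it relies on Lemma~\ref{finite orbit new lem} and a careful choice of invariant cube in Proposition~\ref{finite orbit new prop}, and cannot be replaced by an abstract Zorn argument on $\mc{F}$. (Note that minimal elements of $\mc{F}$ do exist---by Corollary~\ref{essential vs minimal cor}(1), any $C$ with $\mscr{H}_C(M)\cu\mc{H}_1(G)$ is minimal---but their existence is essentially equivalent to the proposition itself.)
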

\begin{proof}
By Theorem~\ref{abstract core}, the core $\mc{C}(G)\cu M$ is nonempty. By Remark~\ref{gate-convex inversions}, the action $G\acts\mc{C}(G)$ has no wall inversions. Thus, it is not restrictive to assume that $M=\mc{C}(G)$, i.e.\ that $\mc{H}_{1/2}(G,M)=\emptyset$. 

Let $\mc{U}\cu\mscr{W}(M)$ be the set of walls that bound halfspaces in $\mc{H}_0(G,M)$. Let $\pi_{\mc{U}}\colon M\ra M(\mc{U})$ be the corresponding restriction quotient, as defined in Subsection~\ref{restriction quotients sect}. The image of the induced map $\pi_{\mc{U}}^*\colon\mscr{H}(M(\mc{U}))\ra\mscr{H}(M)$ coincides with the set of $\sim_{\mc{U}}$--saturated halfspaces of $M$. By Lemma~\ref{H_1 not saturated}, no element of $\mc{H}_1(G,M)$ is $\sim_{\mc{U}}$--saturated. Hence the image of $\pi_{\mc{U}}^*$ is exactly $\mc{H}_0(G,M)$. 

We conclude that $\mc{H}_0(G,M(\mc{U}))=\mscr{H}(M(\mc{U}))$. It is clear that $G\acts M(\mc{U})$ has no wall inversions, so part~(2) of Proposition~\ref{finite orbit new prop} yields a $G$--fixed point $x_0\in M(\mc{U})$. The fibre $C:=\pi_{\mc{U}}^{-1}(x_0)$ is nonempty, $G$--invariant and convex. Since $C$ consists of a single $\sim_{\mc{U}}$--equivalence class, $\mscr{H}_C(M)$ and $\mc{H}_0(G,M)$ are disjoint. Hence $\mscr{H}_C(M)\cu\mc{H}_1(G,M)$. 
\end{proof}

\begin{rmk}\label{H=H_1 rmk}
Even if $G\acts M$ does have wall inversions, the conclusion of Proposition~\ref{H=H_1} always holds for a subgroup $H\leq G$ of index at most $2^r$. This can be proved exactly as above, simply appealing to part~(1) of Proposition~\ref{finite orbit new prop} rather than part~(2).
\end{rmk}

The proof of Theorem~\ref{cores intro} is completed by the following lemma, which does not make any finite generation assumption on the group $G$. Recall from Definition~\ref{ess/min defn} that the action $G\acts M$ is \emph{minimal} if $M$ does not contain any proper, $G$--invariant, convex subsets (a convex subset is \emph{proper} if it is nonempty and not the entire $M$). 

\begin{lem}\label{essential vs minimal cor}
\begin{enumerate}
\item[]
\item If $G\acts M$ is essential, then it is minimal.
\item If $G\acts M$ is minimal and without wall inversions, then it is essential.
\end{enumerate}
\end{lem}
\begin{proof}
We begin with the proof of part~(1). Suppose for the sake of contradiction that $G\acts M$ is essential, but there exists a proper, $G$--invariant, convex subset $C\cu M$. Let $\s_C\cu\mscr{H}(M)$ be the set of halfspaces containing $C$. Note that $\s_C$ is $G$--invariant and, since $C\neq M$, we have $\s_C\neq\emptyset$. 

Every chain $\mscr{C}\cu\s_C$ has a lower bound in $C$. Indeed, $\bigcap\mscr{C}$ is nonempty, so it must be a halfspace of $M$ containing $C$. Zorn's lemma guarantees the existence of a minimal element $\mf{h}\in\s_C$. However, since $G\acts M$ is essential, there exists $g\in G$ such that $g\mf{h}\subsetneq\mf{h}$. Since $\s_C$ is $G$--invariant, we have $g\mf{h}\in\s_C$, obtaining the required contradiction.

We now prove part~(2). Let $G\acts M$ be minimal and without wall inversions. If this action were not essential, there would exist a halfspace $\mf{h}\in\overline{\mc{H}}_0(G)\cup\overline{\mc{H}}_{1/2}(G)$. Remark~\ref{1/2 bar PWI} and the fact that there are no wall inversions imply that the orbit $G\cdot\mf{h}$ consists of pairwise intersecting halfspaces. In addition, since $\mf{h}\not\in\mc{H}_1(G)$, the only chains contained in $G\cdot\mf{h}$ are singletons. Thus, Lemma~\ref{nonempty intersection criterion} implies that $\bigcap(G\cdot\mf{h})$ is a proper, $G$--invariant, convex subset of $M$, violating minimality of the $G$--action.
\end{proof}

\begin{rmk}\label{essential vs minimal rmk}
When wall inversions are allowed, one can still conclude that $G\acts M$ is essential if the restriction $H\acts M$ is minimal for every subgroup $H\leq G$ of index $\leq 2^r$.

Indeed, by the same argument as in Remark~\ref{1/2 bar PWI}, if $\mf{h},\mf{k}\in\mscr{H}(M)\setminus\mc{H}_1(G)$ are halfspaces with $\mf{h}^*\in G\cdot\mf{h}$ and $\mf{k}^*\in G\cdot\mf{k}$, then $\mf{h}$ and $\mf{k}$ are transverse. Thus, there are at most $r$ such halfspaces and a subgroup $H\leq G$ of index $\leq 2^r$ leaves each of them invariant. The subgroup $H$ can only invert halfspaces in $\mc{H}_1(G)=\mc{H}_1(H)$, so we can run the argument used to prove part~(2) of Lemma~\ref{essential vs minimal cor}.
\end{rmk}

Proposition~\ref{H=H_1} and Lemma~\ref{essential vs minimal cor} prove Theorem~\ref{cores intro} from the introduction.

\subsection{Semisimple automorphisms.}\label{ss sect}

This subsection is devoted to deducing from Proposition~\ref{H=H_1} that automorphisms of finite-rank median algebras are (almost) semisimple.

The key result here is Lemma~\ref{ss lemma}, which makes significant use of the notion of \emph{zero-completion} of a median algebra. We briefly recall all necessary information on this object, which was originally introduced by Bandelt and Meletiou in \cite{Bandelt-Meletiou} and which we further developed in \cite[Subsection~4.1]{Fio1}. The reader should keep in mind the situation where $M$ is the vertex set of a $\CAT$ cube complex, in which case the zero-completion is just the Roller compactification.

%\medskip
In general, the zero-completion of $M$ is the median algebra $\overline M$ defined as follows. Consider the set of all intervals $\I:=\{I(x,y)\mid x,y\in M\}$. Points of $\overline M$ are functions $x\colon\I\ra M$ satisfying:
\begin{itemize}
\item $x(I)\in I$ for all $I\in\I$;
\item $\pi_{I\cap J}(x(I))=\pi_{I\cap J}(x(J))$ for all $I,J\in\I$ with $I\cap J\neq\emptyset$, where $\pi_{I\cap J}\colon M\ra I\cap J$ denotes the gate-projection.
\end{itemize}
The median operator of $\overline M$ is given by $m(x,y,z)(I)=m(x(I),y(I),z(I))$. We can define a map $\iota\colon M\ra\overline M$ by setting $\iota(x)(I)=\pi_I(x)$, where $\pi_I\colon M\ra I$ is the gate-projection. 

In \cite[Subsection~4.1]{Fio1}, we showed the following:

\begin{thm}
The zero-completion $\overline M$ is a median algebra with $\rk\overline M=\rk M$. The map $\iota\colon M\ra\overline M$ is an $(\aut M)$--equivariant, injective median morphism with convex image.
\end{thm}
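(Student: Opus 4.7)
The plan is to verify each assertion in sequence, most of them following by short computations once the definition of $\overline M$ is unpacked.

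First, I would check that the pointwise formula $m(x,y,z)(I) := m(x(I),y(I),z(I))$ defines an element of $\overline M$. The value lies in $I$ by convexity of $I$, and compatibility across overlapping $I, J \in \I$ reduces, upon distributing the gate-projection $\pi_{I\cap J}$ (itself a median morphism) over the median, to the compatibility already built into $x,y,z$. The three median axioms then hold pointwise on $\I$. The map $\iota$ is a median morphism by the same principle: $\iota(m(x,y,z))(I) = \pi_I(m(x,y,z)) = m(\pi_I(x),\pi_I(y),\pi_I(z)) = m(\iota(x),\iota(y),\iota(z))(I)$. Injectivity of $\iota$ follows by evaluating at $I = I(x,y)$, where the gate-projection restricts to the identity on $\{x,y\}$. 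Equivariance under $\aut M$ is routine once one sets $(g \cdot u)(I) := g(u(g^{-1}I))$ and uses $g \o \pi_{g^{-1}I} = \pi_I \o g$.

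Next, for convexity of $\iota(M) \cu \overline M$: given $\iota(x), \iota(y) \in \iota(M)$ and $u \in \overline M$, I would take $z := u(I(x,y)) \in I(x,y)$ and argue $m(\iota(x), \iota(y), u) = \iota(z)$. This reduces to proving $\pi_J(z) = m(\pi_J(x), \pi_J(y), u(J))$ for every $J \in \I$. Split on whether $I(x,y) \cap J$ is empty: in the nonempty case, combine the compatibility of $u$ between $J$ and $I(x,y) \cap J$ with the identity $\pi_{I(x,y)}(z) = z$; in the empty case, gate-projections of the two disjoint gate-convex sets collapse to a single bridge point and both sides of the identity reduce to it.

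Finally, $\rk M \leq \rk \overline M$ is immediate from injectivity of $\iota$, and the reverse inequality is the main obstacle. My plan is to combine convexity of $\iota(M)$ with part~(2) of Remark~\ref{halfspaces of subsets}: any wall of $\overline M$ whose two sides both meet $\iota(M)$ restricts to a wall of $M$, with transversality preserved. The delicate step is to rule out that some halfspace $\mf h \cu \overline M$ avoids $\iota(M)$ entirely. Given $u \in \mf h$, I would attempt to extract a point $x \in M$ with $\iota(x) \in \mf h$ by exploiting that the coordinates $u(I)$ already lie in $M$ and invoking Lemma~\ref{nonempty intersection criterion} for a carefully chosen family of halfspaces cut out by $\iota$-approximations of $u$. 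Granting this, any collection of $k$ pairwise-transverse walls of $\overline M$ restricts to $k$ pairwise-transverse walls of $M$, yielding $\rk \overline M \leq \rk M$.
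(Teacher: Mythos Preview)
The paper does not prove this theorem; it is quoted verbatim from \cite[Subsection~4.1]{Fio1}, so there is no in-paper argument to compare against. That said, your outline for the median-algebra structure on $\overline M$, and for $\iota$ being an injective, equivariant median morphism, is correct and essentially forced by the definitions. The convexity argument is plausible in outline, though the ``$I(x,y)\cap J=\emptyset$'' case deserves more than a one-line gesture at bridge points.

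The genuine problem is the rank bound $\rk\overline M\le\rk M$. Your plan is to show that every halfspace of $\overline M$ meets $\iota(M)$, and then restrict transverse families of walls via Remark~\ref{halfspaces of subsets}. But the premise is false. Take $M=\R$: the zero-completion is the extended line $[-\infty,+\infty]$, and the singleton $\{+\infty\}$ is a halfspace of $\overline M$ (both it and its complement $[-\infty,+\infty)$ are convex and nonempty) that is disjoint from $\iota(M)=\R$. No invocation of Lemma~\ref{nonempty intersection criterion} will manufacture a point of $\R$ inside $\{+\infty\}$, so the ``delicate step'' you flag cannot be carried out.

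A viable route to $\rk\overline M\le\rk M$ has to go through the evaluation morphisms $\mathrm{ev}_I\colon\overline M\to I\cu M$, which are median morphisms by construction, rather than through restriction of halfspaces. One cannot in general find a single $I$ on which $\mathrm{ev}_I$ is injective on a given finite subset of $\overline M$ (already four ends of a $4$--pod defeat this), so the argument in \cite{Fio1} is more delicate than a one-line reduction; you should consult that reference rather than try to improvise it.
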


Thus, we will not distinguish between $M$ and $\iota(M)\cu\overline M$. Moreover, we will identify the sets $\mscr{H}_M(\overline M)$ and $\mscr{H}(M)$, which is allowed by part~(2) of Remark~\ref{halfspaces of subsets}. We write $\partial M:=\overline M\setminus M$.

Note that a point $(x\colon\I\ra M)\in\overline M$ lies in a halfspace $\mf{h}\in\mscr{H}(M)\simeq\mscr{H}_M(\overline M)$ if and only if $x(I)\in\mf{h}$ for every $I$ such that $\mf{h}\in\mscr{H}_I(M)$ (see \cite[pp.~1356--1357]{Fio1}).

%\medskip
We are now ready to state the main result of this subsection.

\begin{lem}\label{ss lemma}
Consider $g\in\aut M$. Suppose that $\mc{H}_1(g)=\mscr{H}(M)$. Denote by $\mc{H}^+$ (resp. $\mc{H}^-$) the set of halfspaces $\mf{h}\in\mscr{H}(M)$ such that there exists $n\geq 1$ with $g^n\mf{h}\subsetneq\mf{h}$ (resp.\ $\mf{h}\subsetneq g^n\mf{h}$) .
\begin{enumerate}
\item For every $\mf{h}\in\mscr{H}(M)$, we have $\bigcap_{n\in\Z}g^n\mf{h}=\emptyset$ and $\bigcup_{n\in\Z}g^n\mf{h}=M$.
\end{enumerate} 
\begin{enumerate}
\setcounter{enumi}{1}
\item We have a $\langle g\rangle$--invariant partition $\mscr{H}(M)=\mc{H}^-\sqcup\mc{H}^+$. Any two halfspaces in $\mc{H}^+$ intersect, and so do any two halfspaces in $\mc{H}^-$. Moreover, $\mc{H}^-=(\mc{H}^+)^*$.
\item There exists a unique point $\xi^+\in\partial M$ (resp.\ $\xi^-\in\partial M$) that lies in every halfspace in $\mc{H}^+$ (resp.\ $\mc{H}^-$). We have $g\xi^+=\xi^+$, $g\xi^-=\xi^-$ and $M\cu I(\xi^-,\xi^+)$.
\item There exists $x\in M$ such that the sets $\mscr{W}(g^nx|g^{n+1}x)$ are pairwise disjoint for $n\in\Z$.
\item If no $\mf{h}\in\mscr{H}(M)$ is transverse to $g\mf{h}$, then part~(4) holds for all $x\in M$.
\end{enumerate}
\end{lem}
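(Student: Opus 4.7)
Parts (1) and (2) follow from chain manipulations leveraging the hypothesis $\mc{H}_1(g) = \mscr{H}(M)$, which via Lemma~\ref{G and h} forces $\mc{H}_0(g) = \mc{H}_{1/2}(g) = \emptyset$. For (1), fix $\mf{h}$ and pick $n \in \Z\setminus\{0\}$ with $g^n\mf{h} \subsetneq \mf{h}$: the descending chain $\{g^{kn}\mf{h}\}_{k\geq 0}$ has empty intersection, since a nonempty one would be a halfspace (Remark~\ref{bigcap chain halfspace}) fixed by $g^n$, placing it in $\mc{H}_0(g) = \emptyset$. Complementing yields $\bigcup_m g^m\mf{h} = M$. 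For (2), the sign of any such $n$ is forced (having both $g^n\mf{h} \subsetneq \mf{h}$ and $\mf{h} \subsetneq g^m\mf{h}$ with $n, m \geq 1$ makes $g^{nm}\mf{h}$ both a proper sub- and super-set of $\mf{h}$), and $\mc{H}^- = (\mc{H}^+)^*$ is immediate. For pairwise intersection of $\mc{H}^+$: if $\mf{h} \cap \mf{k} = \emptyset$, choose $N \geq 1$ with $g^N\mf{h} \subsetneq \mf{h}$ and $g^N\mf{k} \subsetneq \mf{k}$; then $g^{jN}\mf{h} \subseteq \mf{k}^*$ for all $j \in \Z$ (forward iterates of $\mf{h}$ lie in $\mf{h} \subseteq \mf{k}^*$; for $j < 0$, $\mf{h} \cap g^{-jN}\mf{k} \subseteq \mf{h} \cap \mf{k} = \emptyset$), contradicting $\bigcup_j g^{jN}\mf{h} = M$.

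Part (3) is the main analytic step, carried out in the zero-completion $\overline M$. Halfspaces of $M$ extend canonically to halfspaces of $\overline M$ via the convex inclusion $M \hookrightarrow \overline M$ (Remark~\ref{halfspaces of subsets}). I would apply Lemma~\ref{nonempty intersection criterion} in $\overline M$ to the family $\mc{H}^+$: pairwise intersection transfers from $M$, and the required chain lower bound should be supplied by the characteristic feature of the zero-completion (constructed via consistent interval-valued functions) that descending chains of halfspaces always admit halfspace intersections there. Uniqueness of the resulting point $\xi^+$ follows because, by part (2), $\mc{H}^+$ already determines a choice of side for every wall of $M$. Part (1) gives $\xi^+ \in \partial M$ (since $\bigcap_k g^k\mf{h} \cap M = \emptyset$ for any $\mf{h} \in \mc{H}^+$), and $\langle g\rangle$-invariance of $\mc{H}^+$ gives $g\xi^+ = \xi^+$. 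For $M \subseteq I(\xi^-, \xi^+)$: the walls of $M$ separating $x \in M$ from $\xi^-$ are those in $\mc{H}^+$ containing $x$, while those separating $x$ from $\xi^+$ are in $\mc{H}^-$; these are disjoint sets of walls, so $x \in I(\xi^-, \xi^+)$.

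For parts (4) and (5), I would prove (5) first. Under the non-transversality hypothesis, case analysis on the seven possibilities in Remark~\ref{7 possibilities} (combined with parts (1) and (2) and pairwise intersection of both $\mc{H}^+$ and $\mc{H}^-$) shows that every $\mf{h} \in \mc{H}^+$ satisfies $g\mf{h} \subsetneq \mf{h}$: equality is ruled out by $\mc{H}_0 = \emptyset$, wall inversions and facing by pairwise intersection in $\mc{H}^\pm$, and $\mf{h} \subsetneq g\mf{h}$ by disjointness of $\mc{H}^+$ and $\mc{H}^-$. Then for any $x \in M$ and $\mf{w} = \{\mf{h}, \mf{h}^*\}$ with $\mf{h} \in \mc{H}^+$: $x \in \mf{h}$ would force $gx \in g\mf{h} \subseteq \mf{h}$, so $\mf{w}$ separates $x, gx$ only if $x \in \mf{h}^*$ and $gx \in \mf{h}$. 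For $k \geq 1$, $g^k\mf{h} \subseteq \mf{h}$ gives $x \in g^k\mf{h}^*$, and $gx \in g\mf{h}^* \subseteq g^k\mf{h}^*$. For $k \leq -1$, iterating $g\mf{h} \subseteq \mf{h}$ gives $g^jx \in \mf{h}$ for $j \geq 0$, so $x \in g^{-j}\mf{h}$, and $gx \in \mf{h} \subseteq g^{-j}\mf{h}$. Either way $g^k\mf{w}$ does not separate $x, gx$, so $\mscr{W}(g^n x | g^{n+1} x) = g^n \mscr{W}(x|gx)$ are pairwise disjoint. For (4), without the non-transversality hypothesis, I would reduce to (5) via a $\langle g\rangle$-invariant restriction quotient $M \to M(\mc{U})$, where $\mc{U} \subseteq \mscr{W}(M)$ consists of walls whose $g$-orbit is pairwise non-transverse; applying (5) in $M(\mc{U})$ and lifting an axis point through the quotient gives the required $x \in M$.

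The main obstacle is part (3): verifying that Lemma~\ref{nonempty intersection criterion} applies in $\overline M$, specifically that chains in $\mc{H}^+$ admit lower bounds within $\mc{H}^+$ rather than merely in the full halfspace poset of $\overline M$. If a chain intersection is a ``boundary halfspace'' disjoint from $M$, further work is needed to reinterpret it or to replace it with an appropriate element of $\mc{H}^+$. A secondary obstacle is the reduction in part (4): verifying that $\mc{U}$ is large enough and that the induced action on $M(\mc{U})$ inherits the hypothesis $\mc{H}_1 = \mscr{H}$ needed for (5), as well as that a resulting axis point lifts properly through the quotient map.
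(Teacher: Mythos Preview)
Your arguments for parts~(1), (2) and~(5) are correct and essentially match the paper's. The substantive differences are in parts~(3) and~(4).

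\textbf{Part~(3).} The paper does not apply Lemma~\ref{nonempty intersection criterion} in $\overline M$. Instead it works interval-by-interval inside $M$: for each interval $I\subseteq M$, it applies Lemma~\ref{nonempty intersection criterion} to the set $\sigma_I\sqcup(\mc{H}^+\cap\mscr{H}_I(M))$ and obtains a unique point $z_I^+\in I$; the assignment $I\mapsto z_I^+$ then \emph{is} the point $\xi^+\in\overline M$, by the very definition of the zero-completion as compatible interval-valued functions. This sidesteps exactly the obstacle you identified (chains in $\mc{H}^+$ need not have lower bounds in $\mc{H}^+$ once you pass to $\overline M$), because within a fixed interval the chain condition is easy to verify. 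Your global approach in $\overline M$ is not obviously wrong, but you would need additional input about how halfspaces behave in the zero-completion, and the paper's interval-wise construction is both simpler and self-contained.

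\textbf{Part~(4).} Here your restriction-quotient idea has a genuine gap, and the paper's approach is entirely different. Two problems with the quotient: first, your set $\mc{U}$ of walls with pairwise non-transverse $g$--orbit may be too small --- for instance, if every wall $\mf{w}$ is transverse to $g\mf{w}$ (while $g^2\mf{w}$ nests with $\mf{w}$, consistent with $\mc{H}_1(g)=\mscr{H}(M)$), then $\mc{U}=\emptyset$ and $M(\mc{U})$ is a point. Second, and more fundamentally, even when $\mc{U}$ is large, lifting an axis point $\bar x\in M(\mc{U})$ to $x\in M$ only controls the walls in $\mc{U}$: walls outside $\mc{U}$ can still lie in $\mscr{W}(g^nx|g^{n+1}x)\cap\mscr{W}(g^mx|g^{m+1}x)$ for $n\neq m$. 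The paper instead builds the axis point directly by iteration: starting from any $x_0\in M$, set $x_{i+1}=m(x_i,gx_i,\xi^+)$. A short computation gives $\mscr{W}(gx_i|x_{i+1})=\mscr{W}(gx_0,\dots,g^{i+1}x_0\mid \xi^+,x_0)$, and a separate claim shows $\mscr{W}(x,\xi^+\mid gx,\dots,g^rx)=\emptyset$ for all $x$ (using that $r+1$ halfspaces cannot be pairwise transverse). Hence $x_{r-1}\in\Min(g)$. Part~(5) then falls out of the same claim with $r$ replaced by $1$, thanks to the non-transversality hypothesis.
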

\begin{proof}
We begin with part~(1). For every $\mf{h}\in\mscr{H}(M)=\mc{H}_1(g)$, there exists $k\in\Z$ such that $g^k\mf{h}\subsetneq\mf{h}$. Note that $\bigcap_{n\in\Z}g^{nk}\mf{h}$ and its complement $\bigcup_{n\in\Z}g^{nk}\mf{h}^*$ are convex and $\langle g^k\rangle$--invariant. 
% for the union to be clearly convex, it is important that the union is *ascending*
Thus, if $\bigcap_{n\in\Z}g^{nk}\mf{h}$ were nonempty, it would be a $\langle g^k\rangle$--invariant halfspace, contradicting the assumption that $\mc{H}_0(g)=\emptyset$. We conclude that $\bigcap_{n\in\Z}g^n\mf{h}\cu\bigcap_{n\in\Z}g^{nk}\mf{h}=\emptyset$. Since this holds for every $\mf{h}\in\mscr{H}(M)$, we also have $\bigcap_{n\in\Z}g^n\mf{h}^*=\emptyset$ and hence $\bigcup_{n\in\Z}g^n\mf{h}=M$. 

Let us prove part~(2). The only statement requiring a proof is that any two halfspaces $\mf{h},\mf{k}\in\mc{H}^+$ must intersect. The fact that $\mc{H}^-$ has the same property can be proved similarly. 

Suppose for the sake of contradiction that $\mf{h}\cap\mf{k}=\emptyset$ and that there exist $k,m\geq 1$ with $g^k\mf{h}\subsetneq\mf{h}$ and $g^m\mf{k}\subsetneq\mf{k}$. Replacing $k$ and $m$ by $km$, we can assume that $k=m$. Then, for every $n\geq 1$, we have $g^{kn}\mf{k}\subsetneq\mf{k}\cu\mf{h}^*$, hence $\mf{k}\cu g^{-kn}\mf{h}^*\subsetneq\mf{h}^*$. Thus, $\emptyset\neq\mf{k}\cu\bigcap_{n\in\Z}g^{kn}\mf{h}^*$, which contradicts part~(1).

We now prove part~(3). Let $I\cu M$ be an interval and let $\s_I\cu\mscr{H}(M)$ be the set of halfspaces that contain it. By Lemma~\ref{nonempty intersection criterion}, the intersection of all halfspaces in the set $\s_I\sqcup(\mc{H}^+\cap\mscr{H}_I(M))$ is nonempty. Since $\mc{H}^+$ contains a side of every wall of $M$, this intersection consists of a single point, which we denote by $z_I^+$. Note that $z_I^+\in I$. Similarly, there exists a unique point $z_I^-\in I$ that lies in every halfspace of the set $\s_I\sqcup(\mc{H}^-\cap\mscr{H}_I(M))$. It is clear that every wall of $I$ separates the points $z_I^{\pm}$, hence $I=I(z_I^-,z_I^+)$. It is straightforward to check that the functions $I\mapsto z_I^+$ and $I\mapsto z_I^-$ define points $\xi^+,\xi^-\in\overline M$ such that $M\cu I(\xi^-,\xi^+)$, and that these points are fixed by $g$. Since $\mc{H}_1(g)=\mscr{H}(M)$, no point of $M$ is fixed by $g$, so we must have $\xi^+,\xi^-\in\partial M$, completing the proof of part~(3).

Let us address part~(4). We begin with the following:

\smallskip
{\bf Claim:} \emph{for every $x\in M$, the set $\mscr{W}(x,\xi^+ \mid gx,g^2x,\dots,g^rx)$ is empty.} 

\smallskip\noindent
\emph{Proof of Claim.} Consider a halfspace $\mf{h}\in\mscr{H}(x,\xi^+ \mid gx,g^2x,\dots,g^rx)$. Recall from part~(3) that $M\cu I(\xi^-,\xi^+)$. Thus, since $\xi^+\in\mf{h}^*$, we must have $\xi^-\in\mf{h}$. 

For each $1\leq i\leq r$, either $g^i\mf{h}\subsetneq\mf{h}$ or the halfspaces $g^i\mf{h}$ and $\mf{h}$ are transverse. Indeed, observing that $\mf{h}\in\mscr{H}(\xi^+ \mid g^ix,\xi^-)$ and $g^i\mf{h}\in\mscr{H}(g^ix,\xi^+ \mid \xi^-)$, we see that the three intersections $\mf{h}\cap g^i\mf{h}$, $\mf{h}\cap g^i\mf{h}^*$ and $\mf{h}^*\cap g^i\mf{h}^*$ are nonempty, as they contain the points $\xi^-$, $g^ix$ and $\xi^+$, respectively. 

Since $\mf{h},g\mf{h},\dots,g^r\mf{h}$ cannot be pairwise transverse, there exists $1\leq k\leq r$ such that $g^k\mf{h}\subsetneq\mf{h}$. However, this implies that $\mf{h}\in\mc{H}^+$, contradicting the fact that $\xi^+\in\mf{h}^*$. 
\hfill$\blacksquare$

\smallskip
Now, pick any point $x_0\in M$ and define iteratively $x_{i+1}=m(x_i,gx_i,\xi^+)$ for $i\geq 0$. Using repeatedly the identities from Lemma~\ref{lem:identities} (applied to the zero-completion $\overline M$), we obtain the following chain of equalities:
\begin{align*}
\mscr{W}(gx_i \mid x_{i+1})&=\mscr{W}(gx_i \mid m(x_i,gx_i,\xi^+))=\mscr{W}(gx_i \mid \xi^+,x_i) \\
&=\mscr{W}\left(m(gx_{i-1},g^2x_{i-1},\xi^+) \mid \xi^+,m(x_{i-1},gx_{i-1},\xi^+)\right) \\
&=\mscr{W}\left(gx_{i-1},g^2x_{i-1} \mid \xi^+,x_{i-1}\right) \\
&=\mscr{W}\left(m(gx_{i-2},g^2x_{i-2},\xi^+),m(g^2x_{i-2},g^3x_{i-2},\xi^+) \mid \xi^+,m(x_{i-2},gx_{i-2},\xi^+)\right) \\
&=\mscr{W}\left(gx_{i-2},g^2x_{i-2},g^3x_{i-2} \mid \xi^+,x_{i-2}\right) \\
&=\dots=\mscr{W}(gx_0,\dots,g^{i+1}x_0 \mid \xi^+,x_0).
\end{align*} % do we use \mid or | for the middle line of wall- and halfspace- intervals in the rest of the document?
For $i\geq r-1$, the Claim implies that $\mscr{W}(gx_i \mid x_{i+1})=\emptyset$, hence $gx_i=x_{i+1}\in I(x_i,\xi^+)$. 

It follows that the sets $\mscr{W}(g^nx_{r-1} \mid g^{n+1}x_{r-1})=\mscr{W}(x_{n+r-1} \mid x_{n+r})$ are pairwise disjoint for $n\geq 0$. Applying $g^{-k}$ to these sets, with $k\geq 1$, we deduce that the sets $\mscr{W}(g^nx_{r-1} \mid g^{n+1}x_{r-1})$ are actually pairwise disjoint for all $n\in\Z$. This proves part~(4). 

Finally, observe that, under the hypothesis of part~(5), we actually have $\mscr{W}(gx_0 \mid x_1)=\emptyset$ for every $x_0\in M$. In order to see this, suppose for the sake of contradiction that there exists a halfspace $\mf{h}\in\mscr{H}(x_1 \mid gx_0)=\mscr{H}(x_0,\xi^+ \mid gx_0)$. As in the proof of the Claim, this implies that either $g\mf{h}\subsetneq\mf{h}$ or $g\mf{h}$ and $\mf{h}$ are transverse. The former is ruled out by the fact that $\xi^+\in\mf{h}^*$, whereas the latter would contradict the hypothesis of part~(5). 

As in part~(4), the fact that $\mscr{W}(gx_0 \mid x_1)$ is empty implies that the sets $\mscr{W}(g^nx_0 \mid g^{n+1}x_0)$ are pairwise disjoint for $n\in\Z$. This concludes the proof of part~(5) and the entire lemma.
\end{proof}

Recall from Definition~\ref{minset defn} that $g\in\aut M$ is \emph{semisimple} if there exists $x\in M$ such that the sets $\mscr{W}(g^nx|g^{n+1}x)$ are pairwise disjoint for $n\in\Z$. We say that $g$ acts \emph{stably without wall inversions} if the action $\langle g\rangle\acts M$ has no wall inversions.

\begin{cor}\label{semisimple cor general}
\begin{enumerate}
\item[]
\item If $g\in\aut M$ acts stably without wall inversions, then $g$ is semisimple.
\item For every $g\in\aut M$, there exists $1\leq i\leq 2^r$ such that $g^i$ is semisimple.
\end{enumerate}
\end{cor}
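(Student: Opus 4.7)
The plan is to combine Proposition~\ref{H=H_1} and Remark~\ref{H=H_1 rmk}, which let us cut $M$ down to a convex subalgebra where all halfspaces lie in $\mc{H}_1$, with Lemma~\ref{ss lemma}, which produces the desired point as soon as $\mc{H}_1(g)=\mscr{H}(M)$.

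For part~(1), note that $\langle g\rangle$ is cyclic hence finitely generated, and by hypothesis the action $\langle g\rangle\acts M$ has no wall inversions. Proposition~\ref{H=H_1} therefore furnishes a nonempty, $\langle g\rangle$--invariant, convex subset $C\cu M$ with $\mscr{H}_C(M)\cu\mc{H}_1(g,M)$. Under the natural identification $\mscr{H}(C)\simeq\mscr{H}_C(M)$ of Remark~\ref{halfspaces of subsets}, the same argument used in part~(1) of Lemma~\ref{halfspaces of core} shows that every halfspace of $C$ actually lies in $\mc{H}_1(g,C)$: if $g\mf{h}\subsetneq\mf{h}$ in $M$, then $g(\mf{h}\cap C)\cu\mf{h}\cap C$, and the uniqueness of the halfspace of $M$ restricting to a given halfspace of $C$ forces the inclusion to be strict. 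Hence $\mc{H}_1(g,C)=\mscr{H}(C)$, and we may apply Lemma~\ref{ss lemma} part~(4) to the $\langle g\rangle$--action on $C$ to find $x\in C$ such that the sets $\mscr{W}(g^nx\mid g^{n+1}x)$ of walls of $C$ are pairwise disjoint. Since $g^nx\in C$ for every $n\in\Z$, any wall of $M$ separating two of these points lies in $\mscr{W}_C(M)$, so under the bijection $\mscr{W}_C(M)\simeq\mscr{W}(C)$ pairwise disjointness transfers back to $M$. This shows $x\in\Min(g)$.

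For part~(2), apply Remark~\ref{H=H_1 rmk} to $G=\langle g\rangle$: this extracts a subgroup of index at most $2^r$, which is necessarily of the form $\langle g^i\rangle$ with $1\leq i\leq 2^r$, together with a nonempty $\langle g^i\rangle$--invariant convex $C\cu M$ satisfying $\mscr{H}_C(M)\cu\mc{H}_1(g^i,M)$. A wall inversion $g^i\mf{h}=\mf{h}^*$ would place $\mf{h}$ in $\mc{H}_0(g^i)$, so having $\mc{H}_1(g^i,C)=\mscr{H}(C)$ automatically rules out wall inversions; thus $g^i$ acts on $C$ stably without wall inversions, and part~(1) applied to $g^i$ on $C$ yields a point $x\in C$ with pairwise disjoint sets $\mscr{W}(g^{in}x\mid g^{i(n+1)}x)$, i.e., $g^i$ is semisimple.

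The serious work has already been done inside Lemma~\ref{ss lemma}, where the boundary points $\xi^{\pm}$ are constructed via the zero-completion and the iterative median construction $x_{i+1}=m(x_i,gx_i,\xi^+)$ produces a point in $\Min(g)$ after $r-1$ steps. The present argument is really a clean reduction from the general hypothesis to that lemma, and the only subtlety to watch is the halfspace/wall bookkeeping between $M$ and the convex subalgebra $C$, which is routine because halfspaces of $C$ are in canonical bijection with their unique extensions in $\mscr{H}_C(M)$.
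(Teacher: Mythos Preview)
Your proof is correct and follows exactly the approach the paper takes: the paper's own argument is the one-line ``This follows from part~(4) of Lemma~\ref{ss lemma}, along with Proposition~\ref{H=H_1} for part~(1) and Remark~\ref{H=H_1 rmk} for part~(2),'' and you have simply unpacked these references with the appropriate bookkeeping between $\mscr{H}(C)$ and $\mscr{H}_C(M)$. One tiny redundancy: in part~(2), once you have $\mscr{H}(C)=\mc{H}_1(g^i,C)$ you can invoke Lemma~\ref{ss lemma}(4) directly rather than looping back through part~(1), but this is stylistic rather than a gap.
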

\begin{proof}
This follows from part~(4) of Lemma~\ref{ss lemma}, along with Proposition~\ref{H=H_1} for part~(1) and Remark~\ref{H=H_1 rmk} for part~(2).
\end{proof}
%\begin{rmk}
%It would have been much simpler to prove that every isometry of $X$ \emph{has a semisimple power}. Indeed, by Theorem~E in \cite{Fio2}, every isometry of $X$ has a power fixing a point of $\overline X$, after which we can immediately follow the proof of part~(4) of Lemma~\ref{ss lemma}.
%\end{rmk}

Corollary~\ref{semisimple cor general} implies Corollary~\ref{ss cor intro} from the introduction. This essentially also yields Corollary~\ref{main median semisimple}, although we will only provide a complete proof towards the end of Section~\ref{compatible metrics sect}.

\subsection{Non-transverse automorphisms.}\label{non-transverse sect}

In this subsection, we consider automorphisms of $M$ satisfying the following property. 

\begin{defn}\label{non-transverse defn}
An element $g\in\aut M$ acts \emph{non-transversely} if there does not exist $\mf{w}\in\mscr{W}(M)$ such that $\mf{w}$ and $g\mf{w}$ are transverse. An action $G\acts M$ is \emph{non-transverse} if every $g\in G$ acts non-transversely\footnote{A priori, requiring $g$ to act non-transversely is weaker than asking that the action $\langle g\rangle\acts M$ be non-transverse.}.
\end{defn}

The following is the main motivating example for Definition~\ref{non-transverse defn}.

\begin{ex}
Consider actions by median automorphisms $G\acts T_i$, where each $T_i$ is a median algebra of rank $1$. For instance, these could correspond to isometric $G$--actions on $\R$--trees. Consider the diagonal $G$--action $G\acts T_1\x\dots\x T_k$. If there exists a $G$--equivariant, injective median morphism $M\hookrightarrow T_1\x\dots\x T_k$, then the action $G\acts M$ is non-transverse by part~(1) of Remark~\ref{halfspaces of subsets}.
\end{ex}

Let us make two simple observations that will be useful later in this subsection.

\begin{rmk}\label{non-transverse rmk}
If $g$ acts non-transversely, then, for every $\mf{h}\in\overline{\mc{H}}_{1/2}(g)$, the halfspaces $\mf{h}$ and $g\mf{h}$ are facing, and so are $\mf{h}$ and $g^{-1}\mf{h}$. If $g$ acts non-transversely and stably without inversions, then $\mf{h}\in\overline{\mc{H}}_0(g)$ if and only if $g\mf{h}=\mf{h}$. 
\end{rmk}

\begin{rmk}\label{few possibilities for H_1(g)}
Let $g\in\aut M$ act non-transversely and stably without inversions. Consider a halfspace $\mf{h}\in\mc{H}_1(g)\cap\mscr{H}_{\overline{\mc{C}}(g)}(M)$. Then either $g\mf{h}\subsetneq\mf{h}$ or $g\mf{h}\supsetneq\mf{h}$. 

Indeed, since $\mf{h}\in\mc{H}_1(g)$, there exists $m\in\Z$ such that $g^m\mf{h}\subsetneq\mf{h}$. Helly's lemma guarantees the existence of a point $x\in\mf{h}\cap g^m\mf{h}^*\cap\overline{\mc{C}}(g)$. Let $C_x\cu\overline{\mc{C}}(g)$ be the convex hull of $\langle g\rangle\cdot x$. By Remark~\ref{non-transverse rmk}, $g$ preserves every halfspace in $\overline{\mc{H}}_0(g)$, which implies that $\mscr{H}_{C_x}(M)\cap\overline{\mc{H}}_0(g)=\mscr{H}_{\langle g\rangle\cdot x}(M)\cap\overline{\mc{H}}_0(g)=\emptyset$. Since $C_x\cu\overline{C}(g)$, it follows that $\mscr{H}_{C_x}(M)\cu\mc{H}_1(g)$, hence $\mscr{H}(C_x)=\mc{H}_1(g,C_x)$. 

Now, Lemma~\ref{ss lemma} shows that, for every $\mf{k}\in\mscr{H}(C_x)$, the intersections $g\mf{k}\cap\mf{k}$ and $g\mf{k}^*\cap\mf{k}^*$ are both nonempty. Since $x\in\mf{h}\cap g^m\mf{h}^*$, we have $\mf{h}\cap C_x\in\mscr{H}(C_x)$, hence $g\mf{h}\cap\mf{h}$ and $g\mf{h}^*\cap\mf{h}^*$ are nonempty. It follows that either $g\mf{h}\subsetneq\mf{h}$ or $g\mf{h}\supsetneq\mf{h}$ or $g\mf{h}$ and $\mf{h}$ are transverse. The latter is ruled out by the fact that $g$ acts non-transversely. 
\end{rmk}

Recall from Definition~\ref{minset defn} that the \emph{minimal set} $\Min(g)$ is the set of points $x\in M$ such that the sets $\mscr{W}(g^nx|g^{n+1}x)$ are pairwise disjoint for $n\in\Z$. When it is necessary to specify the median algebra under consideration, we will also write $\Min(g,M)$.

For non-transverse automorphisms, $\Min(g)$ turns out to be a \emph{convex} subset of $M$:

\begin{prop}\label{non-transverse prop}
Let $g\in\aut M$ act non-transversely and stably without inversions. Then $\Min(g)=\overline{\mc{C}}(g)$. Moreover, for every $x\in M$, we have $I(x,gx)\cap\overline{\mc{C}}(g)\neq\emptyset$.
\end{prop}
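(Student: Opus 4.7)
The plan is to prove the equality $\Min(g)=\overline{\mc{C}}(g)$ by two separate inclusions, and then deduce the ``moreover'' clause via a short median computation.

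\smallskip
\noindent\textbf{First inclusion $\Min(g)\subseteq\overline{\mc{C}}(g)$.} I would argue the contrapositive. Suppose $x\in\mf{h}^*$ for some $\mf{h}\in\overline{\mc{H}}_{1/2}(g)$. Since $g$ acts non-transversely, Remark~\ref{non-transverse rmk} shows that $\mf{h}$ is facing both $g\mf{h}$ and $g^{-1}\mf{h}$, so $\mf{h}^*\cap g^{\pm1}\mf{h}^*=\emptyset$. Combined with $x\in\mf{h}^*$, these force $x\in g\mf{h}$ and $gx\in\mf{h}$; the latter immediately gives $gx\notin g\mf{h}$. Hence the wall $g\mf{w}$ lies in both $\mscr{W}(x\mid gx)$ and $\mscr{W}(gx\mid g^2x)$, contradicting $x\in\Min(g)$.

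\smallskip
\noindent\textbf{Second inclusion $\overline{\mc{C}}(g)\subseteq\Min(g)$.} Fix $x\in\overline{\mc{C}}(g)$ and any $\mf{h}\in\mscr{H}(x\mid gx)$. Since $x,gx\in\overline{\mc{C}}(g)$, Lemma~\ref{halfspaces of core}(1) gives $\mf{h}\in\overline{\mc{H}}_0(g)\sqcup\mc{H}_1(g)$. The option $\overline{\mc{H}}_0(g)$ is ruled out by Remark~\ref{non-transverse rmk}, since such halfspaces are $g$-fixed and thus cannot separate $x$ from $gx$. Hence $\mf{h}\in\mc{H}_1(g)\cap\mscr{H}_{\overline{\mc{C}}(g)}(M)$, and Remark~\ref{few possibilities for H_1(g)} yields $g\mf{h}\subsetneq\mf{h}$ or $g\mf{h}\supsetneq\mf{h}$. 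The case $g\mf{h}\subsetneq\mf{h}$ implies $g^{-1}\mf{h}^*\subsetneq\mf{h}^*$, so $x\in g^{-1}\mf{h}^*\subseteq\mf{h}^*$ (from $gx\in\mf{h}^*$), contradicting $x\in\mf{h}$. So $g\mf{h}\supsetneq\mf{h}$, producing the strict chain $\cdots\subsetneq g^{-1}\mf{h}\subsetneq\mf{h}\subsetneq g\mf{h}\subsetneq\cdots$. A direct induction then gives $g^kx\in\mf{h}$ for $k\leq 0$ and $g^kx\in\mf{h}^*$ for $k\geq 1$; consequently, for every $n\neq 0$, both $x$ and $gx$ lie on the same side of $g^n\mf{w}$, so $\mscr{W}(x\mid gx)\cap g^n\mscr{W}(x\mid gx)=\emptyset$. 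Translating by $g^m$ gives the required pairwise disjointness of the sets $\mscr{W}(g^mx\mid g^{m+1}x)$ for $m\in\Z$, so $x\in\Min(g)$.

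\smallskip
\noindent\textbf{Moreover statement.} Pick $y\in\overline{\mc{C}}(g)$, which is nonempty by Theorem~\ref{abstract core}, and set $z:=m(x,gx,y)$. Clearly $z\in I(x,gx)$. To show $z\in\overline{\mc{C}}(g)$, fix $\mf{h}\in\overline{\mc{H}}_{1/2}(g)$: we need at least two of $x,gx,y$ to lie in $\mf{h}$. Since $y\in\overline{\mc{C}}(g)\subseteq\mf{h}$, it suffices to exclude $\{x,gx\}\subseteq\mf{h}^*$. But $\mf{h}$ and $g\mf{h}$ are facing by Remark~\ref{non-transverse rmk}, so $\mf{h}^*\cap g\mf{h}^*=\emptyset$; thus $x\in\mf{h}^*$ forces $gx\in g\mf{h}^*\subseteq\mf{h}$, as required.

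The main technical obstacle is the second inclusion: pinning down the direction $g\mf{h}\supsetneq\mf{h}$ for halfspaces separating $x$ from $gx$, and then tracking the $g$-iterates of $x$ relative to $\mf{h}$ carefully enough to exclude repetitions among the wall sets $\mscr{W}(g^mx\mid g^{m+1}x)$.
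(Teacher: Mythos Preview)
Your proof is correct. The first inclusion $\Min(g)\subseteq\overline{\mc{C}}(g)$ matches the paper's argument (you exhibit the wall $g\mf{w}$ in two consecutive sets, the paper exhibits $\mf{w}$, but it is the same computation). The other two parts take genuinely different routes from the paper, both of them more elementary.

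For $\overline{\mc{C}}(g)\subseteq\Min(g)$, the paper passes to the convex hull $C_x$ of $\langle g\rangle\cdot x$ inside $\overline{\mc{C}}(g)$, observes that $\mscr{H}(C_x)=\mc{H}_1(g,C_x)$, and then invokes part~(5) of Lemma~\ref{ss lemma}. You instead stay in $M$: given $\mf{h}\in\mscr{H}(x\mid gx)$, you use Lemma~\ref{halfspaces of core}(1) and Remark~\ref{non-transverse rmk} to force $\mf{h}\in\mc{H}_1(g)\cap\mscr{H}_{\overline{\mc{C}}(g)}(M)$, and then Remark~\ref{few possibilities for H_1(g)} to get $g\mf{h}\supsetneq\mf{h}$ directly, after which the disjointness of the sets $\mscr{W}(g^nx\mid g^{n+1}x)$ is a short chain computation. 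This is a nice shortcut; note however that Remark~\ref{few possibilities for H_1(g)} itself already builds $C_x$ and appeals to Lemma~\ref{ss lemma}, so the underlying machinery is the same --- you have just repackaged it. One small expository slip: ``the latter immediately gives $gx\notin g\mf{h}$'' should read ``the former'', or simply ``$x\in\mf{h}^*$ gives $gx\in g\mf{h}^*$''.

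For the ``moreover'' clause your argument is strictly simpler than the paper's. The paper verifies the hypotheses of Lemma~\ref{nonempty intersection criterion} for the set $\overline{\mc{H}}_{1/2}(g)\cup(\s_x\cap\s_{gx})$, which requires Lemma~\ref{previously part of the proof}. You just pick $y\in\overline{\mc{C}}(g)$ (nonempty by Theorem~\ref{abstract core}) and check that $m(x,gx,y)$ works, using only that no $\mf{h}\in\overline{\mc{H}}_{1/2}(g)$ can have $\{x,gx\}\subseteq\mf{h}^*$ --- which is exactly the content of Remark~\ref{non-transverse rmk}. This is the same observation the paper makes (``$\mscr{W}(x,gx\mid\overline{\mc{C}}(g))\cap\overline{\mc{H}}_{1/2}(g)=\emptyset$''), but you exploit it more efficiently.
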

\begin{proof}
If $x\not\in\overline{\mc{C}}(g)$, then there exists $\mf{h}\in\overline{\mc{H}}_{1/2}(g)$ with $x\in\mf{h}^*$. By Remark~\ref{non-transverse rmk}, we have $gx\in g\mf{h}^*\cu\mf{h}$ and $g^{-1}x\in g^{-1}\mf{h}^*\cu\mf{h}$. In particular, $\mscr{W}(g^{-1}x|x)\cap\mscr{W}(x|gx)\neq\emptyset$, hence $x\not\in\Min(g)$. This shows the inclusion $\Min(g)\cu\overline{\mc{C}}(g)$. 

Let us prove that $I(x,gx)\cap\overline{\mc{C}}(g)\neq\emptyset$ for all $x\in M$. Given $x\in M$, consider the set of halfspaces $\mc{K}=\overline{\mc{H}}_{1/2}(g)\cup(\s_x\cap\s_{gx})$. The argument in the above paragraph shows $\mscr{H}(x,gx|\overline{\mc{C}}(g))\cap\overline{\mc{H}}_{1/2}(g)=\emptyset$. This implies that the halfspaces in $\mc{K}$ intersect pairwise. Any chain in $\mc{K}$ contains a cofinal subset contained in either $\overline{\mc{H}}_{1/2}(g)$ or $\s_x\cap\s_{gx}$. By Lemma~\ref{previously part of the proof}, any chain in $\overline{\mc{H}}_{1/2}(g)$ admits a lower bound in $\overline{\mc{H}}_{1/2}(g)$. It is clear that the set $\s_x\cap\s_{gx}$ has the same property. Thus, $\mc{K}$ satisfies the hypotheses of Lemma~\ref{nonempty intersection criterion}, which shows that $I(x,gx)\cap\overline{\mc{C}}(g)\neq\emptyset$.

We are only left to prove that $\overline{\mc{C}}(g)\cu\Min(g)$. Recall that, by Lemma~\ref{halfspaces of core}, we have a partition:
\[\mscr{H}(\overline{\mc{C}}(g))=\overline{\mc{H}}_0(g,\overline{\mc{C}}(g))\sqcup\mc{H}_1(g,\overline{\mc{C}}(g)).\]
By Remarks~\ref{halfspaces of subsets} and~\ref{gate-convex inversions}, $g$ also acts on $\overline{\mc{C}}(g)$ non-transversely and stably without inversions. By Remark~\ref{non-transverse rmk}, every element of $\overline{\mc{H}}_0(g,\overline{\mc{C}}(g))$ is preserved by $g$. If $C$ is the convex hull of any $\langle g\rangle$--orbit in $\overline{\mc{C}}(g)$, it follows that $\mscr{H}_C(M)\cu\mc{H}_1(g,M)$, hence $\mscr{H}(C)=\mc{H}_1(g,C)$. 

Thus, part~(5) of Lemma~\ref{ss lemma} shows that $\Min(g,C)=C$. We deduce that $\Min(g,\overline{\mc{C}}(g))=\overline{\mc{C}}(g)$ and $\overline{\mc{C}}(g)\cu\Min(g,M)$, completing the proof.
\end{proof}

\begin{prop}\label{gate-convex C bar}
If $g\in\aut M$ acts non-transversely and stably without inversions, then $\overline{\mc{C}}(g)$ is gate-convex. 
\end{prop}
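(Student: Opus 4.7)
The plan is to argue by contradiction using the gate-convexity criterion of Lemma~\ref{gate-convexity criterion}. Suppose $\overline{\mc{C}}(g)$ is not gate-convex. Then there is a chain $\mscr{C}\cu\mscr{H}_{\overline{\mc{C}}(g)}(M)$ such that the set $\mf{k}:=\bigcap\mscr{C}$ is nonempty and disjoint from $\overline{\mc{C}}(g)$. By Remark~\ref{bigcap chain halfspace} we know $\mf{k}$ is a halfspace, and by construction $\mf{k}\in\mscr{H}(M)\setminus\mscr{H}_{\overline{\mc{C}}(g)}(M)$. The goal is to show that nevertheless $\mf{k}\in\overline{\mc{H}}_0(g)$, which then forces $\mf{k}\in\mscr{H}_{\overline{\mc{C}}(g)}(M)$ by part~(2) of Lemma~\ref{halfspaces of core} (applicable since $\langle g\rangle$ is finitely generated and stably without wall inversions), yielding the desired contradiction.

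The first step is to apply Lemma~\ref{H_1 is not intersection of halfspaces of core} to $\overline{\mc{C}}(g)$, which is $\langle g\rangle$--invariant: this gives $\mf{k}\not\in\mc{H}_1(g)$. Next, by part~(1) of Lemma~\ref{halfspaces of core}, every halfspace of the chain lies in $\overline{\mc{H}}_0(g)\sqcup\mc{H}_1(g)$, so one of the two subchains $\mscr{C}\cap\overline{\mc{H}}_0(g)$ and $\mscr{C}\cap\mc{H}_1(g)$ is cofinal in $\mscr{C}$; after replacing $\mscr{C}$ by this cofinal subchain we may assume $\mscr{C}$ lies entirely in one of the two pieces (the intersection $\mf{k}$ is unchanged).

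In the first case, where $\mscr{C}\cu\overline{\mc{H}}_0(g)$, Remark~\ref{non-transverse rmk} (using that $g$ acts non-transversely and stably without inversions) says every element of $\mscr{C}$ is actually fixed by $g$. Hence $g\mf{k}=\mf{k}$ and so $\mf{k}\in\overline{\mc{H}}_0(g)$. In the second case, where $\mscr{C}\cu\mc{H}_1(g)\cap\mscr{H}_{\overline{\mc{C}}(g)}(M)$, Remark~\ref{few possibilities for H_1(g)} tells us that for every $\mf{h}\in\mscr{C}$ either $g\mf{h}\subsetneq\mf{h}$ or $g\mf{h}\supsetneq\mf{h}$. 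Splitting $\mscr{C}$ into these two subsets and passing to a cofinal subchain, we may assume (replacing $g$ by $g^{-1}$ if necessary) that $g\mf{h}\subsetneq\mf{h}$ for every $\mf{h}\in\mscr{C}$. Then $g\mf{k}=\bigcap_{\mf{h}\in\mscr{C}}g\mf{h}\subseteq\bigcap_{\mf{h}\in\mscr{C}}\mf{h}=\mf{k}$. Strict inclusion would place $\mf{k}$ in $\mc{H}_1(g)$, contradicting the first step; therefore $g\mf{k}=\mf{k}$ and again $\mf{k}\in\overline{\mc{H}}_0(g)$. In either case we obtain $\mf{k}\in\overline{\mc{H}}_0(g)\cu\mscr{H}_{\overline{\mc{C}}(g)}(M)$, the required contradiction.

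The only delicate point is the standard observation that in a chain partitioned into two pieces, at least one piece is cofinal: if a subset $\mscr{A}\cu\mscr{C}$ fails to be cofinal, there is some $\mf{h}_0\in\mscr{C}$ below which no element of $\mscr{A}$ lies, so all of $\{\mf{h}\in\mscr{C}\mid\mf{h}\cu\mf{h}_0\}$ sits in the complement, making the complement cofinal. Beyond this, the argument is essentially a bookkeeping exercise, and the main conceptual input is the combination of Lemmas~\ref{H_1 is not intersection of halfspaces of core} and~\ref{halfspaces of core} together with the non-transversality remarks, all of which are already available.
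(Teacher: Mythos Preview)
Your proof is correct and follows essentially the same approach as the paper's: both invoke Lemma~\ref{gate-convexity criterion}, split into the cases $\mscr{C}\cu\overline{\mc{H}}_0(g)$ and $\mscr{C}\cu\mc{H}_1(g)$ via Lemma~\ref{halfspaces of core}(1), handle them with Remarks~\ref{non-transverse rmk} and~\ref{few possibilities for H_1(g)} respectively, and use Lemma~\ref{H_1 is not intersection of halfspaces of core} together with Lemma~\ref{halfspaces of core}(2) to finish. The only cosmetic difference is that you frame the argument as a contradiction and invoke Lemma~\ref{H_1 is not intersection of halfspaces of core} at the outset, whereas the paper proceeds directly and applies that lemma only in the subcase $g\mf{k}\subsetneq\mf{k}$.
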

\begin{proof}
Consider a chain $\mscr{C}\cu\mscr{H}_{\overline{\mc{C}}(g)}(M)$ such that $\bigcap\mscr{C}\neq\emptyset$. Then $\mf{k}:=\bigcap\mscr{C}\in\mscr{H}(M)$. We only need to show that $\mf{k}$ must intersect $\overline{\mc{C}}(g)$, since then we can invoke Lemma~\ref{gate-convexity criterion}.

By part~(1) of Lemma~\ref{halfspaces of core}, we can replace $\mscr{C}$ with a cofinal subset and assume that either $\mscr{C}\cu\mc{H}_1(g,M)$ or $\mscr{C}\cu\overline{\mc{H}}_0(g,M)$. In the latter case, $g$ preserves every halfspace in $\mscr{C}$, by Remark~\ref{non-transverse rmk}. Thus, $g\mf{k}=\mf{k}$, which implies that $\mf{k}\in\overline{\mc{H}}_0(g)$. By part~(2) of Lemma~\ref{halfspaces of core}, $\mf{k}$ intersects $\overline{\mc{C}}(g)$.

Let us suppose instead that $\mscr{C}\cu\mc{H}_1(g,M)$. By Remark~\ref{few possibilities for H_1(g)}, we can pass to a cofinal subset of $\mscr{C}$ and assume that $g\mf{h}\subsetneq\mf{h}$ for all $\mf{h}\in\mscr{C}$ (possibly also replacing $g$ with $g^{-1}$). Thus, $g\mf{k}\cu\mf{k}$. If $g\mf{k}=\mf{k}$, we conclude that $\mf{k}$ intersects $\overline{\mc{C}}(g)$ as above. Otherwise $g\mf{k}\subsetneq\mf{k}$ and Lemma~\ref{H_1 is not intersection of halfspaces of core} implies that $\mf{k}\in\mscr{H}_{\overline{\mc{C}}(g)}(M)$.
\end{proof}

The next three examples show, respectively, that Proposition~\ref{gate-convex C bar} does not extend to actions of general finitely generated groups, does not extend to the core $\mc{C}(g)$, and can fail if $g$ does not act non-transversely. 

\begin{ex}\label{non-gate-convex core in trees}
There are isometric actions of finitely generated free groups $F_n$ on complete $\R$--trees $T$ for which the reduced core $\overline{\mc{C}}(F_n,T)$ is not closed (hence not gate-convex).

This is because, for every isometric $G$--action on an $\R$--tree, the reduced core coincides with the minimal $G$--invariant sub-tree. As in \cite[Example~II.6]{Gaboriau-Levitt}, the latter needs not be complete, hence it can be non-closed. We recall the argument for the reader's convenience.

Let $G\acts T$ be an action on a complete $\R$--tree with dense orbits (and no global fixed point). For instance, several examples with $G=F_3$ were constructed in \cite[Theorem~5]{Levitt-Duke}. By \cite[Proposition~3.1]{Culler-Morgan}, the minimal invariant sub-tree $T_0\cu T$ is a union of axes of elements of $G$. Since all orbits are dense, branch points accumulate on every point of $T_0$. Thus, every geodesic is nowhere-dense in $T_0$. It follows that $T_0$ is a countable union of nowhere-dense subsets. This would violate Baire's theorem if $T_0$ were complete.

Note that isometric $G$--actions on $\R$--trees never have wall inversions, see e.g.\ Remark~\ref{connected inversions} below.
\end{ex}

\begin{ex}
%The core is not gate-convex in general. In the following example, we have a complete invariant metric and G is infinite cyclic, acting non-transversely.
Let $T$ be the rooted simplicial tree with root $v$ of degree $2$ and every other vertex of degree $3$. Orient every edge of $T$ away from $v$. Every vertex $w\in T$ is contained in two oriented edges moving away from $w$; we label them by $0$ and $1$, respectively. This gives a coding of the vertices of $T$ in terms of finite sequences of $0$s and $1$s, with $v$ corresponding to the empty sequence. 

Let $S_n\cu T$ be the set of $2^n$ vertices represented by sequences of length $n\geq 0$. If an edge $e\cu T$ intersects $S_{n-1}$ and $S_n$, we assign $e$ a length of $2^{-n}$. The metric completion $\wh T$ of the resulting metric on $T$ is a complete $\R$--tree. Note that $\wh T$ coincides with the closed ball of radius $1$ around $v$, and $T\cu\wh T$ is the corresponding open ball.

Points of $\wh T\setminus T$ are represented by infinite sequences of $0$s and $1$s. Thus, it is natural to identify the set $\wh T\setminus T$ with the dyadic integers $\Z_2$, i.e.\ with the set of expansions of the form $\sum_{i\geq 0}a_i2^i$ with $a_i\in\{0,1\}$. We then have an identification between $S_n$ and $\Z_2/2^n\Z_2$, i.e.\ with the set of expansions of the form $\sum_{0\leq i<n}a_i2^i$ with $a_i\in\{0,1\}$.

Let $g$ be the isometry of $\wh T$ that, under the identification between $\wh T\setminus T$ and $\Z_2$, is represented by the addition of $+1$ in the ring $\Z_2$. Then $g$ fixes the root $v$ and acts transitively on every metric sphere around $v$ of radius $<1$. We have $\mc{C}(g,\wh T)=T$, which is not a gate-convex subset of $\wh T$. 

By contrast, note that $\overline{\mc{C}}(g,\wh T)=\{v\}$. 
\end{ex}

\begin{ex}
%The reduced core is not gate-convex in general. In the following example, we have an invariant metric and G is infinite cyclic
Consider the median space $X=[0,1]^2\setminus\{(0,0)\}$ (endowed with the restriction of the $\ell^1$ metric on $\R^2$). Let $g\in\isom X$ be the reflection in the diagonal through $(0,0)$ and $(1,1)$. Then $\overline{\mc{C}}(g)=[0,1]^2\setminus\left(\{0\}\x[0,1]\cup[0,1]\x\{0\}\right)$, which is not gate-convex in $X$.
% Note that the problem is that we have a chain in $\overline{\mc{H}}_0$ whose intersection lies in $\overline{\mc{H}}_{1/2}^*$.
% I don't think this problem arises if there is an invariant *complete* metric though
\end{ex}

We conclude this subsection with the following result. Although this will not be used any further in this paper, we think it is likely to prove useful in the future (especially since its proof seems to be surprisingly nontrivial).

\begin{prop}\label{cores of subalgebras}
Let $N\cu M$ be a $G$--invariant median subalgebra.
\begin{enumerate}
\item We have $\mc{C}(G,N)\cu N\cap\mc{C}(G,M)$. 
\item If $G\acts N$ has no wall inversions, we have $\overline{\mc{C}}(G,N)\cu N\cap\overline{\mc{C}}(G,M)$.
\item If $G\acts M$ is non-transverse, then $\mc{C}(G,N)=N\cap\mc{C}(G,M)$ and $\overline{\mc{C}}(G,N)\supseteq N\cap\overline{\mc{C}}(G,M)$.
\end{enumerate}
\end{prop}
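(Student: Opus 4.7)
The strategy for all three parts is to track how halfspaces of $M$ restrict to halfspaces of $N$ via $\mf{h} \mapsto \mf{h} \cap N$, and to relate the dynamical classes $\mc{H}_\bullet(G, M)$ and $\mc{H}_\bullet(G, N)$ under this map. The key technical input is Remark~\ref{dynamics in subalgebras}, which forbids a halfspace $\mf{k} \in \mc{H}_1(G, N)$ from lifting to a halfspace $\mf{h} \in \mscr{H}_N(M)$ outside $\mc{H}_1(G, M)$.

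For part~(1), I would argue by contrapositive: given $y \in N$ with $y \notin \mc{C}(G, M)$, pick $\mf{h} \in \mc{H}_{1/2}(G, M)$ with $y \in \mf{h}^*$, and produce $\mf{k} \in \mc{H}_{1/2}(G, N)$ with $y \in \mf{k}^*$. The natural candidate is $\mf{k} := \mf{h} \cap N$. I would first verify that $\mf{k}$ and $\mf{k}^*$ are nonempty: the $G$-invariance of $N$ together with a facing witness $g_0 \in G$ (for which $g_0 \mf{h}^* \cap \mf{h}^* = \emptyset$) rules out $N \cu \bigcap_{g} g\mf{h}^*$. Then Remark~\ref{dynamics in subalgebras} applied to both $\mf{h}$ and $\mf{h}^*$ gives $\mf{k}, \mf{k}^* \notin \mc{H}_1(G, N)$. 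Finally, the same $g_0$ descends to a facing relation in $N$: restricting gives $g_0\mf{k}^* \cap \mf{k}^* = (g_0\mf{h}^* \cap \mf{h}^*) \cap N = \emptyset$, and the wall inversion alternative $g_0\mf{k} = \mf{k}^*$ is incompatible with the dynamics already derived. Part~(2) is entirely analogous with $\overline{\mc{H}}_{1/2}$ in place of $\mc{H}_{1/2}$; the no-wall-inversions hypothesis on $G \acts N$ is exactly what excludes the analogous inversion alternative.

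For part~(3), the $\cu$ inclusion for $\mc{C}$ is part~(1). For the reverse inclusion $N \cap \mc{C}(G, M) \cu \mc{C}(G, N)$, as well as for $N \cap \overline{\mc{C}}(G, M) \cu \overline{\mc{C}}(G, N)$, I would take $y$ in the left-hand side and $\mf{k} \in \mc{H}_{1/2}(G, N)$ (resp.\ $\overline{\mc{H}}_{1/2}(G, N)$), pick any preimage $\mf{h} \in \mscr{H}_N(M)$ of $\mf{k}$, and prove $\mf{h} \in \mc{H}_{1/2}(G, M)$ (resp.\ $\overline{\mc{H}}_{1/2}(G, M)$). First, $\mf{h} \notin \mc{H}_0(G, M)$ — otherwise a finite-index stabiliser of $\mf{h}$ would also stabilise $\mf{k}$, contradicting $\mf{k} \notin \mc{H}_0(G, N)$ — and $\mf{h} \notin \mc{H}_1(G, M)$ by Remark~\ref{dynamics in subalgebras}. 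Non-transverseness of $G \acts M$ descends to $G \acts N$ (any transverse pair in $N$ would lift to a transverse pair in $M$), so for the facing witness $g$ of $\mf{k}$ in $N$ the only possible relations between $g\mf{h}$ and $\mf{h}$ in $M$ are equal, facing, co-facing, opposite, or strict nesting. Equal contradicts facing in $N$; strict nesting contradicts $\mf{h} \notin \mc{H}_1$; opposite and co-facing each restrict to $g\mf{k} = \mf{k}^*$ in $N$, contradicting $\mf{k} \in \mc{H}_{1/2}(G, N)$. Thus $g\mf{h}, \mf{h}$ are facing in $M$, $\mf{h} \in \mc{H}_{1/2}(G, M)$, and $y \in \mf{h} \cap N = \mf{k}$.

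The main obstacle throughout is the side-consistency in parts~(1) and~(2): ensuring that the restriction of a halfspace in $\mc{H}_{1/2}(G, M)$ (resp.\ $\overline{\mc{H}}_{1/2}(G, M)$) lands on the correct side of the $\mc{H}_{1/2}(G, N)$ / $\mc{H}_{1/2}(G, N)^*$ (resp.\ $\overline{\mc{H}}_{1/2}(G, N)$ / $\overline{\mc{H}}_{1/2}(G, N)^*$) partition. The delicate pathology is a wall inversion in $N$ induced by the restriction, even when no corresponding inversion exists in $M$; this is ruled out in part~(2) by the explicit hypothesis on $N$ and in part~(3) by the non-transverseness of $G \acts M$, while part~(1) requires the finer dynamical control provided by combining Remark~\ref{dynamics in subalgebras} with the facing relations descending from the ambient action.
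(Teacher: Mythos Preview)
Your outline for parts~(1) and~(2) follows the paper's Claim~1 closely and is essentially correct, though in part~(1) you should be explicit that ruling out $\mf{k}\in\mc{H}_0(G,N)$ requires more than excluding the single inversion $g_0\mf{k}=\mf{k}^*$: one must pass to the finite-index stabiliser of $\mf{k}$ and use Remark~\ref{G and h rmk} to find a facing witness there, as the paper does.

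The real problem is in part~(3). Your key step ``$\mf{h}\notin\mc{H}_1(G,M)$ by Remark~\ref{dynamics in subalgebras}'' misreads the remark: it gives the implication $\mf{k}\in\mc{H}_1(G,N)\Rightarrow\mf{h}\in\mc{H}_1(G,M)$, whose contrapositive is useless here. In fact the claim is false for an \emph{arbitrary} preimage $\mf{h}$: nothing prevents some $g\in G$ from satisfying $g\mf{h}\subsetneq\mf{h}$ while $g\mf{h}\cap N=\mf{h}\cap N$, so that $g\mf{k}=\mf{k}$ and $\mf{k}\notin\mc{H}_1(G,N)$ is perfectly compatible with $\mf{h}\in\mc{H}_1(G,M)$. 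Once $\mf{h}\in\mc{H}_1(G,M)$ is allowed, your case analysis for the facing witness collapses, since ``strict nesting'' can no longer be excluded.

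This is exactly why the paper's argument is substantially longer. Claims~2 and~3 show that, under non-transverseness, the orbit $G\cdot\mf{h}$ decomposes into chains, each of which has constant intersection with $N$. The halfspace that actually lies in $\overline{\mc{H}}_{1/2}(G,M)$ (and hence in $\mc{H}_{1/2}(G,M)$ when $\mf{k}\in\mc{H}_{1/2}(G,N)$) is not $\mf{h}$ itself but the \emph{union} $\mf{g}$ of the chain containing $\mf{h}$; this union still restricts to $\mf{k}$ on $N$, and $G$ permutes the finitely many such unions, which are pairwise facing. Your approach would need to replace the arbitrary lift by this maximal one.
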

\begin{proof} 
We can assume that $N$ is nonempty, since the proposition is trivial otherwise. 

We begin by proving parts~(1) and~(2). They are immediate from Claim~1:

\smallskip
{\bf Claim~1:} \emph{If $\mf{k}\in\mc{H}_{1/2}(G,M)$ (resp.\ if $G\acts N$ has no wall inversions and $\mf{k}\in\overline{\mc{H}}_{1/2}(G,M)$), then either $N\cu\mf{k}$ or $\mf{k}\cap N\in\mc{H}_{1/2}(G,N)$ (resp.\ $\mf{k}\cap N\in\overline{\mc{H}}_{1/2}(G,N)$).}

\smallskip \noindent
\emph{Proof of Claim~1.} Recall that $\mc{H}_{1/2}(G,M)\cu\overline{\mc{H}}_{1/2}(G,M)$. If $\mf{k}\in\overline{\mc{H}}_{1/2}(G,M)$, then there exists $g\in G$ such that $g\mf{k}^*\cap\mf{k}^*=\emptyset$, so $\mf{k}^*$ cannot contain the nonempty $G$--invariant set $N$. Thus, either $N\cu\mf{k}$, or $\mf{h}:=\mf{k}\cap N$ is a halfspace of $N$. By Remark~\ref{dynamics in subalgebras}, $\mf{h}$ does not lie in $\mc{H}_1(G,N)$. If $G\acts N$ has no wall inversions, the fact that $g\mf{h}^*\cap\mf{h}^*=\emptyset$ implies that $\mf{h}\not\in\overline{\mc{H}}_0(G,N)$. Hence $\mf{h}\in\overline{\mc{H}}_{1/2}(G,N)$.

If $\mf{k}\in\mc{H}_{1/2}(G,M)$, the halfspace $\mf{h}$ cannot lie in $\mc{H}_0(G,N)$. Otherwise, a finite-index subgroup $H\leq G$ would preserve $\mf{h}$. By Remark~\ref{G and h rmk}, we would have $\mc{H}_{1/2}(G,M)=\mc{H}_{1/2}(H,M)$, hence there would exist $h\in H$ with $h\mf{k}^*\cap\mf{k}^*=\emptyset$, contradicting that $h\mf{h}=\mf{h}$. Thus, $\mf{h}\in\mc{H}_{1/2}(G,N)$. 
\hfill$\blacksquare$

\smallskip
In order to conclude the proof of the proposition, we need to obtain the reverse inclusions $\mc{C}(G,N)\supseteq N\cap\mc{C}(G,M)$ and $\overline{\mc{C}}(G,N)\supseteq N\cap\overline{\mc{C}}(G,M)$ when $G\acts M$ is non-transverse. These will follow once we show that, for every halfspace $\mf{h}\in\mc{H}_{1/2}(G,N)$ (resp.\ $\mf{h}\in\overline{\mc{H}}_{1/2}(G,N)$), there exists $\mf{k}\in\mc{H}_{1/2}(G,M)$ (resp.\ $\mf{k}\in\overline{\mc{H}}_{1/2}(G,M)$) with $\mf{h}=\mf{k}\cap N$. First, we need two more claims.

\smallskip
{\bf Claim~2:} \emph{Consider $\mf{k}\in\mscr{H}_N(M)$ with $\mf{k}\cap N\in\overline{\mc{H}}_{1/2}(G,N)$. If $\mf{k}_1,\mf{k}_2,\mf{k}_3\in G\cdot\mf{k}\cup G\cdot\mf{k}^*$ are pairwise disjoint, then $\mf{k}_1,\mf{k}_2,\mf{k}_3\in G\cdot\mf{k}^*$.}

\smallskip \noindent
\emph{Proof of Claim~2.} 
If the claim fails, there exist $\mf{k}_1\in G\cdot\mf{k}^*$ and disjoint halfspaces $\mf{k}_2,\mf{k}_3\in G\cdot\mf{k}\cup G\cdot\mf{k}^*$ contained in $\mf{k}_1$. Since $\mf{k}\cap N\in\overline{\mc{H}}_{1/2}(G,N)$, no two elements of $G\cdot\mf{k}$ are disjoint (by Remark~\ref{1/2 bar rmk}). Thus, $\mf{k}_2$ and $\mf{k}_3$ must both lie in $G\cdot\mf{k}^*$. Since $\mf{k}\in\mscr{H}_N(M)$, there exist points $x,y,z\in N$ lying, respectively, in $\mf{k}_1^*$, $\mf{k}_2$, $\mf{k}_3$. The median $m(x,y,z)\in N$ lies in $\mf{k}_1\cap\mf{k}_2^*$. Hence $\mf{k}_2\cap N\subsetneq\mf{k}_1\cap N$, which contradicts the fact that $\mf{k}\cap N\not\in\mc{H}_1(G,N)$. 
\hfill$\blacksquare$

\smallskip
{\bf Claim~3:} \emph{Consider $\mf{k}\in\mscr{H}_N(M)$ with $\mf{k}\cap N\in\overline{\mc{H}}_{1/2}(G,N)$. Then the orbit $G\cdot\mf{k}$ can be partitioned into chains $\mscr{C}_i$ such that any two halfspaces in different chains are facing.}

\smallskip \noindent
\emph{Proof of Claim~3.} By Zorn's lemma, there exists a maximal subset $\mc{F}\cu G\cdot\mf{k}$ of pairwise-facing halfspaces. Since $\mf{k}\cap N\in\overline{\mc{H}}_{1/2}(G,N)$, there exists $g\in G$ such that $g\mf{k}\cap N$ and $\mf{k}\cap N$ are facing. Since $G\acts M$ is non-transverse, no two halfspaces in $G\cdot\mf{k}$ are transverse, and we conclude that $g\mf{k}$ and $\mf{k}$ are facing. This shows that $\#\mc{F}\geq 2$.

Since $\mf{k}\cap N\in\overline{\mc{H}}_{1/2}(G,N)$, Remark~\ref{1/2 bar rmk} shows that $(\mf{k}\cap N)^*$ does not lie in the $G$--orbit of $\mf{k}\cap N$. In particular, $\mf{k}^*\not\in G\cdot\mf{k}$. By maximality of $\mc{F}$, for every $\mf{j}\in G\cdot\mf{k}\setminus\mc{F}$, there exists $\mf{f}\in\mc{F}$ such that $\mf{j}$ and $\mf{f}$ are not facing. Since $\mf{f}\neq\mf{j}^*$, this is equivalent to saying that $\mf{j}$ does not contain $\mf{f}^*$. Let us show that the halfspace $\mf{f}$ is uniquely determined by $\mf{j}$. 

Suppose for the sake of contradiction that there exist $\mf{f}_1,\mf{f}_2\in\mc{F}$ and a halfspace $\mf{j}\in G\cdot\mf{k}\setminus\mc{F}$ that contains neither $\mf{f}_1^*$ nor $\mf{f}_2^*$. Since $\mf{f}_1$ and $\mf{f}_2$ are facing, $\mf{f}_1^*$ and $\mf{f}_2^*$ are disjoint.  By Claim~2, $\mf{j}$ cannot be disjoint from both $\mf{f}_1^*$ and $\mf{f}_2^*$. Since $G\acts M$ is non-transverse, $\mf{j}$ is transverse to neither $\mf{f}_1^*$ nor $\mf{f}_2^*$, hence $\mf{j}$ must be contained in either $\mf{f}_1^*$ or $\mf{f}_2^*$. However, since $(\mf{k}\cap N)^*$ does not lie in the $G$--orbit of $\mf{k}\cap N$, we have $\mf{f}_1\cap\mf{f}_2\cap N\neq\emptyset$. So this implies that either $\mf{j}\cap N\subsetneq\mf{f}_1\cap N$ or $\mf{j}\cap N\subsetneq\mf{f}_2\cap N$, contradicting the fact that $\mf{k}\cap N\not\in\mc{H}_1(G,N)$.

Now, for each $\mf{f}\in\mc{F}$, let $\mscr{C}(\mf{f})\cu G\cdot\mf{k}$ be the subset of halfspaces that do not face $\mf{f}$ (including $\mf{f}$ itself). The above discussion shows that the sets $\mscr{C}(\mf{f})$ are pairwise disjoint and that their union is the entire $G\cdot\mf{k}$. Let us show that $\mscr{C}(\mf{f})$ is a chain.

Every halfspace in $\mscr{C}(\mf{f})$ faces every halfspace in $\mc{F}\setminus\{\mf{f}\}$. Since $\#\mc{F}\geq 2$, this implies that the halfspaces in $\mscr{C}(\mf{f})$ pairwise intersect. % including \mf{f} itself
No two elements of $\mscr{C}(\mf{f})$ are transverse. Thus, if $\mscr{C}(\mf{f})$ were not a chain, it would contain halfspaces $\mf{k}_1,\mf{k}_2$ with $\mf{k}_1^*\cap\mf{k}_2^*=\emptyset$. Since $\mf{f}$ does not face any element of $\mscr{C}(f)$, the halfspaces $\mf{k}_1$ and $\mf{k}_2$ would both contain $\mf{f}$, contradicting Claim~2.
 
Finally, let us show that, if $\mf{f}_1,\mf{f}_2\in\mc{F}$ are distinct, then each $\mf{k}_1\in\mscr{C}(\mf{f}_1)$ faces each $\mf{k}_2\in\mscr{C}(\mf{f}_2)$. Since $\mf{f}_2^*$ is contained in $\mf{k}_1$, but not in $\mf{k}_2$, we cannot have $\mf{k}_1\cu\mf{k}_2$. Similarly, we do not have $\mf{k}_2\cu\mf{k}_1$. Thus, if $\mf{k}_1$ and $\mf{k}_2$ were not facing, they would have to be disjoint. Then $\mf{k}_2\cu\mf{k}_1^*\cu\mf{f}_2$ and, since $(\mf{k}\cap N)^*$ does not lie in the $G$--orbit of $\mf{k}\cap N$, we have $\mf{k}_2\cap N\subsetneq\mf{k}_1^*\cap N\subsetneq\mf{f}_2\cap N$. Again, this contradicts the fact that $\mf{k}\cap N\not\in\mc{H}_1(G,N)$. 
\hfill$\blacksquare$

\smallskip
Now, consider a halfspace $\mf{h}\in\overline{\mc{H}}_{1/2}(G,N)$. By Remark~\ref{halfspaces of subsets}, there exists $\mf{k}\in\mscr{H}_N(M)$ with $\mf{h}=\mf{k}\cap N$. Note that the chains $\mscr{C}_i$ provided by Claim~3 are the only maximal totally-ordered subsets of $G\cdot\mf{k}$. Thus, the partition provided by the claim is unique and $G$ permutes the chains $\mscr{C}_i$. 

Let $\mf{g}_i$ be the union of all halfspaces in $\mscr{C}_i$. Since $\mf{h}=\mf{k}\cap N\not\in\mc{H}_1(G,N)$, all halfspaces in $\mscr{C}_i$ have the same intersection with $N$. Thus, $\mf{g}_i\cap N=\mf{k}_i\cap N$ for some $\mf{k}_i\in G\cdot\mf{k}$. In particular, $\mf{g}_i$ is not the entire $M$, hence $\mf{g}_i\in\mscr{H}(M)$. Since $G$ permutes the halfspaces $\mf{g}_i$, which are pairwise facing, we have $\mf{g}_i\in\overline{\mc{H}}_{1/2}(G,M)$. We conclude that there exists $\mf{g}\in\overline{\mc{H}}_{1/2}(G,M)$ with $\mf{g}\cap N=\mf{k}\cap N=\mf{h}$.

Finally, if $\mf{h}\in\mc{H}_{1/2}(G,N)$, we must have $\mf{g}\in\mc{H}_{1/2}(G,M)$. Otherwise, $\mf{g}$ would lie in $\mf{g}\in\mc{H}_0(G,M)$. This would clearly imply that $\mf{h}=\mf{g}\cap N$ lies in $\mc{H}_0(G,N)$, a contradiction.
\end{proof}

\begin{ex}
The following show that the various hypotheses in Proposition~\ref{cores of subalgebras} are necessary.
\begin{enumerate}
\item If $G\acts N$ has wall inversions, we can have $\overline{\mc{C}}(G,N)\not\cu N\cap\overline{\mc{C}}(G,M)$. Simply consider the situation in Example~\ref{empty reduced core} with $M=[-1,1]$ and $N=[-1,1]\setminus\{0\}$. Then $\overline{\mc{C}}(G,M)=\{0\}$ and $\overline{\mc{C}}(G,N)=\emptyset$.
\item If $G\acts M$ is not non-transverse, we can have $\overline{\mc{C}}(G,N)\not\supseteq N\cap\overline{\mc{C}}(G,M)$. Take $M=[0,1]^2$, $N=\{(0,0),(0,1),(1,0)\}$, and let $G$ be the group generated by the reflection in the diagonal through $(0,0)$ and $(1,1)$. Then $\overline{\mc{C}}(G,M)=M$ and $\overline{\mc{C}}(G,N)=\{(0,0)\}$.
\item If $G\acts M$ is not non-transverse, we can also have $\mc{C}(G,N)\not\supseteq N\cap\mc{C}(G,M)$. Let $\varphi$ be a homeomorphism of $[-1,1]$ that fixes $-1$, $0$, $1$, and no other point. Consider $M=[-1,1]^2$ and $N=[-1,1]^2\setminus[-1,0)^2$. Let $G\leq\aut M$ be the group generated by the transformation $(x,y)\mapsto (\varphi(y),\varphi(x))$. Then $\mc{C}(G,M)=M$ and $\mc{C}(G,N)=[0,1]^2$.
\end{enumerate}
\end{ex}

\begin{rmk}
In part~(3) of Proposition~\ref{cores of subalgebras}, the equality $N\cap\mc{C}(G,M)=\mc{C}(G,N)$ also holds if, instead of requiring the action $G\acts M$ to be non-transverse, we ask that $M$ be the underlying median algebra of a median space $X$ equipped with an isometric $G$--action.

Indeed, it then follows from part~(1) of Lemma~\ref{H_0 perp H_1} below that, if $\mf{k}\in\mc{H}_1(G,M)$, then $\mf{k}\cap N$ lies in $\mc{H}_1(G,N)$. It is thus immediate that every element of $\mc{H}_{1/2}(G,N)$ is the intersection with $N$ of an element of $\mc{H}_{1/2}(G,M)$ (which is what occupied most of the proof of Proposition~\ref{cores of subalgebras}).
\end{rmk} 

The following is a consequence of Proposition~\ref{abstract core} and the first two parts of Proposition~\ref{cores of subalgebras}.

\begin{cor}\label{cores intersect subalgebras}
If $G$ is finitely generated, then $\mc{C}(G)$ intersects every nonempty, $G$--invariant subalgebra of $M$. Moreover, $\overline{\mc{C}}(G)$ intersects every nonempty, $G$--invariant subalgebra $N\cu M$ such that $G\acts N$ has no wall inversions.
\end{cor}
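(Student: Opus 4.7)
The plan is to deduce both statements directly by combining Theorem~\ref{abstract core} (applied to the $G$--action on $N$ itself) with the inclusions from parts~(1) and~(2) of Proposition~\ref{cores of subalgebras}. The only preliminary observation is that $N$, being a median subalgebra of $M$, has rank at most $\rk M = r < +\infty$: any subalgebra of $N$ isomorphic to $\{0,1\}^k$ is in particular a subalgebra of $M$, so $\rk N \leq r$. Hence Theorem~\ref{abstract core} applies verbatim to the induced action $G \acts N$.

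For the first statement, let $N \cu M$ be a nonempty $G$--invariant subalgebra. Part~(1) of Theorem~\ref{abstract core} gives $\mc{C}(G, N) \neq \emptyset$. By part~(1) of Proposition~\ref{cores of subalgebras}, we have $\mc{C}(G, N) \cu N \cap \mc{C}(G, M)$, so $N \cap \mc{C}(G, M)$ is nonempty, as required. For the second statement, assume in addition that $G \acts N$ is without wall inversions. Then part~(2) of Theorem~\ref{abstract core} yields $\overline{\mc{C}}(G, N) \neq \emptyset$, and part~(2) of Proposition~\ref{cores of subalgebras} gives $\overline{\mc{C}}(G, N) \cu N \cap \overline{\mc{C}}(G, M)$, so the latter intersection is nonempty.

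There is no real obstacle here: the heavy lifting has been done in Theorem~\ref{abstract core} (nonemptiness of the cores) and in Proposition~\ref{cores of subalgebras} (the inclusion of the core of a subalgebra into the core of the ambient algebra). The corollary is simply the combination of these two inputs, once one notes that finite-rank is inherited by median subalgebras, so that Theorem~\ref{abstract core} is applicable to $G \acts N$.
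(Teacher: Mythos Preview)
Your proof is correct and follows exactly the approach indicated in the paper, which simply states that the corollary is a consequence of Theorem~\ref{abstract core} and the first two parts of Proposition~\ref{cores of subalgebras}. Your additional remark that $\rk N \leq \rk M$ (so that Theorem~\ref{abstract core} applies to $G\acts N$) is a sensible clarification of a point the paper leaves implicit.
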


\section{Compatible metrics.}\label{compatible metrics sect}

Let $M$ be a finite-rank median algebra. Let $G\acts M$ be an action by median automorphisms. In this section, we consider $G$--invariant pseudo-metrics on $M$ that are compatible with the median operator. The main results are Corollary~\ref{core splitting} and Proposition~\ref{core of g when metric}. We also prove Corollary~\ref{main median semisimple}.

\begin{defn}\label{compatible pseudo-metric defn}
A pseudo-metric $\eta\colon M\x M\ra[0,+\infty)$ is \emph{compatible} if, for all $x,y,z\in M$, we have $\eta(x,y)=\eta(x,m(x,y,z))+\eta(m(x,y,z),y)$. 
\end{defn}

Let $\mc{PD}(M)$ and $\mc{D}(M)$ be the sets, respectively, of all compatible pseudo-metrics on $M$ and of all compatible metrics. To avoid confusion, we will denote metrics by the letter $\delta$ and pseudo-metrics by $\eta$. We write $\mc{PD}(M)^G$ and $\mc{D}(M)^G$ for the sets of \emph{$G$--invariant} pseudo-metrics and metrics. 

If $\delta\in\mc{D}(M)$, then the pair $(M,\delta)$ is a median space (not necessarily a complete or connected one). In fact, one can view every median space as the data of an underlying median algebra and a compatible metric on it. We have $\mc{D}(M)^G\neq\emptyset$ exactly when the action $G\acts M$ arises from an isometric $G$--action on a median space. 

\begin{rmk}\label{metric completion rmk}
Consider $\delta\in\mc{D}(M)$ and let $(X,\delta)$ be the metric completion of $(M,\delta)$. Then $X$ is a complete median space with $\rk X=\rk M$. This follows from \cite[Proposition~2.21]{CDH} and \cite[Lemma~2.5]{Fio1}. Note that the subset $M\cu X$ is a dense median subalgebra.
\end{rmk}

\begin{rmk}\label{connected inversions}
If there exists $\delta\in\mc{D}(M)^G$ such that $(M,\delta)$ is connected, then $G\acts M$ has no wall inversions. Indeed, by Remark~\ref{metric completion rmk}, the metric completion $X$ of $(M,\delta)$ is a complete, finite-rank median space. By \cite[Proposition~B]{Fio1}, every halfspace of $X$ is either open or closed. By part~(1) of Remark~\ref{halfspaces of subsets}, every halfspace of $M$ is then either open or closed in $M$. Since $M$ is connected, a proper subset of $M$ and its complement can never be both closed or both open. Hence $G\acts M$ has no wall inversions.
\end{rmk}

\begin{rmk}\label{metric quotient rmk}
Consider $\eta\in\mc{PD}(M)$ and let $q\colon (M,\eta)\ra (X,\overline\eta)$ be the quotient metric space. Then $(X,\overline\eta)$ is a median space and $q\colon M\ra X$ is a surjective median morphism. By Remark~\ref{median homo rmk}, we have $\rk X\leq\rk M$.
\end{rmk}

\begin{lem}\label{H_0 perp H_1}
If $\mc{D}(M)^G\neq\emptyset$, then: % not necessarily complete
\begin{enumerate}
\item for every $g\in G$ and $\mf{h}\in\mc{H}_1(g)$, we have $\bigcap_{n\in\Z}g^n\mf{h}=\emptyset$;
\item the sets $\mc{H}_0(G)$ and $\mc{H}_1(G)$ are transverse.
\end{enumerate}
\end{lem}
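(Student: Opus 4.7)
The plan is to prove part~(1) first and derive part~(2) from it by a short iteration argument.

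For part~(2), suppose $\mathfrak{h}\in\mc{H}_0(G)$ and $\mathfrak{k}\in\mc{H}_1(G)$ were not transverse. By Remark~\ref{7 possibilities}, after possibly swapping $\mathfrak{h}$ or $\mathfrak{k}$ with its complement (allowed since $\mc{H}_0(G)^*=\mc{H}_0(G)$ and $\mc{H}_1(G)^*=\mc{H}_1(G)$), we may assume $\mathfrak{h}\subseteq\mathfrak{k}$. Choose $g\in G$ with $g\mathfrak{k}\subsetneq\mathfrak{k}$ and $n\geq 1$ with $g^n\mathfrak{h}=\mathfrak{h}$; the latter exists because the $G$-orbit, hence also the $\langle g\rangle$-orbit, of $\mathfrak{h}$ is finite. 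For every $k\in\Z$ one then has $\mathfrak{h}=g^{kn}\mathfrak{h}\subseteq g^{kn}\mathfrak{k}$, so the nonempty halfspace $\mathfrak{h}$ sits inside $\bigcap_{k\in\Z}g^{kn}\mathfrak{k}$. Since $\mathfrak{k}\in\mc{H}_1(g^n)$, part~(1) applied to $g^n$ and $\mathfrak{k}$ forces this intersection to be empty, a contradiction.

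For part~(1), fix $g\in G$ and $\mathfrak{h}\in\mc{H}_1(g)$; after replacing $g$ by a suitable positive power (which preserves the conclusion, since $\bigcap_{n\in\Z}g^n\mathfrak{h}\subseteq\bigcap_{n\in\Z}g^{kn}\mathfrak{h}$ for all $k\geq 1$), we may assume $g\mathfrak{h}\subsetneq\mathfrak{h}$. Suppose for contradiction that $K:=\bigcap_{n\in\Z}g^n\mathfrak{h}\neq\emptyset$. By Remark~\ref{bigcap chain halfspace}, $K$ is itself a halfspace, and $gK=K$ by construction, so $K\in\mc{H}_0(g)$ with $K\subsetneq\mathfrak{h}$. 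Pick $y\in K$ and $x\in\mathfrak{h}\setminus g\mathfrak{h}$, noting $g^my\in K\subseteq g^n\mathfrak{h}$ for all $m,n\in\Z$ while $x\in g^n\mathfrak{h}^*$ for every $n\geq 1$.

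The main step is to work in the metric completion $(X,\delta)$ of $(M,\delta)$, which by Remark~\ref{metric completion rmk} is a complete finite-rank median space on which $g$ extends isometrically and whose closed halfspaces are gate-convex. Let $\widehat{g^n\mathfrak{h}}$ denote the closure of $g^n\mathfrak{h}$ in $X$ and define $z_n:=\pi_{\widehat{g^n\mathfrak{h}}}(x)$ for $n\geq 0$, with $z_0:=x$. Since $\widehat{K}\subseteq\bigcap_n\widehat{g^n\mathfrak{h}}$, each $z_n$ lies on a median geodesic from $x$ to $y$, with $z_n\in I(x,z_{n+1})$; the telescoping identity gives $\sum_{n\geq 0}\delta(z_n,z_{n+1})\leq\delta(x,y)<\infty$. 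By $g$-equivariance, $g^{-n}z_n=\pi_{\widehat{\mathfrak{h}}}(g^{-n}x)$ and $g^{-n}z_{n+1}=\pi_{\widehat{g\mathfrak{h}}}(g^{-n}x)$, so each increment equals $\delta(\pi_{\widehat{\mathfrak{h}}}(g^{-n}x),\pi_{\widehat{g\mathfrak{h}}}(g^{-n}x))$. The contradiction then follows from a uniform positive lower bound on these increments, obtained by combining the fact that $g^{-n}x\in g^{-n}\mathfrak{h}\setminus g^{-n+1}\mathfrak{h}$ moves through strictly expanding halfspaces with Corollary~\ref{semisimple cor general}, which (after a further power of $g$) places us in the semisimple setting and lets the translation length of $g$ control these distances uniformly in $n$.

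The main obstacle is precisely establishing this uniform positive lower bound: a priori, the two projection sequences could drift together so that the increments shrink to zero, and individual walls can have zero $\delta$-width (as with the $\ell^1$ metric on $\R$), ruling out a naive wall-width argument. The robust alternative, which I expect to be the actual strategy of the paper, is to first reduce to the essential case via Remark~\ref{H=H_1 rmk}: after a further finite power of $g$ to avoid wall inversions, apply that remark to $\langle g\rangle\acts M$ to obtain a nonempty $g$-invariant convex subset $C'\subseteq M$ with $\mscr{H}(C')=\mc{H}_1(g,C')$, then invoke Lemma~\ref{ss lemma}(1) on $C'$, and combine with the $G$-invariant metric and the assumption $K\neq\emptyset$ to produce a halfspace of $C'$ whose $g$-iterates have nonempty intersection—contradicting Lemma~\ref{ss lemma}(1) and hence ruling out $K\neq\emptyset$.
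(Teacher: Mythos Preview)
Your argument for part~(2) is correct and is exactly the content of Remark~\ref{empty intersection implies transversality}, which the paper invokes.

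Your approach to part~(1), however, has a genuine gap. You correctly set up the telescoping sum $\sum_{n\geq 0}\delta(z_n,z_{n+1})\leq\delta(x,y)<\infty$ and correctly identify that the whole argument hinges on a uniform positive lower bound for the increments $\delta\bigl(\pi_{\widehat{\mf{h}}}(g^{-n}x),\pi_{\widehat{g\mf{h}}}(g^{-n}x)\bigr)$. But neither of your two proposed routes to this bound actually works. The translation-length route is too vague: semisimplicity of $g$ gives you a point $p\in\Min(g)$ with $\delta(p,g^np)=|n|\ell(g,\delta)$, but there is no mechanism linking $\ell(g,\delta)$ to the distance between the gate-projections of an arbitrary orbit $g^{-n}x$ onto the two fixed closed halfspaces $\widehat{\mf{h}}$ and $\widehat{g\mf{h}}$. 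The essential-core route fails for a more concrete reason: Proposition~\ref{H=H_1} (or Remark~\ref{H=H_1 rmk}) produces a convex $C'\cu M$ with $\mscr{H}_{C'}(M)\cu\mc{H}_1(g)$, but there is no guarantee that your particular $\mf{h}$ lies in $\mscr{H}_{C'}(M)$, nor that the putative $K=\bigcap_n g^n\mf{h}$ meets $C'$; if $K\cap C'=\emptyset$, Lemma~\ref{ss lemma}(1) gives no contradiction. Any attempt to bridge this via Corollary~\ref{core splitting} or Lemma~\ref{core of g when metric lem} would be circular, since both depend on the present lemma.

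The paper's proof avoids this entirely by appealing to an external structural fact: \cite[Proposition~B]{Fio1}, which controls chains of halfspaces in a \emph{complete} finite-rank median space. Passing to the completion $X$ and lifting $\mf{h}$ to $\mf{k}\in\mc{H}_1(g,X)$ via Remark~\ref{dynamics in subalgebras}, one gets directly that $\delta(g^{nk}\mf{k},\mf{k}^*)>0$ for some $k\geq 1$; the isometric action then forces $\delta(g^{nkm}\mf{h},\mf{h}^*)\to\infty$ as $m\to\infty$, which is incompatible with $\bigcap_n g^n\mf{h}\neq\emptyset$. This is precisely the ``uniform positive lower bound'' you were missing, and it comes from the structure theory of complete median spaces rather than from the dynamical machinery of Section~\ref{main sect}.
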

\begin{proof}
Part~(2) immediately follows from part~(1) and Remark~\ref{empty intersection implies transversality}, so we only prove part~(1).

Consider $g\in G$ and $\mf{h}\in\mc{H}_1(g,M)$. Pick $\delta\in\mc{D}(M)^G$ and let $X$ be the metric completion of $(M,\delta)$. By Remark~\ref{metric completion rmk}, this is a complete, finite-rank median space with an isometric $G$--action. By Remark~\ref{dynamics in subalgebras}, there exists $\mf{k}\in\mc{H}_1(g,X)$ such that $\mf{h}=\mf{k}\cap M$. Let $n\in\Z$ be such that $g^n\mf{k}\subsetneq\mf{k}$. \cite[Proposition~B]{Fio1} ensures that $\delta(g^{nk}\mf{k},\mf{k}^*)>0$ for some $k\geq 1$. Hence $\delta(g^{nk}\mf{h},\mf{h}^*)\geq\delta(g^{nk}\mf{k},\mf{k}^*)$ diverges for $k\ra +\infty$, proving part~(1).
\end{proof}

Lemmas~\ref{H_0 perp H_1} and~\ref{product median algebras} immediately imply:

\begin{cor}\label{core splitting}
If $\mc{D}(M)^G\neq\emptyset$, then the partitions in Lem\-ma~\ref{halfspaces of core} give rise to $G$--invariant splittings $\mc{C}(G)=\mc{C}_0(G)\x\mc{C}_1(G)$ and $\overline{\mc{C}}(G)=\overline{\mc{C}}_0(G)\x\mc{C}_1(G)$.
\end{cor}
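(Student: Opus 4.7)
The plan is to combine the wall partitions provided by Lemma~\ref{halfspaces of core} with the transversality statement of Lemma~\ref{H_0 perp H_1}, and then apply the product criterion of Lemma~\ref{product median algebras}. The main point is that transversality of halfspaces in $M$ passes to transversality of their restrictions to any convex subset, so the abstract transversality given by Lemma~\ref{H_0 perp H_1} automatically translates into the transversality condition needed for the product splitting of $\mc{C}(G)$ and $\overline{\mc{C}}(G)$.

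More concretely, consider first $\mc{C}(G)$. By Lemma~\ref{halfspaces of core}(1), we have a $G$--invariant partition of sets of halfspaces
\[ \mscr{H}(\mc{C}(G)) = \mc{H}_0(G,\mc{C}(G)) \sqcup \mc{H}_1(G,\mc{C}(G)), \]
which, by Remark~\ref{halfspaces of subsets}, corresponds under $\res_{\mc{C}(G)}$ to a $G$--invariant partition of the subset $\mscr{H}_{\mc{C}(G)}(M) \subseteq \mc{H}_0(G,M) \sqcup \mc{H}_1(G,M)$. Since $\mc{D}(M)^G \neq \emptyset$, Lemma~\ref{H_0 perp H_1}(2) asserts that $\mc{H}_0(G,M)$ and $\mc{H}_1(G,M)$ are transverse in $M$; because $\res_{\mc{C}(G)}$ preserves transversality (Remark~\ref{halfspaces of subsets}(2)), the two pieces of the partition of $\mscr{H}(\mc{C}(G))$ are transverse sets of halfspaces of $\mc{C}(G)$. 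Lemma~\ref{product median algebras} then yields a (possibly trivial) decomposition $\mc{C}(G) = \mc{C}_0(G) \times \mc{C}_1(G)$, where the factors correspond respectively to $\mc{H}_0(G,\mc{C}(G))$ and $\mc{H}_1(G,\mc{C}(G))$. Since both sets of walls are $G$--invariant (as opposed to merely being permuted), the action $G \acts \mc{C}(G)$ preserves each factor, and the splitting is $G$--invariant.

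The argument for $\overline{\mc{C}}(G)$ is identical, now using the partition $\mscr{H}(\overline{\mc{C}}(G)) = \overline{\mc{H}}_0(G,\overline{\mc{C}}(G)) \sqcup \mc{H}_1(G,\overline{\mc{C}}(G))$ from Lemma~\ref{halfspaces of core}(1) and the inclusion $\overline{\mc{H}}_0(G,M) \subseteq \mc{H}_0(G,M)$ from Remark~\ref{0 bar rmk}, which guarantees that the two parts remain transverse after restriction. There is no real obstacle here: everything is a bookkeeping exercise in combining the previously established wall partitions with the transversality supplied by the existence of a compatible $G$--invariant metric.
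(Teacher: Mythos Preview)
Your proof is correct and follows essentially the same approach as the paper, which simply states that the corollary is immediate from Lemmas~\ref{H_0 perp H_1} and~\ref{product median algebras}. You have expanded this by making explicit the passage from transversality in $M$ to transversality in the convex subset $\mc{C}(G)$ via Remark~\ref{halfspaces of subsets}(2), which is the only detail one might want to check; the rest is exactly the intended argument.
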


\begin{lem}\label{core of g when metric lem}
Consider $g\in\aut M$ with $\mc{D}(M)^{\langle g\rangle}\neq\emptyset$. Let $\mc{W}_1(g)$ be the set of walls determined by $\mc{H}_1(g)$. Then, for all $x\in\Min(g)$ and $y\in M$, we have:
\[\bigsqcup_{n\in\Z}\mscr{W}(g^nx|g^{n+1}x)=\mc{W}_1(g)\cu\bigcup_{n\in\Z}\mscr{W}(g^ny|g^{n+1}y).\]
\end{lem}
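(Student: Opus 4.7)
The plan is to split the claim into the equality $\bigsqcup_n \mscr{W}(g^n x | g^{n+1} x) = \mc{W}_1(g)$ and the inclusion $\mc{W}_1(g) \cu \bigcup_n \mscr{W}(g^n y | g^{n+1} y)$, and to handle both via the partition of $\mscr{H}(M)$ provided by Lemma~\ref{G and h}. For the forward inclusion $\bigsqcup_n \mscr{W}(g^n x | g^{n+1} x) \cu \mc{W}_1(g)$, I will fix a wall $\mf{w}$ separating $g^n x$ from $g^{n+1} x$, with halfspace $\mf{h}$ containing $g^n x$. The key first observation is that $x \in \Min(g)$, combined with the standard inclusion $\mscr{W}(p|r) \cu \mscr{W}(p|q) \cup \mscr{W}(q|r)$, forces $\mf{h}$ to contain precisely $\{g^k x : k \leq n\}$ among orbit points. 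Indeed, pairwise disjointness of $\{\mscr{W}(g^k x | g^{k+1} x)\}_{k \in \Z}$ implies $\mf{w} \notin \mscr{W}(g^k x | g^n x)$ for $k < n$ and $\mf{w} \notin \mscr{W}(g^{n+1} x | g^k x)$ for $k > n+1$, yielding the claimed orbit location.

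With this precise identification in hand, I will rule out each alternative from Lemma~\ref{G and h}. The case $\mf{h} \in \mc{H}_0(g)$ is excluded because $g^m \mf{h} = \mf{h}$ with $m \neq 0$ would force $g^{n+m} x \in \mf{h}$, contradicting the orbit location. The case $\mf{h} \in \overline{\mc{H}}_{1/2}(g)$ is excluded because there would exist $m$ with $g^m \mf{h}^* \cap \mf{h}^* = \emptyset$, yet $g^k x \in g^m \mf{h}^* \cap \mf{h}^*$ for all sufficiently large $k$; symmetrically, $\mf{h}^* \in \overline{\mc{H}}_{1/2}(g)$ is ruled out using orbit points $g^k x$ with $k$ sufficiently negative. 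Hence $\mf{h} \in \mc{H}_1(g)$, so $\mf{w} \in \mc{W}_1(g)$.

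For the inclusion $\mc{W}_1(g) \cu \bigcup_n \mscr{W}(g^n y | g^{n+1} y)$, I will invoke the metric hypothesis exactly once, through Lemma~\ref{H_0 perp H_1}(1): for any $\mf{h} \in \mc{H}_1(g)$, applying that lemma to both $\mf{h}$ and $\mf{h}^*$ (using $\mc{H}_1(g)^* = \mc{H}_1(g)$) yields $\bigcap_n g^n \mf{h} = \emptyset = \bigcap_n g^n \mf{h}^*$. Consequently, for every $y \in M$ there exist integers $a, b$ with $g^a y \in \mf{h}$ and $g^b y \in \mf{h}^*$, so telescoping along the orbit between $g^a y$ and $g^b y$ produces some $n$ for which $\mf{w}$ separates $g^n y$ from $g^{n+1} y$. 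Specialising this argument to $y = x$ also yields the reverse inclusion $\mc{W}_1(g) \cu \bigsqcup_n \mscr{W}(g^n x | g^{n+1} x)$, completing the desired equality.

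The main obstacle I anticipate is the clean execution of the first step: pinning down the precise orbit containment $\{g^k x : k \leq n\} \cu \mf{h}$, which is the single piece of information that allows all three non-$\mc{H}_1$ alternatives in Lemma~\ref{G and h} to be ruled out uniformly. Once that is settled, the argument is essentially mechanical, with Lemma~\ref{H_0 perp H_1}(1) providing the sole bridge between the purely combinatorial dynamics of halfspaces developed in Subsection~\ref{dynamics sect} and the metric hypothesis of the present section.
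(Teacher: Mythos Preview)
Your proposal is correct and follows essentially the same approach as the paper: both establish the orbit location $\{g^k x : k\leq n\}\cu\mf{h}$ and $\{g^k x : k\geq n+1\}\cu\mf{h}^*$ from the definition of $\Min(g)$, then use this to exclude the non-$\mc{H}_1$ pieces of the partition in Lemma~\ref{G and h}, and both invoke Lemma~\ref{H_0 perp H_1}(1) for the inclusion $\mc{W}_1(g)\cu\bigcup_n\mscr{W}(g^ny|g^{n+1}y)$. The only cosmetic difference is that the paper normalises to $n=0$ and phrases the exclusion via $\overline{\mc{H}}_0(g)$ rather than $\mc{H}_0(g)$, which amounts to the same thing since $\overline{\mc{H}}_0(g)\cu\mc{H}_0(g)$.
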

\begin{proof}
Part~(1) of Lemma~\ref{H_0 perp H_1} shows that every $\mf{h}\in\mc{H}_1(g)$ satisfies $\bigcap_{n\in\Z}g^n\mf{h}=\emptyset$. Thus, for every $y\in M$, the orbit $\langle g\rangle\cdot y$ intersects both $\mf{h}$ and $\mf{h}^*$. If $\mf{w}$ is the wall associated to $\mf{h}$, it follows that there exists $n\in\Z$ such that $\mf{w}\in\mscr{W}(g^ny|g^{n+1}y)$. This shows that $\mc{W}_1(g)\cu\bigcup_{n\in\Z}\mscr{W}(g^ny|g^{n+1}y)$ for every $y\in M$.

We are left to show that, for every $x\in\Min(g)$, we have $\mscr{W}(x|gx)\cu\mc{W}_1(g)$. Observe that, if $\mf{k}\in\mscr{H}(x|gx)$, then we have $g^nx\in\mf{k}$ for all $n\geq 1$, and $g^nx\in\mf{k}^*$ for all $n\leq 0$. Hence, for every $k\in\Z$, the two intersections $g^k\mf{k}\cap\mf{k}$ and $g^k\mf{k}^*\cap\mf{k}^*$ are both nonempty. In particular, $\mf{k}\not\in\overline{\mc{H}}_{1/2}(g)\sqcup\overline{\mc{H}}_{1/2}(g)^*$. We also have $\mf{k}\not\in\overline{\mc{H}}_0(g)$, since a proper power of $g$ cannot stabilise $\mf{k}$. We conclude that $\mscr{H}(x|gx)\cu\mc{H}_1(g)$, which completes the proof.
\end{proof}

\begin{ex}
Lemma~\ref{core of g when metric lem} can fail if $\mc{D}(M)^{\langle g\rangle}=\emptyset$. It suffices to consider $M=\R$ and let $g$ be an orientation-preserving homeomorphism with a single fixed point.
\end{ex}

Given $g\in\aut M$ and $\eta\in\mc{PD}^{\langle g\rangle}(M)$, we define the \emph{translation length}:
\[\ell(g,\eta):=\inf_{x\in M}\eta(x,gx)\in[0,+\infty).\]
The following is the main result of this section.

\begin{prop}\label{core of g when metric}
Let $g\in\aut M$ act stably without inversions.
\begin{enumerate}
\item If $\eta\in\mc{PD}(M)^{\langle g\rangle}$, then $\eta(x,gx)=\ell(g,\eta)$ for all $x\in\Min(g)$. Moreover, $\ell(g^n,\eta)=|n|\cdot\ell(g,\eta)$.
\item If $\delta\in\mc{D}(M)^{\langle g\rangle}$ and $y\in M$, then $y\in\Min(g)$ if and only if $\delta(y,gy)=\ell(g,\delta)$. 
\item If $g$ acts non-transversely, then, for all $\eta\in\mc{PD}(M)^{\langle g\rangle}$ and all $y\in M$, we have:
\[\eta(y,gy)=\ell(g,\eta)+2\eta(y,\overline{\mc{C}}(g)).\]
% note that closed+convex only implies gate-convex, when the metric is *complete*
\end{enumerate} 
\end{prop}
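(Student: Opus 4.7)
The plan for part~(1) is to first exploit the defining wall-disjointness at points $x\in\Min(g)$ to deduce a ``collinearity'' $g^jx\in I(g^ix,g^kx)$ whenever $i<j<k$. This uses the general identity $\mscr{W}(a|c)=\mscr{W}(a|b)\triangle\mscr{W}(b|c)$: under the disjointness hypothesis the symmetric difference is a disjoint union, so $\mscr{W}(a|b)\cap\mscr{W}(b|c)=\mscr{W}(a,c|b)=\emptyset$, which is exactly $b\in I(a,c)$. Definition~\ref{compatible pseudo-metric defn} applied iteratively then gives $\eta(x,g^nx)=|n|\cdot\eta(x,gx)$ for all $n\in\Z$. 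For an arbitrary $y\in M$, I would use the triangle inequality and $g$-invariance to write
\[n\,\eta(x,gx)=\eta(x,g^nx)\leq 2\,\eta(x,y)+n\,\eta(y,gy);\]
dividing by $n$ and letting $n\to\infty$ yields $\eta(x,gx)\leq\eta(y,gy)$, hence $\eta(x,gx)=\ell(g,\eta)$. The equality $\ell(g^n,\eta)=|n|\,\ell(g,\eta)$ then follows by applying part~(1) to $g^n$, noting that $\Min(g)\cu\Min(g^n)$ (pairwise disjointness of wall-sets persists under subsampling by multiples of $n$) and $\Min(g)\neq\emptyset$ by Corollary~\ref{semisimple cor general}.

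For part~(2), one direction is immediate from part~(1). For the converse, given $\delta(y,gy)=\ell(g,\delta)$, I would squeeze
\[n\,\ell(g,\delta)=\ell(g^n,\delta)\leq\delta(y,g^ny)\leq n\,\delta(y,gy)=n\,\ell(g,\delta),\]
using part~(1) on the left and triangle inequality on the right. Equalities throughout force $\delta(y,g^ny)=\delta(y,g^ky)+\delta(g^ky,g^ny)$ for each $0\leq k\leq n$. In a median metric (not merely pseudo-metric) space, equality in the triangle inequality is equivalent to the midpoint lying in the interval, so $g^ky\in I(y,g^ny)$. Arguing as in part~(1), this forces the wall-sets $\mscr{W}(g^iy|g^{i+1}y)$ to be pairwise disjoint for $0\leq i<n$; taking $n\to\infty$ and treating negative indices by applying the same argument to $g^{-1}$ yields $y\in\Min(g)$.

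Part~(3) is the most delicate. By Proposition~\ref{non-transverse prop}, $\Min(g)=\overline{\mc{C}}(g)$, and by Proposition~\ref{gate-convex C bar}, $\overline{\mc{C}}(g)$ is gate-convex; let $\pi\colon M\ra\overline{\mc{C}}(g)$ denote the gate-projection. The critical step is to prove that $\pi(y)\in I(y,gy)$. I would argue by contradiction: given $\mf{h}\in\mscr{H}(y,gy|\pi(y))$, the gate-projection identity $\mscr{H}(y|\pi(y))=\mscr{H}(y|\overline{\mc{C}}(g))$ forces $\overline{\mc{C}}(g)\cu\mf{h}$, so $\mf{h}\not\in\mscr{H}_{\overline{\mc{C}}(g)}(M)$. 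Lemma~\ref{halfspaces of core} then places $\mf{h}\in\overline{\mc{H}}_{1/2}(g)\cup\overline{\mc{H}}_{1/2}(g)^*$, and since $\pi(y)\in\mf{h}\cap\overline{\mc{C}}(g)$ rules out the case $\mf{h}\in\overline{\mc{H}}_{1/2}(g)^*$ (in which $\overline{\mc{C}}(g)$ would sit in $\mf{h}^*$), we conclude $\mf{h}\in\overline{\mc{H}}_{1/2}(g)$. Remark~\ref{non-transverse rmk} then forces $\mf{h}$ and $g^{-1}\mf{h}$ to be facing, contradicting the fact that $y\in\mf{h}^*\cap g^{-1}\mf{h}^*$ (since $y,gy\in\mf{h}^*$). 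With the claim in hand, $g$-equivariance of $\pi$ yields $g\pi(y)=\pi(gy)\in I(\pi(y),gy)$, and one may decompose
\[\eta(y,gy)=\eta(y,\pi(y))+\eta(\pi(y),g\pi(y))+\eta(g\pi(y),gy)=2\,\eta(y,\pi(y))+\ell(g,\eta),\]
using $g$-invariance of $\eta$ and part~(1) applied to $\pi(y)\in\Min(g)$. The identity $\eta(y,\pi(y))=\eta(y,\overline{\mc{C}}(g))$ persists for pseudo-metrics, since $\pi(y)\in I(y,w)$ for every $w\in\overline{\mc{C}}(g)$ gives $\eta(y,w)\geq\eta(y,\pi(y))$. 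The main obstacle is the interval claim $\pi(y)\in I(y,gy)$; the rest reduces to bookkeeping once the halfspace classification of Lemma~\ref{halfspaces of core} and non-transversality via Remark~\ref{non-transverse rmk} are leveraged.
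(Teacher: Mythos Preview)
Your argument for part~(1) is correct and more elementary than the paper's. The paper reduces to a genuine metric via the quotient map, then invokes the wall--measure representation of $\delta$ together with Lemma~\ref{core of g when metric lem} (which pins down $\mc{W}_1(g)$ as the disjoint union $\bigsqcup_n\mscr{W}(g^nx|g^{n+1}x)$ for $x\in\Min(g)$) to compare $\mu(\mscr{W}(x|gx))$ and $\mu(\mscr{W}(y|gy))$ orbit--by--orbit. Your asymptotic argument via $\eta(x,g^nx)=n\,\eta(x,gx)$ and the triangle inequality bypasses all of this and works directly for pseudo-metrics. Part~(2) is essentially the paper's proof.

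In part~(3) there is a small but genuine gap. From $\overline{\mc{C}}(g)\cu\mf{h}$ you correctly get $\mf{h}\notin\mscr{H}_{\overline{\mc{C}}(g)}(M)$, but Lemma~\ref{halfspaces of core}(2) only yields $\mf{h}\notin\overline{\mc{H}}_0(g)$; it does \emph{not} exclude $\mf{h}\in\mc{H}_1(g)$, so your conclusion $\mf{h}\in\overline{\mc{H}}_{1/2}(g)\cup\overline{\mc{H}}_{1/2}(g)^*$ is unjustified. One can patch this (non-transversality together with $y,gy\in\mf{h}^*$ and $\overline{\mc{C}}(g)\cu\mf{h}\cap g\mf{h}$ forces $g\mf{h}\subsetneq\mf{h}$ or the reverse; then $\bigcap_{n\geq 0}g^n\mf{h}$ is a $g$--fixed halfspace in $\overline{\mc{H}}_0(g)$ containing $\overline{\mc{C}}(g)$, contradicting Lemma~\ref{halfspaces of core}(2)). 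But the cleanest fix is to use the \emph{second} assertion of Proposition~\ref{non-transverse prop}, which you already cite: it gives $I(y,gy)\cap\overline{\mc{C}}(g)\neq\emptyset$ directly, and then gate-convexity yields $\pi(y)\in I(y,z)\cu I(y,gy)$ for any $z$ in that intersection. This is exactly how the paper handles the step (phrased as $\mscr{W}(y,gy|\overline{\mc{C}}(g))=\emptyset$). The rest of your decomposition is fine.
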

\begin{proof}
We begin with part~(1). We can assume that $\eta$ is a genuine metric. Indeed, if $q\colon M\ra X$ is the quotient median space, it follows from Remarks~\ref{metric quotient rmk} and~\ref{median homo rmk} that $q(\Min(g,M))\cu\Min(g,X)$.

Thus, it suffices to consider $\delta\in\mc{D}(M)^{\langle g\rangle}$. By \cite[Theorem~5.1]{CDH}, there exists a $\langle g\rangle$--invariant measure $\mu$ on $\mscr{W}(M)$ such that $\delta(u,w)=\mu(\mscr{W}(u|w))$ for all $u,w\in M$. Recall that, by Corollary~\ref{semisimple cor general}, the set $\Min(g)$ is nonempty. Consider points $x\in\Min(g)$ and $y\in M$.

Let $\Om=\bigcup_{n\in\Z}\mscr{W}(g^ny|g^{n+1}y)\cu\mscr{W}(M)$. By Lemma~\ref{core of g when metric lem}, $\Om$ contains $\mscr{W}(y|gy)$ and $\mscr{W}(x|gx)$. Moreover, every $\langle g\rangle$--orbit in $\Om$ intersects $\mscr{W}(y|gy)$ in at least one element, while it intersects $\mscr{W}(x|gx)$ in at most one element (since the sets $\mscr{W}(g^nx|g^{n+1}x)$ are pairwise disjoint, as $x\in\Min(g)$). Recalling that $\mu$ is $\langle g\rangle$--invariant, it follows that:
\[\delta(y,gy)=\mu(\mscr{W}(y|gy))\geq\mu(\mscr{W}(x|gx))=\delta(x,gx).\]
This inequality holds for every $x\in\Min(g)$ and $y\in M$, so $\delta(x,gx)=\ell(g,\delta)$ for all $x\in\Min(g)$. 

For every $n\in\Z\setminus\{0\}$, we have $\Min(g)\cu\Min(g^n)$. Thus, if $\eta\in\mc{PD}(M)^{\langle g\rangle}$ and $x\in\Min(g)$, then:
\[\ell(g^n,\eta)=\eta(x,g^nx)=|n|\cdot\eta(x,gx)=|n|\cdot\ell(g,\eta).\]
This proves part~(1).

We now address part~(2). Consider $\delta\in\mc{D}(M)^{\langle g\rangle}$ and $y\in M$ with $\delta(y,gy)=\ell(g,\delta)$. The triangle inequality gives $\delta(y,g^ny)\leq n\cdot\ell(g,\delta)$ for all $n\geq 1$. Part~(1) implies that $\delta(y,g^ny)=n\cdot\ell(g,\delta)$. Thus, for all $k\leq m\leq n$ in $\Z$, we have $\delta(g^ky,g^ny)=\delta(g^ky,g^my)+\delta(g^my,g^ny)$. This implies that $m(g^ky,g^my,g^ny)=g^my$. Hence the sets $\mscr{W}(g^iy|g^{i+1}y)$ are pairwise disjoint, and $y$ must lie in $\Min(g)$. This proves part~(2).

Finally, let us prove part~(3). By Proposition~\ref{gate-convex C bar}, the set $\overline{\mc{C}}(g)$ is gate-convex. Let $\pi\colon M\ra\overline{\mc{C}}(g)$ be the gate-projection. Since $\overline{\mc{C}}(g)$ is $\langle g\rangle$--invariant, we have $\pi(gy)=g\pi(y)$ for every $y\in M$. Hence:
\[\mscr{W}(y|gy)\cu\mscr{W}(y|\pi(y))\cup\mscr{W}(\pi(y)|g\pi(y))\cup\mscr{W}(\pi(gy)|gy).\]
Since $\pi$ is the gate-projection, we have:
\begin{align*}
\mscr{W}(y|g\pi(y))&=\mscr{W}(y|\pi(y))\sqcup\mscr{W}(\pi(y)|g\pi(y)), & \mscr{W}(\pi(y)|gy)&=\mscr{W}(\pi(y)|g\pi(y))\sqcup\mscr{W}(\pi(gy)|gy).
\end{align*}
By Proposition~\ref{non-transverse prop}, the set $\mscr{W}(y|\pi(y))\cap\mscr{W}(gy|\pi(gy))=\mscr{W}(y,gy|\overline{\mc{C}}(g))$ is empty, hence:
\[\mscr{W}(y|gy)=\mscr{W}(y|\pi(y))\sqcup\mscr{W}(\pi(y)|g\pi(y))\sqcup\mscr{W}(\pi(gy)|gy).\]
Part~(1) and Proposition~\ref{non-transverse prop} yield $\eta(x,gx)=\ell(g,\eta)$ for all $\eta\in\mc{PD}(M)^{\langle g\rangle}$ and $x\in\Min(g)$. Thus:
\[\eta(y,gy)=\eta(y,\pi(y))+\eta(\pi(y),g\pi(y))+\eta(\pi(gy),gy)=\ell(g,\eta)+2\eta(y,\overline{\mc{C}}(g)).\]
\end{proof}

We can now deduce Corollary~\ref{main median semisimple} from Corollary~\ref{semisimple cor general}.

\begin{proof}[Proof of Corollary~\ref{main median semisimple}]
Since $X$ is connected, Remark~\ref{connected inversions} shows that $g\in\isom X$ acts stably without inversions. By Corollary~\ref{semisimple cor general}, there exists a point $x\in\Min(g)$. By part~(1) of Proposition~\ref{core of g when metric}, the point $x$ realises the translation length of $g$.

If $X$ is geodesic and $g$ does not fix $x$, let $\alpha$ be a geodesic segment joining $x$ and $gx$. Since $x\in\Min(g)$, the segments $g^n\alpha$ glue to form the required axis of $g$.
\end{proof}

The following motivates our notation from Subsection~\ref{dynamics sect} by comparing it to \cite{CS}.

\begin{rmk}\label{explanation of 0,1,1/2}
Suppose that there exists $\delta\in\mc{D}(M)^G$. Fix a basepoint $x_0\in M$. By analogy with \cite[Subsection~3.3]{CS}, one could call a halfspace $\mf{h}\in\mscr{H}(M)$:
\begin{itemize}
\item \emph{fully inessential} if the points of $G\cdot x_0$ have uniformly bounded distance from $\mf{h}$ and $\mf{h}^*$;
\item \emph{fully essential} if $G\cdot x_0$ contains points arbitrarily far from $\mf{h}^*$ and arbitrarily far from $\mf{h}$;
\item \emph{half-essential} if $G\cdot x_0$ contains points arbitrarily far from $\mf{h}^*$, but $G\cdot x_0$ has uniformly bounded distance from $\mf{h}$.
\end{itemize}
These three classes of halfspaces are closely related to our sets $\mc{H}_0(G)$, $\mc{H}_1(G)$ and $\mc{H}_{1/2}(G)$, respectively, and they motivate our notation. The sets $\mc{H}_{\bullet}(G)$ have the advantage of being evidently independent of the choice of a metric $\delta$.

It is clear that the elements of $\mc{H}_0(G)$ are fully inessential. As in the proof of part~(1) of Lemma~\ref{H_0 perp H_1}, one sees that the elements of $\mc{H}_1(G)$ are fully essential. If $G$ is finitely generated, it follows from part~(1) of Theorem~\ref{abstract core} that no fully essential halfspace can lie in $\mc{H}_{1/2}(G)$. 

In conclusion, if $G$ is finitely generated, then $\mc{H}_1(G)$ always coincides with the set of fully essential halfspaces and $\mc{H}_0(G)$ is always contained in the set of fully inessential halfspaces. However, $\mc{H}_{1/2}(G)$ can contain both half-essential and fully inessential halfspaces (the latter e.g.\ when $M$ is a locally infinite tree $T$ and $G$ fixes an infinite-valence vertex, while acting freely on its neighbours).
\end{rmk}

\section{Actions of polycyclic groups.}\label{polycyclic sect}

This section is devoted to the proof of Corollary~\ref{corintro polycyclic} and its immediate consequence Corollary~\ref{flattorus cor}.

\subsection{Lineal median algebras.}

Let $M$ be a finite-rank median algebra. Recall from Subsection~\ref{ss sect}, that we denote by $\overline M$ the zero-completion of $M$. 

\begin{defn}
Given $\xi^{\pm}\in\overline M$, we say that $M$ is \emph{lineal with endpoints $\xi^{\pm}$} if $M\cu I(\xi^-,\xi^+)$. 
\end{defn}

\begin{lem}\label{lineal lem}
$M$ is lineal if and only if $\mscr{H}(M)$ can be partitioned into two ultrafilters.
\end{lem}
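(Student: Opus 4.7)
The plan is to prove the two implications separately, relying on Lemma~\ref{nonempty intersection criterion} and the identification $\mscr{H}(M)\simeq\mscr{H}_M(\overline M)$ to transport halfspace information between $M$ and its zero-completion.

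For the forward direction, suppose $M\cu I(\xi^-,\xi^+)$ in $\overline M$. I would define $\sigma^\pm:=\{\mf{h}\in\mscr{H}(M) \mid \xi^\pm\in\mf{h}\}$, reading $\mf{h}\in\mscr{H}(M)$ as an element of $\mscr{H}_M(\overline M)$. The key observation is that $\xi^-$ and $\xi^+$ must lie on opposite sides of every $\mf{h}\in\mscr{H}(M)$: if both lay in $\mf{h}$, convexity of $\mf{h}$ would give $I(\xi^-,\xi^+)\cu\mf{h}$, hence $M\cu\mf{h}$, contradicting $\mf{h}^*\cap M\neq\emptyset$. This yields the partition $\mscr{H}(M)=\sigma^+\sqcup\sigma^-$. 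To see that $\sigma^+$ is an ultrafilter, any $\mf{h},\mf{k}\in\sigma^+$ both contain $\xi^+$ in $\overline M$, and since $M,\mf{h},\mf{k}$ pairwise intersect in $\overline M$, Helly's lemma in $\overline M$ gives a point in $\mf{h}\cap\mf{k}\cap M$.

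For the backward direction, given the partition $\mscr{H}(M)=\sigma^+\sqcup\sigma^-$, I would construct the point $\xi^+\in\overline M$ interval-by-interval. For each $I=I(x,y)\in\I$, define $\xi^+(I)\in I$ as the unique point of $I$ lying in every halfspace of $\sigma^+\cap\mscr{H}_I(M)$. Existence would follow by applying Lemma~\ref{nonempty intersection criterion} to $\mc{H}(I):=\{\mf{h}\in\mscr{H}(M)\mid I\cu\mf{h}\}\cup(\sigma^+\cap\mscr{H}_I(M))$, noting that $\bigcap\{\mf{h}\mid I\cu\mf{h}\}=I$ so the resulting intersection sits inside $I$. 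Uniqueness is immediate: two candidates would be separated by some $\mf{h}\in\mscr{H}_I(M)$, but both would lie on the $\sigma^+$ side of $\mf{h}$. The consistency condition $\pi_{I\cap J}(\xi^+(I))=\pi_{I\cap J}(\xi^+(J))$ would follow by showing that $\pi_K(\xi^+(I))=\xi^+(K)$ for $K=I\cap J\cu I$: since $\mscr{H}_K(M)\cu\mscr{H}_I(M)$, for any $\mf{h}\in\sigma^+\cap\mscr{H}_K(M)$ the gate-projection identity $\mscr{H}(\xi^+(I)\mid \pi_K(\xi^+(I)))=\mscr{H}(\xi^+(I)\mid K)$ forces $\pi_K(\xi^+(I))\in\mf{h}$, so uniqueness of $\xi^+(K)$ does the rest.

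The main obstacle is verifying the chain condition for $\mc{H}(I)$: for a chain $\mscr{C}\cu\sigma^+\cap\mscr{H}_I(M)$ I need $\bigcap\mscr{C}$ to be a nonempty halfspace still in $\sigma^+\cap\mscr{H}_I(M)$. Since each $\mf{h}\in\mscr{C}$ separates $x,y$ (by the earlier characterisation of $\mscr{H}_I(M)$), I would split $\mscr{C}$ according to whether $y\in\mf{h}$ or $x\in\mf{h}$; chain-totality makes one side cofinal, and the common endpoint sits in $\bigcap\mscr{C}$, so $\bigcap\mscr{C}$ is a nonempty halfspace lying in $\mscr{H}_I(M)$. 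To see that $\bigcap\mscr{C}\in\sigma^+$, I would use a small side-lemma: if $\bigcap\mscr{C}\in\sigma^-$, then for any $\mf{h}\in\mscr{C}$ we have $\bigcap\mscr{C}\cap\mf{h}^*=\emptyset$ (as $\bigcap\mscr{C}\cu\mf{h}$), yet both $\bigcap\mscr{C}$ and $\mf{h}^*$ lie in the ultrafilter $\sigma^-$, contradicting pairwise intersection (unless $\bigcap\mscr{C}=\mf{h}^*$, which itself would force $\mf{h}^*\cap\mf{h}=\emptyset\neq\bigcap\mscr{C}$, absurd).

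Finally, to conclude $M\cu I(\xi^-,\xi^+)$, for each $m\in M$ and each $I\in\I$ I would verify $\pi_I(m)\in I_M(\xi^-(I),\xi^+(I))$, which (translated through the formula for the median on $\overline M$) gives $m(\xi^-,\iota(m),\xi^+)=\iota(m)$. The verification is short: a halfspace $\mf{h}\in\mscr{H}(M)$ contains both $\xi^\pm(I)$ only if $\mf{h}\not\in\mscr{H}_I(M)$ (since $\xi^\pm(I)$ are on opposite sides of every wall in $\mscr{H}_I(M)$ by construction), hence either $I\cu\mf{h}$ or $I\cu\mf{h}^*$; the latter is impossible as $\xi^\pm(I)\in I\cap\mf{h}$, and the former gives $\pi_I(m)\in I\cu\mf{h}$ for free.
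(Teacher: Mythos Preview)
Your proposal is correct and follows essentially the same route as the paper. Both directions match: the forward direction uses the identification $\mscr{H}(M)\simeq\mscr{H}_M(\overline M)$ to read off the two ultrafilters from $\xi^\pm$, and the backward direction constructs $\xi^\pm$ interval-by-interval via Lemma~\ref{nonempty intersection criterion}, checking the chain condition by splitting along $\mscr{H}(x|y)$ versus $\mscr{H}(y|x)$ and using the ultrafilter property to rule out $\bigcap\mscr{C}\in\sigma^-$. The only cosmetic difference is that the paper applies Lemma~\ref{nonempty intersection criterion} directly inside $I$ to $\s_I=\res_I(\sigma^+\cap\mscr{H}_I(M))$, whereas you apply it in $M$ to the larger set $\{\mf{h}\mid I\cu\mf{h}\}\cup(\sigma^+\cap\mscr{H}_I(M))$; these are equivalent, and your version simply makes explicit the ``straightforward'' consistency and containment checks that the paper leaves to the reader. (Your parenthetical ``unless $\bigcap\mscr{C}=\mf{h}^*$'' is unnecessary, since $\bigcap\mscr{C}\cu\mf{h}$ already forces $\bigcap\mscr{C}\cap\mf{h}^*=\emptyset$ with no exceptional case.)
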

\begin{proof}
If $M$ is lineal with endpoints $\xi^{\pm}$, then the intersections between $\mscr{H}(M)\simeq\mscr{H}_M(\overline M)$ and the subsets $\mscr{H}(\xi^-|\xi^+),\mscr{H}(\xi^+|\xi^-)\cu\mscr{H}(\overline M)$ provide the required partition into two ultrafilters.

Conversely, suppose that $\mscr{H}(M)=\s\sqcup\s^*$ is a partition into two ultrafilters. For every interval $I=I(x,y)\cu M$, we have a partition $\mscr{H}_I(M)=(\s\cap\mscr{H}_I(M))\sqcup(\s^*\cap\mscr{H}_I(M))$. By Remark~\ref{halfspaces of subsets}, this corresponds to a partition $\mscr{H}(I)=\s_I\sqcup\s_I^*$, where $\s_I$ and $\s_I^*$ are ultrafilters.

The sets $\s_I$ and $\s_I^*$ satisfy the hypotheses of Lemma~\ref{nonempty intersection criterion}. In order to see this, consider a chain $\mscr{C}\cu\s_I$. Then either $\mscr{C}\cap\mscr{H}(x|y)$ or $\mscr{C}\cap\mscr{H}(y|x)$ is cofinal in $\mscr{C}$. Hence $\bigcap\mscr{C}$ is nonempty and it is a halfspace of $I$. Note that, given any $\mf{h}\in\mscr{C}$, the halfspace $\mf{h}^*$ lies in $\s_I^*$ and is disjoint from $\bigcap\mscr{C}$. Hence $\bigcap\mscr{C}\not\in\s_I^*$, and $\bigcap\mscr{C}\in\s_I$ as required.

This shows that, for every interval $I\cu M$, there exist two points $z_I^{\pm}\in I$ such that $I=I(z_I^-,z_I^+)$ and $z_I^+$ (resp.\ $z_I^-$) lies in the intersection of all halfspaces in $\s\cap\mscr{H}_I(M)$ (resp.\ in $\s^*\cap\mscr{H}_I(M)$). It is straightforward to check that the functions $I\mapsto z_I^+$ and $I\mapsto z_I^-$ define the required points $\xi^{\pm}$ in the zero-completion of $M$.
\end{proof}

\begin{rmk}\label{finitely many partitions}
If $\mscr{H}(M)=\s\sqcup\s^*=\tau\sqcup\tau^*$ are distinct partitions into ultrafilters, we can write:
\[\mscr{H}(M)=[(\s\cap\tau)\sqcup(\s\cap\tau)^*] \sqcup [(\s^*\cap\tau)\sqcup(\s^*\cap\tau)^*].\]
Note that every halfspace in $\s\cap\tau$ is transverse to every halfspace in $\s^*\cap\tau$. Thus, Lemma~\ref{product median algebras} gives a nontrivial product splitting of $M$. It follows that there are only finitely many distinct partitions of $\mscr{H}(M)$ into two ultrafilters.
% Indeed: consider the De Rham decomposition of $M$, every partition of $\mscr{H}(M)$ into two ultrafilters must be trivial on each factor.
\end{rmk}

\begin{rmk}
Suppose $M$ is lineal and $G\acts M$ is an action by median automorphisms. Then there exist a finite-index subgroup $H\leq G$ and $H$--fixed points $\xi^{\pm}\in\overline M$ such that $M\cu I(\xi^-,\xi^+)$. This follows from Remark~\ref{finitely many partitions} and Lemma~\ref{lineal lem}.
\end{rmk}

\subsection{Lineal median spaces.}

We now restrict to the setting of (not necessarily complete or connected) finite-rank median spaces.

The proof of the next lemma requires a certain familiarity with our previous work in \cite{Fio1}. Still, this is a rather predictable result and we hope that the reader will not find this troubling.

\begin{lem}\label{lineal completion lem}
\begin{enumerate}
\item[]
\item If $X$ is a lineal, finite-rank median space, then its metric completion $\wh X$ is also lineal. 
\item Every lineal median space of rank $r$ can be isometrically embedded in $(\R^r,d_{\ell_1})$.
\end{enumerate}
\end{lem}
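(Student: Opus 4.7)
Plan: I would prove part~(1) first by lifting the ultrafilter partition of $X$ to $\wh X$, and then prove part~(2) by constructing coordinate functions from a Dilworth-style decomposition of the walls.

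For part~(1): By Lemma~\ref{lineal lem}, $X$ lineal gives a partition $\mscr H(X) = \sigma \sqcup \sigma^*$ into ultrafilters. I plan to define $\wh\sigma := \{\mf h \in \mscr H(\wh X) : \mf h \cap X \in \sigma\}$ and show this yields an analogous partition $\mscr H(\wh X) = \wh\sigma \sqcup \wh\sigma^*$. The crux is that, for every $\mf h \in \mscr H(\wh X)$, the restriction $\mf h \cap X$ is again a halfspace of $X$: appealing to \cite[Proposition~B]{Fio1}, every halfspace of the complete, finite-rank median space $\wh X$ is open or closed, and in either case has nonempty interior, which intersects the dense subalgebra $X \cu \wh X$. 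Thus both $\mf h \cap X$ and $\mf h^* \cap X$ are nonempty and convex in $X$. The ultrafilter properties of $\wh \sigma$ then follow from those of $\sigma$: $\mf h \cap \mf k \supseteq (\mf h \cap X) \cap (\mf k \cap X) \neq \emptyset$ for $\mf h, \mf k \in \wh\sigma$, and $\wh \sigma$ contains exactly one halfspace of each wall because the restriction map preserves complementation. Lemma~\ref{lineal lem} then gives that $\wh X$ is lineal.

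For part~(2): Invoke the measure $\mu$ on $\mscr W(M)$ from \cite[Theorem~5.1]{CDH} realising the metric via $d(x, y) = \mu(\mscr W(x|y))$. The ultrafilter properties of $\sigma$ and $\sigma^*$ force any two $\mf h, \mf k \in \sigma$ to be either nested or transverse: the intersections $\mf h \cap \mf k$ and $\mf h^* \cap \mf k^*$ are automatically nonempty, so the only obstructions to transversality are that one of $\mf h \cap \mf k^*$ or $\mf h^* \cap \mf k$ is empty, i.e., nesting. Consequently, antichains in the poset $(\sigma, \subseteq)$ correspond to sets of pairwise-transverse halfspaces and so have size $\leq r$; Dilworth's theorem partitions $\sigma$ into $s \leq r$ chains $\mc C_1, \ldots, \mc C_s$. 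Correspondingly, $\mscr W(M) = \mscr W_1 \sqcup \cdots \sqcup \mscr W_s$ with each $\mscr W_i$ totally ordered by nesting.

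Fixing a basepoint $x_0 \in M$, each chain $\mc C_i$ yields a signed coordinate $f_i \colon M \to \R$ by setting
\[
f_i(x) = \mu\bigl(\{\mf h \in \mc C_i : x \in \mf h,\ x_0 \in \mf h^*\}\bigr) - \mu\bigl(\{\mf h \in \mc C_i : x_0 \in \mf h,\ x \in \mf h^*\}\bigr).
\]
Because $\mc C_i$ is a chain, the walls in $\mscr W_i$ separating $x$ from $y$ form an interval in that chain, so $|f_i(x) - f_i(y)| = \mu(\mscr W_i(x|y))$. Summing over $i$ yields $d(x,y) = \sum_i |f_i(x) - f_i(y)|$, and therefore $(f_1, \ldots, f_s) \colon M \hookrightarrow (\R^s, d_{\ell^1})$ is an isometric embedding; padding with zero coordinates gives the required embedding into $\R^r$.

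The main obstacle is the nonempty-restriction assertion in (1): showing that every halfspace of $\wh X$ meets the dense subset $X$. Density handles open halfspaces immediately, but for closed halfspaces this depends on the subtler structural fact from \cite{Fio1} that closed halfspaces of complete finite-rank median spaces have nonempty interior. A secondary technical point in (2) is to ensure the Dilworth decomposition of $\sigma$ can be chosen so that each $\mc C_i$ is $\mu$-measurable, which follows from the Borel structure on the wall space of a complete median space.
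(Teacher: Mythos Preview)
Your Part~(1) contains a genuine gap: the assertion that every closed halfspace of a complete finite-rank median space has nonempty interior is false, and \cite[Proposition~B]{Fio1} does not say this. A concrete counterexample is $\wh X=[0,1]$ with the halfspace $\{0\}$, which is closed with empty interior. Correspondingly, if $X=(0,1]\cap\Q$ (a lineal rank--$1$ median space with completion $[0,1]$), the halfspace $\{0\}\in\mscr H(\wh X)$ satisfies $\{0\}\cap X=\emptyset$, so your restriction map $\mf h\mapsto\mf h\cap X$ is not defined on all of $\mscr H(\wh X)$ and your $\wh\sigma\sqcup\wh\sigma^*$ need not cover $\mscr H(\wh X)$. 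The paper's proof confronts exactly this: it does not claim $\mscr H_X(\wh X)=\mscr H(\wh X)$, but instead observes that the halfspaces in $\mscr H(\wh X)\setminus\mscr H_X(\wh X)$ are \emph{non-thick} and hence form a $\mu$--null set, and then upgrades Lemma~\ref{lineal lem} (via \cite[Corollary~3.11(1)]{Fio1}) to allow a null exceptional set in the partition. Your approach can in fact be repaired without measure theory by enlarging $\wh\sigma$ to include every halfspace $\mf h\in\mscr H(\wh X)$ with $\mf h\supseteq X$ (and $\wh\sigma^*$ symmetrically); the ultrafilter axioms are then easy to check. But as written, the key step is unsupported and false.

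For Part~(2) your route differs from the paper's, which simply invokes Part~(1) and then \cite[Proposition~2.19]{Fio1}. Your explicit Dilworth-plus-signed-measure construction is essentially a direct proof of that proposition in the lineal case and is morally correct, though the measurability of the chains $\mc C_i$ is a real technical point: an abstract Dilworth decomposition of $\sigma$ need not produce measurable pieces, so you would have to argue (for instance by first passing to the completion and using the canonical product decomposition of a complete lineal space) that a measurable choice exists. The paper sidesteps this entirely by citing the black-box embedding result.
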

\begin{proof}
Part~(2) follows from part~(1) and \cite[Proposition~2.19]{Fio1}. % it's rather the same proof as Proposition~2.19, not exactly the result as stated
Thus, we only prove part~(1).

By \cite[Theorem~5.1]{CDH} and \cite[Section~3]{Fio1}, the set $\mscr{H}(\wh X)$ can be equipped with a measure $\mu$ such that $d(x,y)=\mu(\mscr{H}(x|y))$ for all $x,y\in X$. By Lemma~\ref{lineal lem} and Remark~\ref{halfspaces of subsets}, the subset $\mscr{H}_X(\wh X)$ is partitioned into two ultrafilters. In the terminology of \cite[p.\ 1349]{Fio1}, the elements of $\mscr{H}(\wh X)\setminus\mscr{H}_X(\wh X)$ are \emph{non-thick}, hence $\mscr{H}(\wh X)\setminus\mscr{H}_X(\wh X)$ has measure zero by \cite[Corollary~3.7]{Fio1}. 

The fact that $\wh X$ is lineal can now be deduced by retracing the proof of Lemma~\ref{lineal lem} above and observing that, invoking \cite[Corollary~3.11(1)]{Fio1}, it yields the following stronger result: \emph{The median space $X$ is lineal if and only if there is a partition $\mscr{H}(M)=\s\sqcup\s^*\sqcup\mc{K}$, where $\s$ and $\s^*$ are ultrafilters and $\mu(\mc{K})=0$}.
\end{proof}

\begin{lem}\label{isometries of intervals}
If $X$ is a lineal, finite-rank median space, then $\isom X$ is virtually (locally finite)--by--abelian.
\end{lem}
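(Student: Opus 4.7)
The plan is to use the lineal structure to map a finite-index subgroup of $\isom X$ to an abelian group of signed translation lengths in the various asymptotic directions of $X$, and to show that the kernel consists of elliptic isometries forming a locally finite group.

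First, by Remark~\ref{finitely many partitions} and Lemma~\ref{lineal lem}, there are only finitely many pairs of endpoints $\xi^\pm$ in the zero-completion $\overline{X}$ with $X \subseteq I(\xi^-,\xi^+)$. Since $\isom X$ permutes these pairs, a finite-index subgroup $H \leq \isom X$ fixes each such pair pointwise. Because every halfspace of $X$ contains exactly one of $\xi^-$ or $\xi^+$, the action $H \acts X$ has no wall inversions, so every $g \in H$ is semisimple by Corollary~\ref{semisimple cor general}.

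Second, let $\mu$ denote the $\isom X$-invariant measure on $\mscr{W}(X)$ representing the metric via $d(x,y)=\mu(\mscr{W}(x|y))$ (cf.\ \cite[Theorem~5.1]{CDH}). For each $H$-invariant ultrafilter $\s=\{\mf{h}\in\mscr{H}(X):\xi^+\in\mf{h}\}$ arising from a fixed endpoint pair, and for a fixed basepoint $x_0\in X$, define
\[
\phi_\s(g) := \mu\bigl(\mscr{H}(x_0|gx_0)\cap\s\bigr) - \mu\bigl(\mscr{H}(gx_0|x_0)\cap\s\bigr).
\]
Using $g$-invariance of $\mu$ and $\s$, together with additivity of signed wall-measures along triples of points, one verifies that $\phi_\s$ is a well-defined homomorphism $H\to\R$ independent of $x_0$. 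Taking the product over all the finitely many fixed ultrafilters yields a homomorphism $\Phi$ from $H$ to a finite-dimensional real vector space, whose image is abelian.

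Third, I would show that the kernel $K := \ker\Phi$ consists of elliptic isometries. If $g\in H$ were loxodromic, then embedding $X$ isometrically in $(\R^r,d_{\ell^1})$ via Lemma~\ref{lineal completion lem}(2), the asymptotic translation vector of $g$ in $\R^r$ would project nontrivially onto at least one of the signed asymptotic directions recorded by $\Phi$, yielding $\phi_\s(g)\neq 0$ for some $\s$. Hence every element of $K$ is elliptic and fixes a point of $X$.

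Finally, to conclude that $K$ is locally finite, take any finitely generated subgroup $K_0 \leq K$. Since each generator has bounded orbits, so does $K_0$; a standard Helly/circumcenter argument in the complete finite-rank median space $\wh X$ then yields a common fixed point $p \in \wh X$ for $K_0$. The pointwise stabilizer of $p$ in $\isom\wh X$ acts faithfully on the finite collection of local halfspaces near $p$ (which, via the embedding in $\R^r$, consists of boundedly many halfspaces meeting a small neighbourhood of $p$), and hence embeds into a finite symmetric group. Thus $K_0$ is finite. The main technical obstacle lies in this last step: rigorously controlling the local structure at a fixed point so that the stabilizer embeds into a finite group, which requires exploiting the finite rank of $X$ together with the embedding into $(\R^r,d_{\ell^1})$ to show that an isometry fixing $p$ is determined by its action on the finitely many local halfspaces.
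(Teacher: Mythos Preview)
Your overall strategy matches the paper's: pass to a finite-index subgroup fixing the endpoints, build a Busemann-type homomorphism to an abelian group, and show the kernel is locally finite by finding fixed points and analysing point stabilisers. Two steps in your last paragraph contain genuine gaps, however. The implication ``each generator has bounded orbits, so does $K_0$'' is false in general (two reflections of $\R$ at distinct points generate an infinite dihedral group with unbounded orbits). The correct route, once you know every element of $K_0$ is elliptic, is to observe that $\mc{H}_1(K_0)=\emptyset$ via part~(1) of Lemma~\ref{H_0 perp H_1} and then invoke Proposition~\ref{finite orbit new prop} to obtain the common fixed point. The paper sidesteps this entirely by citing \cite[Theorem~F]{Fio2}, which delivers both the homomorphism $\chi\colon G\to\R^r$ and the statement that every finitely generated subgroup of $\ker\chi$ already fixes a point of $X$.

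More seriously, the claim that the stabiliser of $p$ ``acts faithfully on the finite collection of local halfspaces near $p$'' has no clear meaning and cannot be made to work as stated: any neighbourhood of $p$ meets uncountably many halfspaces, and an isometry of $X$ fixing $p$ need not extend to an isometry of the ambient $\R^r$, so the embedding from Lemma~\ref{lineal completion lem} gives no direct control here. The paper's argument is different and yields only that the stabiliser of $p$ together with a fixed boundary point $\xi$ is \emph{locally finite}. The key observation is that, for each $t>0$, the set $\mc{H}(t)\subset\mscr{H}(p|\xi)$ of halfspaces at distance exactly $t$ from $p$ satisfies $\#\mc{H}(t)\leq 2r^2$: chains in $\mc{H}(t)$ have length $\leq 2r$ by \cite[Proposition~B]{Fio1}, antichains have length $\leq r$ by the rank bound, and Dilworth's lemma finishes. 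The stabiliser then embeds in $\prod_{t>0}\mathrm{Sym}(\mc{H}(t))$, a product of groups of uniformly bounded order, which is locally finite. Since your $K_0$ is finitely generated, this is exactly what is needed.
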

\begin{proof}
If $\wh X$ is the metric completion of $X$, we have an embedding $\isom X\hookrightarrow\isom\wh X$ and $\wh X$ is again lineal by part~(1) of Lemma~\ref{lineal completion lem}. Thus, it is not restrictive to assume that $X$ is complete.

We begin with the following special case. Set $r=\rk X$.

\smallskip
{\bf Claim:} \emph{if $H\leq\isom X$ fixes points $x\in X$ and $\xi\in\overline X$, then $H$ is locally finite.}

\smallskip \noindent
\emph{Proof of Claim.} Denote by $\mc{H}(t)\cu\mscr{H}(x|\xi)\cap\mscr{H}(X)$ the set of halfspaces at distance exactly $t>0$ from $x$. By \cite[Proposition~B]{Fio1}, any chain in $\mc{H}(t)$ has length $\leq 2r$, while any anti-chain has length $\leq r$ by definition of rank. Dilworth's lemma then yields $\#\mc{H}(t)\leq 2r^2$. 

The action $H\acts\mscr{H}(X)$ preserves the set $\mc{H}(t)$ for each $t>0$. This yields a homomorphism $\iota\colon H\ra\prod_{t>0}{\rm Sym}(\mc{H}(t))$. Each element in $\ker\iota$ preserves every halfspace of $X$ at positive distance from $x$, thus fixing every point of $X$. We conclude that $\iota$ is injective.

The claim now follows from the (standard) observation that direct products of uniformly finite groups are locally finite. 
\hfill$\blacksquare$
%Now, for every finite subset $\{s_1,\dots,s_m\}\cu\prod_{t>0}{\rm Sym}(2r^2)$, we have a finite partition $(0,+\infty)=\bigsqcup_i\Om_i$ with $i\in{\rm Sym}(2r^2)^m$, where, for all $t\in\Om_i$, the projections of $s_1,\dots,s_m$ to the $t$--th factor of $\prod_{t>0}{\rm Sym}(2r^2)$ are exactly the $m$--tuple $i$. The subgroup $\langle s_1,\dots,s_m\rangle\leq\prod_{t>0}{\rm Sym}(2r^2)$ then projects injectively into a finite product of copies of ${\rm Sym}(2r^2)$. This shows that the group $\prod_{t>0}{\rm Sym}(2r^2)$, and therefore $\isom X$, is locally finite.

\smallskip
By Corollary~\ref{product cor}, there is a maximal product splitting $X=X_1\x\dots\x X_k$. The product of the groups $\isom X_i$ sits inside $\isom X$ as a finite-index subgroup. By Lemma~\ref{lineal lem}, each $X_i$ is again lineal. Thus, it is not restrictive to assume that $X$ is irreducible.

Now, the points $\xi,\eta\in\overline X$ such that $X\cu I(\xi,\eta)$ are unique by Remark~\ref{finitely many partitions}. An index--$2$ subgroup of $\isom X$ fixes $\xi$ and $\eta$. By \cite[Theorem~F]{Fio2}, there exist a finite-index subgroup $G\leq\isom X$ and a homomorphism $\chi\colon G\ra\R^r$ such that every finitely generated subgroup of $\ker\chi$ fixes a point of $X$, as well as $\xi$ and $\eta$. 

If $H\leq\ker\chi$ is finitely generated and $x\in X$ is a point fixed by it, we obtain a restriction homomorphism $\rho\colon H\ra\isom I(x,\xi)\x\isom I(x,\eta)$. By the Claim, the image of $\rho$ is finite. On the other hand, the kernel of $\rho$ preserves every halfspace of $I(x,\xi)\cup I(x,\eta)$, hence every halfspace of $X$. It follows that $\rho$ is injective and $H$ is finite. In conclusion, $\ker\chi$ is locally finite. 
\end{proof}

\subsection{Actions on lineal median spaces.}

\begin{defn}
Let $\mc{G}$ be the class of groups $G$ with the following property. If $X$ is a finite-rank median space and $G\acts X$ is an essential isometric action without wall inversions, then $X$ is lineal.
\end{defn}

\begin{rmk}\label{Z in G}
It follows from part~(3) of Lemma~\ref{ss lemma} that $\Z\in\mc{G}$.
\end{rmk}

\begin{rmk}\label{finite index supergroups}
If $H\leq G$ has finite index and $H\in\mc{G}$, then $G\in\mc{G}$. %The converse is not clear.
Indeed, if an action $G\acts X$ is essential and without wall inversions, then so is the restriction $H\acts X$. 
\end{rmk}

\begin{prop}\label{H and G/H}
Suppose that there exists a subgroup $H\lhd G$ such that $H$ and $G/H$ lie in $\mc{G}$. Suppose moreover that $H$ is finitely generated. Then $G\in\mc{G}$. 
\end{prop}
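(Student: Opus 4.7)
The plan is to split $X$ as a $G$-invariant product $X_0 \times X_1$ reflecting the dynamics of $H$, and show each factor is lineal using one of the two hypotheses.

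Since $G \acts X$ is essential, Corollary~\ref{essential vs minimal cor} gives that it is minimal. Because $H$ is finitely generated and $H \acts X$ inherits the absence of wall inversions from $G$, Theorem~\ref{abstract core}(2) yields a nonempty reduced convex core $\overline{\mc{C}}(H) \cu X$, which is $G$-invariant by Remark~\ref{core and commensurability} (since $H \lhd G$). Minimality forces $\overline{\mc{C}}(H) = X$, hence $\overline{\mc{H}}_{1/2}(H) = \emptyset$, and Lemma~\ref{G and h} reduces to a $G$-invariant partition $\mscr{H}(X) = \mc{H}_1(H) \sqcup \overline{\mc{H}}_0(H)$. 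The $H$-invariant compatible metric on $X$ lets us apply Lemma~\ref{H_0 perp H_1}, showing that these two families of halfspaces are mutually transverse, so Corollary~\ref{core splitting} applied to $H$ produces an isometric product splitting $X = X_0 \times X_1$ with $\mscr{H}(X_0) \leftrightarrow \overline{\mc{H}}_0(H)$ and $\mscr{H}(X_1) \leftrightarrow \mc{H}_1(H)$; the $G$-invariance of each piece of the partition makes this splitting $G$-invariant. By Lemma~\ref{lineal lem}, it suffices to show that each factor is lineal.

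The factor $X_1$ is handled immediately: every halfspace lies in $\mc{H}_1(H)$, so $H \acts X_1$ is essential and, by the uniqueness of the extension in Remark~\ref{halfspaces of subsets}, without wall inversions. Thus $H \in \mc{G}$ gives $X_1$ lineal.

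For $X_0$, every halfspace belongs to $\overline{\mc{H}}_0(H, X_0)$, so by Remark~\ref{factors through finite group} the restriction $H \acts X_0$ factors through a finite quotient whose kernel $H' \leq H$ is characteristic in $H$ and hence normal in $G$. The residual action $G/H' \acts X_0$ remains essential and without wall inversions (essentiality passes to each factor, since halfspaces of $X_0$ are $G$-invariantly identified with $\overline{\mc{H}}_0(H)$), and it fits into a short exact sequence $1 \to H/H' \to G/H' \to G/H \to 1$ with $H/H'$ finite. The plan to invoke $G/H \in \mc{G}$ is to produce a $G$-invariant convex subalgebra $Y \cu X_0$ on which $H$ acts trivially: Proposition~\ref{finite orbit new prop}(2) (applicable since $\mc{H}_1(H, X_0) = \emptyset$ and there are no wall inversions on $X_0$) supplies an $H$-fixed point $p \in X_0$, and normality of $H$ forces the entire orbit $G \cdot p$ to be pointwise $H$-fixed. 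Enlarging $G \cdot p$ to a $G$-invariant convex $Y$ on which $H$ still acts trivially then makes $G/H \acts Y$ essential and without wall inversions, so $G/H \in \mc{G}$ yields $Y$ lineal, from which one transfers lineality back to $X_0$.

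The main obstacle will be this last step for $X_0$. The median-closure of $G \cdot p$ is $G$-invariant and $H$-pointwise-fixed but may fail to be convex, whereas the convex hull of $G \cdot p$ is $G$-invariant and convex but can fail to be $H$-pointwise fixed. Navigating this tension---likely by carefully iterating the finite-orbit structure of $H \acts X_0$, whose orbits of halfspaces are pairwise transverse, to refine $Y$ so that enough halfspaces of $X_0$ are detected by $Y$---is where the technical difficulty concentrates.
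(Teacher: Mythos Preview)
Your setup and the treatment of the factor $X_1$ are correct and match the paper's proof essentially verbatim. The gap is in your handling of $X_0$: you have already won, but you do not notice it.

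You correctly observe that $H\acts X_0$ factors through the finite group $H/H'$ with $H'$ characteristic in $H$, hence $H'\lhd G$, and you even write down the short exact sequence $1\to H/H'\to G/H'\to G/H\to 1$ with $H/H'$ finite. This means $G/H$ has finite index in $G/H'$, so Remark~\ref{finite index supergroups} immediately gives $G/H'\in\mc{G}$. Since the action $G/H'\acts X_0$ is essential and without wall inversions (both inherited from $G\acts X_0$, which in turn inherits them from $G\acts X$ via the $G$--invariant product splitting), you conclude at once that $X_0$ is lineal. This is exactly what the paper does.

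Your entire ``main obstacle'' paragraph is therefore unnecessary. You do not need to find a convex $Y\cu X_0$ on which $H$ (as opposed to $H'$) acts trivially, and the difficulties you anticipate about median closures versus convex hulls never arise. The point of Remark~\ref{finite index supergroups} is precisely to absorb the finite discrepancy between $H'$ and $H$.
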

\begin{proof}
Let $X$ be a finite-rank median space, and let $G\acts X$ be an essential isometric action without wall inversions. Since $H$ is finitely generated, the reduced core $\overline{\mc{C}}(H)$ is nonempty by Theorem~\ref{abstract core}. Since $H$ is normal, Remark~\ref{core and commensurability} shows that $\overline{\mc{C}}(H)$ is $G$--invariant. Part~(1) of Lemma~\ref{essential vs minimal cor} then guarantees that $\overline{\mc{C}}(H)=M$. By Corollary~\ref{core splitting}, we have a product splitting
\[M=\overline{\mc{C}}_0(H)\x\mc{C}_1(H).\]
Note that Remark~\ref{core and commensurability} also guarantees that the sets $\overline{\mc{H}}_0(H)$ and $\mc{H}_1(H)$ are $G$--invariant. So, the factors in the above splitting of $M$ are preserved by $G$. 

Since $H\in\mc{G}$, the space $\mc{C}_1(H)$ is lineal (possibly a single point). By Lemma~\ref{lineal lem}, it suffices to show that $\overline{\mc{C}}_0(H)$ is lineal. By Remark~\ref{factors through finite group}, a finite-index characteristic subgroup $H_0\lhd H$ fixes $\overline{\mc{C}}_0(H)$ pointwise. Thus, the essential action $G\acts\overline{\mc{C}}_0(H)$ factors through an essential action of $G/H_0$. Since $G/H\in\mc{G}$ has finite index in $G/H_0$, Remark~\ref{finite index supergroups} shows that $G/H_0\in\mc{G}$. Hence $\overline{\mc{C}}_0(H)$ is lineal.
\end{proof}

Recall that a group is \emph{polycyclic} if it is solvable and all its subgroups are finitely generated. Proposition~\ref{H and G/H} and Remarks~\ref{Z in G} and~\ref{finite index supergroups} imply the following.

\begin{cor}\label{polycyclic in G}
All virtually polycyclic groups lie in $\mc{G}$.
\end{cor}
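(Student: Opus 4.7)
The plan is to combine the three preceding results via a short induction on the length of a subnormal series with cyclic factors. By Remark~\ref{finite index supergroups}, it suffices to show that every polycyclic group $G$ lies in $\mc{G}$. By definition, $G$ admits a subnormal series
\[1 = G_0 \lhd G_1 \lhd \dots \lhd G_n = G\]
whose successive quotients $G_{i+1}/G_i$ are all cyclic (each either infinite or finite). I will induct on the length $n$.

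The base case $n=0$ is the trivial group. Any essential action of $\{1\}$ on a finite-rank median space $X$ forces $\mscr{H}(X) = \emptyset$: essentiality requires each halfspace $\mf{h}$ to strictly contain some translate of itself, which is impossible under the trivial action. Hence $X$ consists of a single point, which is trivially lineal (the set $\mscr{H}(X)$ being empty, it is partitioned into two empty ultrafilters, and we may apply Lemma~\ref{lineal lem}).

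For the inductive step, suppose $G_{n-1} \in \mc{G}$. If $G/G_{n-1}$ is finite, then $G_{n-1}$ has finite index in $G$ and Remark~\ref{finite index supergroups} gives $G \in \mc{G}$. Otherwise $G/G_{n-1} \cong \Z$, which lies in $\mc{G}$ by Remark~\ref{Z in G}; since $G_{n-1}$ is polycyclic, it is finitely generated, so Proposition~\ref{H and G/H} applied to the normal subgroup $G_{n-1} \lhd G$ yields $G \in \mc{G}$. This closes the induction, and a final application of Remark~\ref{finite index supergroups} transports the conclusion from polycyclic to virtually polycyclic groups.

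No substantive obstacle is expected here: the corollary is a bookkeeping exercise assembling Proposition~\ref{H and G/H} (which supplied the crucial core-splitting mechanism via $\overline{\mc{C}}(H) = \overline{\mc{C}}_0(H) \x \mc{C}_1(H)$) together with Lemma~\ref{ss lemma} (which delivered $\Z \in \mc{G}$). The only point to be careful about is handling both kinds of cyclic factors uniformly, which is why I split the inductive step into the "finite quotient" case (absorbed by Remark~\ref{finite index supergroups}) and the "$\Z$-quotient" case (handled by Proposition~\ref{H and G/H}).
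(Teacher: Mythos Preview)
Your proof is correct and follows exactly the approach the paper intends: the paper simply states that the corollary follows from Proposition~\ref{H and G/H} together with Remarks~\ref{Z in G} and~\ref{finite index supergroups}, and you have spelled out the evident induction on the length of a subnormal series with cyclic factors that makes this precise. The case split in your inductive step (finite versus infinite cyclic quotient) is the natural way to organise the argument, and your handling of the trivial base case is fine.
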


\begin{rmk}\label{lineal actions of polycyclic groups}
Let $X$ be a lineal, finite-rank median space. If $G$ is virtually polycyclic, then every isometric action $G\acts X$ factors through a virtually abelian group. This follows from Lemma~\ref{isometries of intervals} and the fact that polycyclic groups are virtually torsion-free \cite[Theorem~3.21]{Hirsch}.
\end{rmk}

We are now ready to prove Corollary~\ref{corintro polycyclic} from the introduction.

\begin{proof}[Proof of Corollary~\ref{corintro polycyclic}]
By Theorem~\ref{abstract core}, the core $\mc{C}(H)$ is nonempty. By Corollary~\ref{core splitting}, we have a splitting $\mc{C}(H)=\mc{C}_0(H)\x\mc{C}_1(H)$. By Remark~\ref{core and commensurability}, the $G$--action leaves $\mc{C}(H)$ invariant and preserves the product splitting. Since $G\acts X$ is minimal, we have $\mc{C}(H)=X$ and we can set $P=\mc{C}_0(H)$ and $F=\mc{C}_1(H)$. By part~(2) of Proposition~\ref{finite orbit new prop}, the action $H\acts P$ fixes a point. The action $H\acts F$ is essential, so $F$ is lineal by Corollary~\ref{polycyclic in G}. By Remark~\ref{lineal actions of polycyclic groups}, the action $H\acts F$ factors through a virtually abelian group. By part~(2) of Lemma~\ref{lineal completion lem}, the space $F$ isometrically embeds in $\R^r$.
\end{proof}

\begin{proof}[Proof of Corollary~\ref{flattorus cor}]
By Remark~\ref{connected inversions}, the action $A\acts X$ has no wall inversions. By Proposition~\ref{H=H_1} and part~(1) of Lemma~\ref{essential vs minimal cor}, there exists an $A$--invariant, proper, convex subset $C\cu X$ such that the action $A\acts C$ is minimal and without wall inversions.

Corollary~\ref{corintro polycyclic} now yields a splitting $C=F\x P$, where $A\acts P$ fixes a point and $F$ isometrically embeds into $\R^n$. Since $A\acts C$ is minimal, $P$ must be a singleton, concluding the proof.
\end{proof}

\bibliography{../mybib}
\bibliographystyle{alpha}

\end{document}